\newcommand{\rrvert}{\vert}
\newcommand{\rrVert}{\Vert}
\newcommand{\llvert}{\vert}
\newcommand{\llVert}{\Vert}
\numberwithin{equation}{section}
\numberwithin{figure}{section}
\theoremstyle{plain}
\newtheorem{thm}{\protect\theoremname}[section]
  \theoremstyle{plain}
  \newtheorem{prop}[thm]{\protect\propositionname}
  \theoremstyle{plain}
  \newtheorem{conj}[thm]{Conjecture}
  \theoremstyle{definition}
  \newtheorem{defn}[thm]{\protect\definitionname}
  \theoremstyle{plain}
  \newtheorem{lem}[thm]{\protect\lemmaname}
  \theoremstyle{remark}
  \newtheorem{rem}[thm]{\protect\remarkname}
  \theoremstyle{plain}
  \newtheorem{cor}[thm]{\protect\corollaryname}
  \theoremstyle{definition}
  \newtheorem{example}[thm]{Example}
  \providecommand{\corollaryname}{Corollary}
  \providecommand{\definitionname}{Definition}
  \providecommand{\lemmaname}{Lemma}
  \providecommand{\propositionname}{Proposition}
  \providecommand{\remarkname}{Remark}
\providecommand{\theoremname}{Theorem}
\begin{document}

\title[noncommutative maximal ergodic inequalities]{noncommutative maximal ergodic inequalities associated with doubling
conditions}

\date{\today}

\author{Guixiang Hong, Ben Liao and Simeng Wang}

\address{School of Mathematics and Statistics, Wuhan University, 430072 Wuhan,
	China and Hubei Key Laboratory of Computational Science, Wuhan University, 430072 Wuhan, China.}

\email{guixiang.hong@whu.edu.cn}

\address{Department of Mathematics,
	Texas A\&M University,
	College Station, Texas 77843-3368,
	USA}
\curraddr{Tencent Quantum Laboratory, Shenzhen, China.}

\email{liao@hotmail.co.uk}

\address{Universität des Saarlandes, Fachrichtung Mathematik, Postfach 151150,
66041 Saarbrücken, Germany}
\curraddr{Université Paris-Saclay, CNRS, Laboratoire de mathématiques d?Orsay, 91405, Orsay, France.}

\email{simeng.wang@universite-paris-saclay.fr}

\keywords{Maximal ergodic theorems, individual ergodic theorems, noncommutative
$L_{p}$-spaces, Hardy-Littlewood maximal inequalities, transference
principles. }
\begin{abstract}
This paper is devoted to the study of noncommutative maximal inequalities
and ergodic theorems for group actions on von Neumann algebras. Consider
a locally compact group $G$ of polynomial growth with a symmetric compact
subset~$V$. Let $\alpha $ be a continuous action of $G$ on a von Neumann
algebra $\mathcal{M}$ by trace-preserving automorphisms. We then show that
the operators defined by
\begin{equation*}
A_{n}x=
\frac{1}{m(V^{n})}
\int _{V^{n}}\alpha _{g}x\,dm(g),\quad x\in
L_{p}( \mathcal{M}),n\in \mathbb{N},1\leq p\leq \infty ,
\end{equation*}
are of weak type $(1,1)$ and of strong type $(p,p)$ for
$1 < p<\infty $. Consequently, the sequence $(A_{n}x)_{n\geq 1}$ converges
almost uniformly for $x\in L_{p}(\mathcal{M})$ for $1\leq p<\infty $. Also,
we establish the noncommutative maximal and individual ergodic theorems
associated with more general doubling conditions, and we prove the corresponding
results for general actions on one fixed noncommutative $L_{p}$-space which
are beyond the class of Dunford--Schwartz operators considered previously
by Junge and Xu. As key ingredients, we also obtain the Hardy--Littlewood
maximal inequality on metric spaces with doubling measures in the operator-valued
setting. After the groundbreaking work of Junge and Xu on the noncommutative
Dunford--Schwartz maximal ergodic inequalities, this is the first time
that more general maximal inequalities are proved beyond Junge and Xu's
setting. Our approach is based on quantum probabilistic methods as well
as random walk theory.
\end{abstract}

\maketitle

\section{Introduction}
\label{sec1}

This article studies maximal inequalities
and ergodic theorems for group actions on noncommutative $L_{p}$-spaces.
The connection between ergodic theory and von Neumann algebras goes back
to the very beginning of the theory of operator algebras. However, the
study of individual ergodic theorems in the noncommutative setting only
took off with Lance's pioneering work \cite{lance76erg} in 1976. The topic
was then extensively investigated in a series of works of Conze, Dang-Ngoc,
K\"{u}mmerer, Yeadon, and others (see
\cite{conzedangngoc78erg,kummerer78erg,yeadon77max,jajte85nclimitbook}
and references therein). Among them, Yeadon \cite{yeadon77max} obtained
a maximal ergodic theorem in the preduals of semifinite von Neumann algebras.
But the corresponding maximal inequalities in $L_{p}$-spaces remained open
until the celebrated work of Junge and Xu \cite{jungexu07erg}, which established
the noncommutative analogue of the Dunford--Schwartz maximal ergodic theorem.
This breakthrough has motivated further research on noncommutative ergodic
theorems, such as \cite{anantharaman06freeerg,bekjan08ergpositive,hongsun16,hu08freeerg}, and
\cite{litvinov14wienerwintner}. Note that all these works essentially remain
in the class of Dunford--Schwartz operators, that is, do not go beyond
Junge and Xu's setting.

On the other hand, in classical ergodic theory, a number of significant
developments related to individual ergodic theorems for group actions have
been established in recent years. In particular, Breuillard
\cite{breuillard14polygrowth} and Tessera \cite{tessera07polygrowth} studied
the balls in groups of polynomial growth; they proved that for any invariant
metric quasi-isometric to a word metric (such as invariant Riemannian metrics
on connected nilpotent Lie groups), the balls are asymptotically invariant
and satisfy the doubling condition, and hence satisfy the individual ergodic
theorem. This settled a long-standing problem in ergodic theory since Calder\'{o}n's
classical paper \cite{calderon53erg} in 1953. Also, Lindenstrauss
\cite{lindenstrauss01ergamenable} proved the individual ergodic theorem
for tempered F{\o}lner sequences, which resolves the problem of constructing
pointwise ergodic sequences on an arbitrary amenable group. (We refer to
\cite{nevo06surveyerg} for more details.)

Thus, it is natural to extend Junge and Xu's work to actions of more general
amenable groups rather than the integer group. As in the classical case,
the first natural step would be to establish the maximal ergodic theorems
for doubling conditions. However, since we do not have an appropriate analogue
of covering lemmas in the noncommutative setting, no significant progress
has been made in this direction. In this paper we provide a new approach
to this problem. It is based on both classical and quantum probabilistic
methods, and allows us to go beyond the class of Dunford--Schwartz operators
considered by Junge and Xu.

Our main results establish the noncommutative maximal and individual ergodic
theorems for ball averages under the doubling condition. Let $G$ be a locally
compact group equipped with a right Haar measure~$m$. Recall that for an
invariant metric $d$ on $G$,\footnote{In this paper, we always assume that
	$d$ is a measurable function on $G\times G$ and $m$ is a Radon Borel measure
	with respect to $(G,d)$.} we say that $(G,d)$ satisfies the
\emph{doubling condition} if the balls
$B_{r}\coloneqq \{s\in G:d(s,e)\leq r\}$ satisfy
\begin{equation}
m(B_{2r})\leq Cm(B_{r}),\quad r>0, \label{eq:doubling def}
\end{equation}
where $C$ is a constant independent of~$r$. We say that the balls are
\emph{asymptotically invariant} under right translation if for every
$s\in G$,
\begin{equation}
\lim _{r\to \infty }
\frac{m((B_{r}s)\bigtriangleup B_{r})}{m(B_{r})}=0, \label{eq:asymp inv}
\end{equation}
where $\bigtriangleup $ denotes the usual symmetric difference of subsets.
To state the noncommutative ergodic theorems, we consider a von Neumann
algebra $\mathcal{M}$ equipped with a normal semifinite trace~$\tau$. We also consider an action $\alpha $ of $G$ on the associated
noncommutative $L_{p}$-spaces $L_{p}(\mathcal{M})$, under some mild assumptions
clarified in later sections. In particular, if $\alpha $ is a continuous
action of $G$ on $\mathcal{M}$ by $\tau $-preserving automorphisms of
$\mathcal{M}$, then $\alpha $ extends to isometric actions on the spaces
$L_{p}(\mathcal{M})$. The following is one of our main results.
\begin{thm}
	\label{thm:main intro}%
	Assume that $(G,d)$ satisfies \eqref{eq:doubling def} and
	\eqref{eq:asymp inv}. Let $\alpha $ be a continuous action of $G$ on
	$\mathcal{M}$ by $\tau $-preserving automorphisms. Let $A_{r}$ be the averaging
	operators
	\begin{equation*}
	A_{r}x=
	\frac{1}{m(B_{r})}
	\int _{B_{r}}\alpha _{s}x\,dm(s),\quad x\in
	\mathcal{M},r>0.
	\end{equation*}
	Then $(A_{r})_{r>0}$ is of weak type $(1,1)$ and of strong type
	$(p,p)$ for $1<p<\infty $. Moreover, for all $1\leq p<\infty $, the sequence
	$(A_{r}x)_{r>0}$ converges almost uniformly for
	$x\in L_{p}(\mathcal{M})$.
\end{thm}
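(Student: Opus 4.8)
The plan is to reduce the maximal inequality for the action averages $A_r$ to a purely spatial Hardy--Littlewood maximal inequality on $(G,d,m)$ with coefficients in $\mathcal M$, and then to prove the latter by comparing balls with the iterates of a symmetric random walk, for which the two endpoint estimates come from noncommutative probabilistic tools. First I would carry out the transference step: using the asymptotic invariance \eqref{eq:asymp inv}, a Calder\'on-type transference principle majorizes the relevant maximal norm of $\sup_r A_r x$ (in the $L_p(\mathcal M;\ell_\infty)$ norm for $1<p<\infty$, respectively the weak-$L_1$ maximal norm at $p=1$) by the norm of the ball-average maximal operator acting on $L_p(L_\infty(G)\bar\otimes\mathcal M)$. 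It then suffices to establish, in this operator-valued setting, that the Hardy--Littlewood averages $f\mapsto \tfrac{1}{m(B_r)}\int_{B_r} f(\,\cdot\, g)\,dm(g)$ on the doubling space $(G,d,m)$ are of weak type $(1,1)$ and of strong type $(p,p)$.

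Second, to prove this spatial inequality I would introduce a symmetric probability measure $\mu$ adapted to the metric (for instance a normalized average of ball indicators) and the associated convolution operator $Q$, tensored with the identity on $\mathcal M$. Then $Q$ is a positive, trace-preserving, unital, $L_2$-self-adjoint contraction, i.e. a symmetric Markov (Dunford--Schwartz) operator. The crucial input is analytic and belongs to random walk theory: sharp Gaussian lower bounds for the iterated kernels $\mu^{*n}$ on doubling groups of polynomial growth yield $\mu^{*n}(g)\ge c\,m(B_{\sqrt n})^{-1}$ for $g\in B_{\sqrt n}$, whence the pointwise domination $\tfrac{1}{m(B_{\sqrt n})}\mathbf 1_{B_{\sqrt n}}\le c^{-1}\mu^{*n}$ and therefore $A_{\sqrt n}x\le C\,Q^{n}x$ for positive $x$. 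Invoking the doubling condition \eqref{eq:doubling def} to compare intermediate and consecutive scales, the whole continuous family of ball averages is then dominated by $\sup_n Q^n$.

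Third, I would establish the maximal inequalities for the iterates $(Q^n)$ by quantum-probabilistic methods. For $1<p<\infty$ the strong $(p,p)$ bound is exactly the noncommutative Stein maximal theorem of Junge--Xu for symmetric Markov operators. The weak $(1,1)$ endpoint is the crux of the whole argument, and precisely the point at which the lack of a noncommutative covering lemma blocks the classical route; I would circumvent this via a Rota-type dilation, representing the even iterates $Q^{2n}$ as a reversed sequence of conditional expectations in a larger tracial probability space, so that $\sup_n Q^{2n}$ is controlled by a reversed-martingale maximal function, to which the noncommutative Doob inequality in its weak-$(1,1)$ form applies. The odd iterates and the passage to the supremum over continuous $r$ are then absorbed using positivity of $Q$ together with the doubling property.

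Finally, the almost uniform convergence follows from the maximal inequalities by the standard two-step scheme. The mean ergodic theorem holds here because \eqref{eq:asymp inv} forces $A_r$ to kill coboundaries in the limit, so $A_r x$ converges in $L_p$ on the dense subspace spanned by the $\alpha$-invariant elements and the differences $x-\alpha_g x$; combining this $L_p$-convergence on a dense set with the weak $(1,1)$ and strong $(p,p)$ maximal inequalities, the noncommutative Banach principle upgrades convergence to almost uniform convergence on all of $L_p(\mathcal M)$ for $1\le p<\infty$. I expect the decisive difficulties to be the sharp heat-kernel comparison in the second step and, above all, the weak-$(1,1)$ estimate in the third step, where the probabilistic dilation must be engineered to replace the missing covering argument.
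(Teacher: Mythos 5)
Your first step (transference to translation averages on $L_p(G;L_p(\mathcal M))$, Theorems \ref{thm:transference} and \ref{thm:transference weak}) and your last step (a.u.\ convergence on a dense subspace plus the noncommutative Banach principle, as in Section \ref{sect:individual}) are exactly the paper's route. The middle of your argument, however, contains a genuine gap, and it sits precisely at the point you yourself flag as the crux: the weak $(1,1)$ endpoint. Dominating the ball averages by \emph{individual} powers $Q^{n}$ and then invoking a Rota dilation cannot produce a weak $(1,1)$ bound. Even in the commutative case, $\sup_n Q^n$ for a general symmetric Markov operator is \emph{not} of weak type $(1,1)$ (Ornstein's classical counterexample shows $Q^n f$ can fail to converge a.e.\ for $f\in L_1$; Stein's $p>1$ restriction is sharp), so no soft dilation argument can yield this estimate --- any correct proof must re-use the doubling structure. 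Concretely, the dilation writes $Q^{2n}=\mathbb{E}\circ\widehat{\mathbb{E}}_n\circ\pi$, and Cuculescu's theorem (applied after reversing the finite decreasing filtration, which is legitimate) gives you a projection controlling $(\widehat{\mathbb{E}}_n\pi x)_n$ --- but that projection lives in the \emph{path-space} algebra, and there is no way to push it through the outer conditional expectation $\mathbb{E}$ down to the base algebra: weak type $(1,1)$ is not stable under post-composition with a conditional expectation (conditional expectations are not bounded on $L_{1,\infty}$). This is exactly where the missing covering lemma resurfaces. The paper's own random-walk argument (Proposition \ref{prop:walk domination} and Lemma \ref{lem:walk}) sidesteps this by summing the Gaussian lower bound over the times $k\in[(n+1)^2,2n^2]$, so that the ball average is dominated by the \emph{Ces\`aro} average $\frac{c}{n^2}\sum_{k=1}^{2n^2}Q^k$ rather than by a single power; the weak $(1,1)$ then follows from Yeadon's maximal ergodic theorem for the Dunford--Schwartz operator $Q$ (Corollary \ref{cor:weakonepoly}), with no dilation needed. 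Replacing your single-power domination by this Ces\`aro domination repairs your endpoint estimate.

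There is a second, scope-related gap: the Hebisch--Saloff-Coste lower bound is a statement about \emph{word} metrics on compactly generated groups of polynomial growth, whereas Theorem \ref{thm:main intro} assumes only an invariant measurable metric $d$ satisfying \eqref{eq:doubling def} and \eqref{eq:asymp inv}, whose balls need not be comparable to word balls (indeed $d$ is not even assumed compatible with the topology of $G$). So the random-walk route, even once repaired, proves Theorem \ref{thm:main poly} but not Theorem \ref{thm:main intro} in its stated generality. For the general case the paper proves the operator-valued Hardy--Littlewood inequality on an \emph{arbitrary} doubling metric measure space (Theorem \ref{thm:hl max }) by a different mechanism: Hyt\"onen--Kairema's finitely many adjacent dyadic systems (Lemma \ref{lem:existence non random partition}) show that every ball is contained in a dyadic cube of comparable measure belonging to one of $N$ increasing filtrations, so the maximal operator is dominated by $N$ genuine (forward) martingale maximal functions, and Cuculescu's weak $(1,1)$ together with Junge's Doob inequality (Lemma \ref{lem:doob}) conclude. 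A final minor point: in your last step, the Banach principle requires a.u.\ (not merely $L_p$) convergence on the dense subspace; this is supplied by the uniform estimate $\|A_r(y-\alpha_{g_0}y)\|_\infty\to 0$ for $y\in L_1(\mathcal M)\cap\mathcal M$, which follows from \eqref{eq:asymp inv}.
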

Here we refer to Section~\ref{sect:max ineq} for the notion of weak and
strong type $(p,p)$ inequalities in the noncommutative setting. Also, the
notion of almost uniform convergence is a noncommutative analogue of the
notion of almost everywhere convergence. We refer to Definition~\ref{defn:au convergence} for the relevant definitions.

There exist a number of examples satisfying assumptions
\eqref{eq:doubling def} and \eqref{eq:asymp inv} of the above theorem,
for which we refer to the previously mentioned works
\cite{breuillard14polygrowth,nevo06surveyerg}, and
\cite{tessera07polygrowth}. In particular, if we take $G$ to be the integer
group $\mathbb{Z}$ and $d$ to be the usual word metric, then we recover
the usual ergodic average
$A_{n}=\frac{1}{2n+1}\sum _{k=-n}^{n}T^{k}$ for an invertible operator
$T$, as is treated in \cite{jungexu07erg}. More generally, we may consider
groups of polynomial growth.
\begin{thm}%
	\label{thm:main poly}
	Assume that $G$ is generated by a symmetric compact subset $V$ and is of
	polynomial growth.
	\begin{enumerate}
		\item[(1)] Fix $1<p<\infty $. Let $\alpha $ be a strongly continuous and
		uniformly bounded action of $G$ on $L_{p}(\mathcal{M})$ such that
		$\alpha _{s}$ is a positive map for each $s\in G$. Then the operators defined
		by
		\begin{equation*}
		A_{n}x=
		\frac{1}{m(V^{n})}
		\int _{V^{n}}\alpha _{s}x\,dm(s),\quad x\in
		L_{p}( \mathcal{M}),n\in \mathbb{N}
		\end{equation*}
		are of strong type $(p,p)$. The sequence $(A_{n}x)_{n\geq 1}$ converges
		bilaterally almost uniformly for $x\in L_{p}(\mathcal{M})$.
		\item[(2)] Let $\alpha $ be a strongly continuous action of $G$ on
		$\mathcal{M}$ by $\tau $-preserving automorphisms. Then the operators defined
		by
		\begin{equation*}
		A_{n}x=
		\frac{1}{m(V^{n})}
		\int _{V^{n}}\alpha _{s}x\,dm(s),\quad x\in
		\mathcal{M},n \in \mathbb{N}
		\end{equation*}
		are of weak type $(1,1)$ and of strong type $(p,p)$ for all
		$1 < p<\infty $. The sequence $(A_{n}x)_{n\geq 1}$ converges almost uniformly
		for $x\in L_{p}(\mathcal{M})$ for all $1\leq p<\infty $.
	\end{enumerate}
\end{thm}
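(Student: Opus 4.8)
The plan is to deduce Part (2) from Theorem \ref{thm:main intro}, and to treat Part (1) by transference from the operator-valued Hardy--Littlewood maximal inequality. For Part (2), I would equip $G$ with a left-invariant metric $d$ quasi-isometric to the word metric attached to $V$, as furnished by the work of Breuillard \cite{breuillard14polygrowth} and Tessera \cite{tessera07polygrowth}; for such a metric the balls satisfy both the doubling condition \eqref{eq:doubling def} and the asymptotic invariance \eqref{eq:asymp inv}. Since $\alpha$ acts by $\tau$-preserving automorphisms, Theorem \ref{thm:main intro} applies and gives weak type $(1,1)$, strong type $(p,p)$ for $1<p<\infty$, and almost uniform convergence of $(A_r x)_{r>0}$. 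Because $d$ is quasi-isometric to the word metric, the balls $B_r$ and the sets $V^n$ are comparable in measure for comparable $r$ and $n$; positivity of $\alpha_g$ then makes the two families of averages mutually dominated at the level of the noncommutative maximal norm, so the weak/strong type bounds and the almost uniform convergence pass from $(A_r)_{r>0}$ to $(A_n)_{n\ge1}$.

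Part (1) falls outside the scope of Theorem \ref{thm:main intro}, since $\alpha$ is only assumed positive and uniformly bounded on a single space $L_p(\mathcal M)$ and need not arise from automorphisms. Here I would argue by transference. As $(G,d,m)$ is a doubling metric measure space, the operator-valued Hardy--Littlewood maximal inequality for doubling spaces, one of the key ingredients developed in this paper, provides a strong type $(p,p)$ maximal bound for the convolution operators $M_kf=\frac{1}{m(V^k)}\,\mathbf 1_{V^k}*f$ acting on the $G$-variable of $L_p\big(L_\infty(G)\,\overline{\otimes}\,\mathcal M\big)$. I would then run Calder\'on's transference in the noncommutative setting: for $x\in L_p(\mathcal M)$ and a large integer $N$, apply the inequality to the function $g\mapsto\alpha_g x$ restricted to $V^N$, on which $M_k$ reproduces $A_k x$ up to boundary terms supported on the shells $V^{N}\setminus V^{N-k}$. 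The positivity of each $\alpha_g$ is what allows the noncommutative maximal norm to be transferred through this embedding, uniform boundedness controls the $L_p$-norms, and the polynomial growth of $G$, giving $m(V^{N-k})/m(V^N)\to1$ for fixed $k$, kills the boundary contributions as $N\to\infty$. This yields the strong type $(p,p)$ inequality for $(A_n)_{n\ge1}$.

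For the bilaterally almost uniform convergence in Part (1), I would combine this maximal inequality with convergence on a dense subspace. As $1<p<\infty$, the space $L_p(\mathcal M)$ is reflexive, so the uniform boundedness of $\alpha$ together with the asymptotic invariance of the sets $V^n$ yields, via the mean ergodic theorem, the splitting $L_p(\mathcal M)=F\oplus\overline N$, where $F$ is the fixed-point space of $\alpha$ and $N$ is the span of the coboundaries $\{x-\alpha_h x:x\in L_p(\mathcal M),\ h\in G\}$. On $F$ one has $A_n x=x$, while for a coboundary $y=x-\alpha_h x$ the difference $A_n y$ is a normalized integral over the thin shell $V^n\bigtriangleup V^n h$, which converges to zero bilaterally almost uniformly by \eqref{eq:asymp inv} and the maximal bound. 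Thus $(A_n)$ converges bilaterally almost uniformly on the dense subspace $F+N$, and the maximal inequality upgrades this to bilateral almost uniform convergence on all of $L_p(\mathcal M)$ through the noncommutative Banach principle.

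The principal difficulty is the transference step in Part (1). Pisier's maximal norm on $L_p(\mathcal M;\ell_\infty)$ is defined through an operator factorization rather than a genuine pointwise supremum, so one must verify that mere positivity of the maps $\alpha_g$---with no multiplicativity and no invariance of the trace---still lets the maximal norm pass through the embedding $x\mapsto(g\mapsto\alpha_g x)$ into $L_p\big(L_\infty(G)\,\overline{\otimes}\,\mathcal M\big)$. This is exactly where working with a general positive action, rather than with Dunford--Schwartz operators, demands new input beyond the Junge--Xu framework. By contrast, the volume doubling and F\o lner estimates underlying both parts are routine consequences of \cite{breuillard14polygrowth,tessera07polygrowth}.
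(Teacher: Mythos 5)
Your treatment of the maximal inequalities matches the paper's: part (1)'s strong type $(p,p)$ is obtained exactly as in Theorem \ref{thm:max doubling} (transference as in Theorem \ref{thm:transference} combined with the operator-valued Hardy--Littlewood inequality of Theorem \ref{thm:hl max }), and part (2) follows from Theorem \ref{thm:main intro} once one uses the word metric, whose balls are literally the sets $V^{n}$ (Example \ref{ex:max doubling}(1)). One caveat on part (2): you should take the word metric itself rather than a metric merely quasi-isometric to it, because almost uniform convergence, unlike maximal bounds, does not pass through two-sided domination whose constants stay bounded away from $1$; with the word metric no transfer between $(A_{r})_{r>0}$ and $(A_{n})_{n\geq1}$ is needed at all. (The paper also gives an alternative, more general route to the weak type $(1,1)$ via the random-walk domination of Proposition \ref{prop:walk domination} and Corollary \ref{cor:weakonepoly}.)

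The genuine gap is the bilateral almost uniform convergence in part (1). For a coboundary $y=x-\alpha_{h}x$ with $x\in L_{p}(\mathcal{M})$ only, the asymptotic invariance of the sets $V^{n}$ gives
\[
\|A_{n}y\|_{p}\;\le\;\frac{m(V^{n}\bigtriangleup V^{n}h)}{m(V^{n})}\,\sup_{g\in G}\|\alpha_{g}\|\,\|x\|_{p}\;\longrightarrow\;0,
\]
that is, norm convergence and nothing more: since $\alpha$ does not act on $\mathcal{M}$, the estimate $\|A_{n}y\|_{\infty}\lesssim\frac{m(V^{n}\bigtriangleup V^{n}h)}{m(V^{n})}\|x\|_{\infty}$, which is what makes this step work in the automorphism setting of Proposition \ref{prop:individual}, is unavailable. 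Norm convergence of a specific sequence cannot be upgraded to b.a.u.\ convergence by the maximal inequality or the Banach principle (Lemma \ref{lem:principle litvinov}): those results only show that the set of $x$ whose averages converge is closed; they do not produce convergence on your dense subspace, which is exactly what is missing. The paper's proof (Theorem \ref{cor:individual balls}(2)) requires genuinely quantitative input at this point: Tessera's shell estimate $m(V^{n+1}\setminus V^{n})/m(V^{n})\le Cn^{-\delta}$ (Lemma \ref{lem:tessera poly growth}), which after passing to the subsequence $n^{\delta'}$ with $\delta'=[\delta^{-1}]+1$ makes the norms $\|A^{1}_{k^{\delta'}}y\|_{p}=O(1/k)$ summable in $\ell_{p}$; the operator inequality $A^{1}_{r_{j}}y\le\bigl[\sum_{i}(A^{1}_{r_{i}}y)^{p}\bigr]^{1/p}$ then places the subsequence in $L_{p}(\mathcal{M};c_{0})$, and finally positivity together with $m(V^{n(k)^{\delta'}})/m(V^{k})\to1$ sandwiches the full sequence between subsequence terms. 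None of this appears in your outline, so the convergence claim in part (1) is unproven as stated. Relatedly, your closing assessment misplaces the difficulty: the transference step for positive uniformly bounded actions is handled by the tensorization property of positive maps, exactly as in the proof of Theorem \ref{thm:transference}; the new work in this regime is the individual ergodic theorem on a single $L_{p}$-space.
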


The theorems rely on several key results obtained in this work. We address
the following subjects.

(i) \emph{Noncommutative transference principles.} Our first key ingredient
is a noncommutative variant of Calder\'{o}n's transference principle in
\cite{calderon68transference} (see also
\cite{coifmanweiss76transference,fendler98dilation}), given in
Theorems~\ref{thm:transference} and~\ref{thm:transference weak}. More precisely,
we prove that for actions by an amenable group, in order to establish the
noncommutative maximal ergodic inequalities, it suffices to show the inequalities
for translation actions on operator-valued functions. We remark that the
particular case of certain actions by $\mathbb{R}$ is also discussed in
\cite{hong17dimfree} by the first author.

(ii)
\emph{Noncommutative Hardy--Littlewood maximal inequalities on metric measure
	spaces.} As for the second key ingredient, we prove in Theorem~\ref{thm:hl max} a noncommutative extension of Hardy--Littlewood maximal
inequalities on metric measure spaces. For a doubling metric measure space
$(X,d,\mu )$, denote by $B(x,r)$ the ball with center $x$ and radius
$r$ with respect to the metric~$d$. Our result asserts that the Hardy--Littlewood
averaging operators on the $L_{p}(\mathcal{M})$-valued functions
\begin{equation*}
A_{r}f(x)=
\frac{1}{\mu (B(x,r))}
\int _{B(x,r)}f\,d\mu ,\quad f\in L_{p}
\bigl(X;L_{p}( \mathcal{M}) \bigr),x\in X,r>0
\end{equation*}
satisfy the weak type $(1,1)$ and strong type $(p,p)$ inequalities. We
remark that the classical argument via covering lemmas does not seem to
fit into this operator-valued setting. Instead, our approach is based on
the study of random dyadic systems by Naor and Tao in
\cite{naortao10randommartingale} and Hyt\"{o}nen and Kairema in
\cite{hytonenkairema12dyadic}. The key idea is to relate the desired inequality
to noncommutative martingales, and to use the available results in quantum
probability developed in \cite{cuculescu71doob} and
\cite{junge02doob}. The approach is inspired by Mei's famous work in
\cite{mei03dyadic} and \cite{mei07hardy}, which asserts that the usual
continuous BMO space is the intersection of several dyadic BMO spaces.

(iii) \emph{Domination by Markov operators.} In the study of ergodic theorems
for actions by free groups or free abelian groups, it is a key fact that
the associated ergodic averages can be dominated by the standard averaging
operators of the form $\frac{1}{n}\sum _{k=1}^{n}T^{k}$ for some map
$T$ (see \cite{brunel73ergzn,nevostein94freeerg}). Also, in
\cite{steinstromberg83maxlargen}, Stein and Str\"{o}mberg apply the Markov
semigroup with heat kernels to estimate the maximal inequalities on Euclidean
spaces with large dimensions. In this paper, we build a similar result
for groups of polynomial growth. Our approach is new and the construction
follows easily from some typical Markov chains on these groups. More precisely,
we show in Proposition~\ref{prop:walk domination} that for a group
$G$ of polynomial growth with a symmetric compact generating subset
$V\subset G$, and for an action $\alpha $ of $G$, there exists a constant
$c$ such that
\begin{equation*}
\frac{1}{m(V^{n})}
\int _{V^{n}} \alpha _{s} x \,dm(s) \leq
\frac{c}{n^{2}}
\sum _{k=1}^{2n^{2}}T^{k} x,
\quad x\geq 0,
\end{equation*}
where $T=\frac{1}{m(V)}\int _{V} \alpha _{s} \,dm(s)$. The result will help
us to improve the weak type inequalities in Theorem~\ref{thm:main poly}.

(iv) \emph{Individual ergodic theorems for $L_{p}$ representations.} In
the classical setting, the individual ergodic theorem holds for positive
contractions on $L_{p}$-spaces with one fixed $p\in (1,\infty )$ (see
\cite{ionescutulcea64ergisometry,akcoglu75erg}). The results can be also
generalized for positive power-bounded operators and more general Lamperti
operators (see, e.g., \cite{kan78erglamperti,martinreyesdelatorre88ergpowerbdd,tempelman15erglampertigroup}). However, in the noncommutative setting,
the individual ergodic theorems on $L_{p}$-spaces were only known for operators
which can be extended to $L_{1}+L_{\infty }$. In Section~\ref{sect:individual} we will develop some new methods to prove the individual
ergodic theorems for operators on one fixed $L_{p}$-space.

Apart from the above approach, we also provide in Section~\ref{sect:group} an alternative proof of Theorem~\ref{thm:main intro} for
discrete groups of polynomial growth. Compared to the previous approach,
this proof is much more group-theoretical and has its own interests. It
relies essentially on the concrete structure of groups of polynomial growth
discovered by Bass, Gromov, and Wolf.

We remark that although our results are stated in the setting of tracial
$L_{p}$-spaces, a large number of the results can be extended to the general
nontracial case without difficulty. Since the standard methods for these
generalizations are already well developed in
\cite{haagerupjx10reduction} and \cite{jungexu07erg}, we leave the details
to the reader and restrict our attention to the semifinite case for simplicity
of exposition.

We end this Introduction with a brief description of the organization of
the paper. In the next section, we recall some basics on noncommutative
maximal operators as well as actions by amenable groups. Section~\ref{sect:transference} is devoted to the proof
of the noncommutative variant of Calder\'{o}n's transference principle.
In Section~\ref{sect:max}, we prove the Hardy--Littlewood maximal inequalities
mentioned above and deduce the maximal inequalities in Theorem~\ref{thm:main intro}. We also use similar ideas to establish the ergodic
theorems for increasing sequences of compact subgroups (Theorem~\ref{thm:max subgp}). In the last part of the section, we will provide
an approach based on the random walk theory, which relates the ball averages
to the classical ergodic averages of Markov operators. In Section~\ref{sect:group}, we provide an alternative group-theoretical approach
to Theorem~\ref{thm:main poly}. In Section~\ref{sect:individual}, we discuss
the individual ergodic theorems, which prove the bilateral almost uniform
convergences in Theorem~\ref{thm:main intro}. Also, we give new results
on almost uniform convergences associated with actions on one fixed
$L_{p}$-space.

\section{Preliminaries}
\label{sec2}

\subsection{Noncommutative $L_{p}$-spaces and noncommutative maximal norms}
\label{sect:max ineq}

Throughout the paper, unless explicitly stated otherwise,
$\mathcal{M}$ will always denote a semifinite von Neumann algebra equipped
with a normal semifinite trace~$\tau$. Let $\mathcal{S}_{+}$ denote the
set of all $x\in {\mathcal{M}}_{+}$ such that
$\tau (\operatorname{supp} x)<\infty $, where $\operatorname{supp} x$ denotes the support
of~$x$. Let ${\mathcal{S}}$ be the linear span of
${\mathcal{S}}_{+}$. Given $1\leq p<\infty $, we define
\begin{equation*}
\llVert x \rrVert _{p}= \bigl[\tau \bigl( \llvert x \rrvert
^{p} \bigr) \bigr]^{1/p},\quad x\in {
	\mathcal{S}},
\end{equation*}
where $ \llvert  x \rrvert  =(x^{*}x)^{1/2}$ is the modulus of~$x$. Then
$({\mathcal{S}},  \llVert  \cdot  \rrVert  _{p})$ is a normed space, whose completion
is the noncommutative $L_{p}$-space associated with
$({\mathcal{M}},\tau )$, denoted by $L_{p}(\mathcal{M})$. As usual,
we set $L_{\infty }(\mathcal{M})={\mathcal{M}}$ equipped with the
operator norm. Let $L_{0}(\mathcal{M})$ denote the space of all
closed densely defined operators on $H$ measurable with respect to
$(\mathcal{M},\tau )$ ($H$ being the Hilbert space on which
$\mathcal{M}$ acts). Then $L_{p}(\mathcal{M})$ can
be viewed as closed densely defined operators on~$H$. We denote by
$L_{0}^{+}(\mathcal{M})$ the positive part of
$L_{0}(\mathcal{M})$, and set
$L_{p}^{+}(\mathcal{M})=L_{0}^{+}(\mathcal{M})
\cap L_{p}(\mathcal{M})$. We refer to
\cite{pisierxu2003nclp} for more information on noncommutative
$L_{p}$-spaces.

For a $\sigma $-finite measure space $(X,\Sigma ,\mu )$, we consider the
von Neumann algebraic tensor product
$L_{\infty }(X)\bar{\otimes }\mathcal{M}$ equipped with the trace
$\int \otimes \tau $, where $\int $ denotes the integral against~$\mu$. For $1\leq p<\infty $, the space
$L_{p}(L_{\infty }(X)\bar{\otimes }\mathcal{M})$ isometrically coincides
with $L_{p}(X;L_{p}(\mathcal{M}))$, the usual $L_{p}$-space of $p$-integrable
functions from $X$ to $L_{p}(\mathcal{M})$. In this paper we will not distinguish
these two notions unless specified otherwise.

Maximal norms in the noncommutative setting require a specific definition.
The subtlety is that $\sup _{n} \llvert  x_{n} \rrvert  $ does not make any sense for a sequence
$(x_{n})_{n}$ of arbitrary operators. This difficulty is overcome by considering
the spaces $L_{p}(\mathcal{M};\ell _{\infty })$, which are the
noncommutative analogues of the usual Bochner spaces
$L_{p}(X;\ell _{\infty })$. These vector-valued $L_{p}$-spaces were first
introduced by Pisier \cite{pisier1998ncvectorLp} for injective von Neumann
algebras and then extended to general von Neumann algebras by Junge
\cite{junge02doob}. The descriptions and properties below can be found
in \cite[Section~2]{jungexu07erg}. Given $1\le p\le \infty $,
$L_{p}(\mathcal{M};\ell _{\infty })$ is defined as the space of
all sequences $x=(x_{n})_{n\ge 0}$ in $L_{p}(\mathcal{M})$ which
admit a factorization of the following form: there are
$a,b\in L_{2p}(\mathcal{M})$ and a bounded sequence
$y=(y_{n})\subset L_{\infty }(\mathcal{M})$ such that
\begin{equation*}
x_{n}=ay_{n}b,\quad \forall n\ge 0.
\end{equation*}
We then define
\begin{equation*}
\llVert x \rrVert _{L_{p}(\mathcal{M};\ell _{\infty })}=\inf \bigl\{ \llVert a \rrVert _{2p}
\sup _{n\ge 0} \llVert y_{n} \rrVert _{\infty }
\llVert b \rrVert _{2p} \bigr\},
\end{equation*}
where the infimum runs over all factorizations as above. We will adopt
the convention that the norm
$ \llVert  x \rrVert  _{L_{p}(\mathcal{M};\ell _{\infty })}$ is denoted by
$   \llVert  \sup _{n}^{+}x_{n}   \rrVert  _{p}$. As an intuitive description, we remark
that a positive sequence $(x_{n})_{n\ge 0}$ of $L_{p}(\mathcal{M})$ belongs
to $L_{p}(\mathcal{M};\ell _{\infty })$ if and only if there exists
a positive $a\in L_{p}(\mathcal{M})$ such that $x_{n}\leq a$ for any
$n\geq 0$ and in this case,
\begin{equation*}
\bigl\llVert {
	\sup _{n}}^{+}x_{n} \bigr\rrVert
_{p}=\inf \bigl\{ \llVert a \rrVert _{p}: a \in
L_{p}(\mathcal{M}), a\ge 0 \text{ and } x_{n}\leq a \text{
	for any }n\geq 0 \bigr\} .
\end{equation*}
Also, we denote by $L_{p}(\mathcal{M};c_{0})$ the closure of finite sequences
in $L_{p}(\mathcal{M};\ell _{\infty })$ for
$1\le p<\infty $. On the other hand, we may also define the space
$L_{p}(\mathcal{M};\ell _{\infty }^{c})$, which is the space of
all sequences $x=(x_{n})_{n\ge 0}$ in $L_{p}(\mathcal{M})$ which
admit a factorization of the following form: there are
$a\in L_{p}(\mathcal{M})$ and
$y=(y_{n})\subset L_{\infty }(\mathcal{M})$ such that
\begin{equation*}
x_{n}=y_{n}a,\quad \forall n\geq 0.
\end{equation*}
And we define
\begin{equation*}
\llVert x \rrVert _{L_{p}(\mathcal{M};\ell _{\infty }^{c})}=\inf \bigl\{
\sup _{n\ge 0} \llVert y_{n} \rrVert _{\infty }
\llVert a \rrVert _{p} \bigr\},
\end{equation*}
where the infimum runs over all factorizations as above. Similarly, we
denote by $L_{p}(\mathcal{M};c_{0}^{c})$ the closure of finite sequences
in $L_{p}(\mathcal{M};\ell _{\infty }^{c})$. (We refer to
\cite{defantjunge04maxau} and \cite{musat03interpolationbmolp} for more
information.)

Indeed, for any index $I$, we can define the spaces
$L_{p}(\mathcal{M};\ell _{\infty }(I))$ of families
$(x_{i})_{i\in I}$ in $L_{p}(\mathcal{M})$ with similar factorizations
as above. We omit the details and we will simply denote the spaces by the
same notation $L_{p}(\mathcal{M};\ell _{\infty })$ and
$L_{p}(\mathcal{M};c_{0})$ if no confusion can occur.

The following properties will be useful here.
\begin{prop}%
	\label{prop:nc max funct}
	\mbox{}%
	\begin{enumerate}
		\item[(1)] A~family
		$(x_{i})_{i\in I}\subset L_{p}(\mathcal{M})$ belongs to
		$L_{p}(\mathcal{M};\ell _{\infty })$ if and only if\vadjust{\goodbreak}
		\begin{equation*}
		\sup _{J\mathrm{ finite}} \bigl\llVert \mathop{{
				\sup _{i\in J}}^{+}}x_{i} \bigr\rrVert
		_{p} < \infty ,
		\end{equation*}
		and in this case
		\begin{equation*}
		\bigl\llVert \mathop{{
				\sup _{i\in I}}^{+}}x_{i} \bigr\rrVert
		_{p}=
		\sup _{J
			\mathrm{ finite}} \bigl\llVert \mathop{{
				\sup _{i\in J}}^{+}}x_{i} \bigr\rrVert
		_{p}.
		\end{equation*}
		\item[(2)] Let $1\le p_{0}<p_{1}\le \infty $, and let $0<\theta <1$. Then
		we have isometrically
		\begin{equation}
		L_{p}(\mathcal{M};\ell _{\infty })= \bigl(L_{p_{0}}(
		\mathcal{M};\ell _{\infty }), L_{p_{1}}( \mathcal{M};\ell
		_{\infty }) \bigr)_{\theta } \label{eq:interp-vectorLp} ,
		\end{equation}
		where $\frac{1}{p}=\frac{1-\theta }{p_{0}}+\frac{\theta }{p_{1}}$. If additionally
		$p_{0}\geq 2$, then we have isometrically
		\begin{equation}
		\label{eq:c-interp-vectorLp} L_{p}(\mathcal{M};\ell _{\infty }^{c})=
		\bigl(L_{p_{0}}( \mathcal{M};\ell _{\infty }^{c}),
		L_{p_{1}}( \mathcal{M};\ell _{\infty }^{c})
		\bigr)_{\theta },
		\end{equation}
		where $\frac{1}{p}=\frac{1-\theta }{p_{0}}+\frac{\theta }{p_{1}}$.
	\end{enumerate}
\end{prop}
Based on these notions we can discuss the noncommutative maximal inequalities.
\begin{defn}%
	\label{defn:type-ineq}
	Let $1\leq p\leq \infty $, and let $S=(S_{i})_{i\in I}$ be a family of
	maps from $L_{p}^{+}(\mathcal{M})$ to
	$L_{0}^{+}(\mathcal{M})$.
	\begin{enumerate}
		\item[(1)] For $p<\infty $, we say that $S$ is of
		\emph{weak type $(p,p)$ with constant $C$} if there exists a constant
		$C>0$ such that, for all $x\in L_{p}^{+}(\mathcal{M})$ and
		$\lambda >0$, there is a projection $e\in \mathcal{M}$ satisfying
		\begin{equation*}
		\tau (1-e)\leq
		\frac{C^{p}}{\lambda ^{p}} \llVert x \rrVert _{p}^{p},\quad
		eS_{i}(x)e \leq \lambda e,i\in I.
		\end{equation*}
		\item[(2)] For $1\leq p\leq \infty $, we say that $S$ is of \emph{strong
			type $(p,p)$ with constant $C$} if there exists a constant $C>0$ such that
		\begin{equation*}
		\bigl\llVert {
			\sup _{i\in I}}^{+}S_{i}x \bigr\rrVert
		_{p}\leq C \llVert x \rrVert _{p},\quad x\in L_{p}(
		\mathcal{M}).
		\end{equation*}
	\end{enumerate}
\end{defn}
We will also need a reduction below for weak type inequalities.
\begin{lem}
	\label{lem:reduction finite weak}%
	If for all finite subsets $J\subset I$, $(S_{i})_{i\in J}$ is of weak type
	$(p,p)$ with constant $C$, then $(S_{i})_{i\in I}$ is of weak type
	$(p,p)$ with constant $2^{1/p}C$.
\end{lem}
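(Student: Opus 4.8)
The plan is to manufacture a single projection for the whole family $(S_i)_{i\in I}$ as a weak$^*$ limit of the projections supplied by the finite subfamilies, and then to correct this limit to a genuine projection by spectral calculus. Fix $x\in L_p^+(\mathcal{M})$ and $\lambda>0$, and set $\mu:=\lambda/2$. For every finite $J\subset I$ the hypothesis, applied at threshold $\mu$, produces a projection $e_J\in\mathcal{M}$ with
\[
\tau(1-e_J)\le \frac{C^p}{\mu^p}\|x\|_p^p,\qquad e_J S_i(x)e_J\le \mu e_J,\quad i\in J.
\]
I regard $(e_J)$ as a net indexed by the finite subsets of $I$ directed by inclusion. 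Since the $e_J$ lie in the weak$^*$ compact unit ball of $\mathcal{M}$, I pass to a subnet converging weak$^*$ to some $a\in\mathcal{M}$ with $0\le a\le 1$; note that for each fixed $i\in I$ one has $i\in J$ eventually along the net. For the distributional bound I use that the normal trace $\tau$ is weak$^*$ lower semicontinuous on $\mathcal{M}_+$: since $1-e_J\to 1-a$ weak$^*$, this gives $\tau(1-a)\le \liminf_J \tau(1-e_J)\le C^p\mu^{-p}\|x\|_p^p$.

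Passing the operator inequality $e_J S_i(x)e_J\le\mu e_J$ to the limit is the main obstacle, since multiplication is not jointly weak$^*$ continuous and $y:=S_i(x)$ is in general unbounded. I would handle this by truncation together with a Hilbert space semicontinuity argument. Writing $y_N:=\int_0^N t\,dE^{y}_t$ for the bounded spectral truncation of $y$, one has $e_J y_N e_J\le e_J y e_J\le \mu e_J$. Testing against a vector state $\omega=\langle\,\cdot\,\zeta,\zeta\rangle$ and using that $e_J\zeta\to a\zeta$ weakly in $H$ (a consequence of weak$^*$ convergence tested on vector functionals), the weak lower semicontinuity of the Hilbert space norm yields
\[
\|y_N^{1/2}a\zeta\|^2\le\liminf_J\|y_N^{1/2}e_J\zeta\|^2\le \mu\,\liminf_J\langle e_J\zeta,\zeta\rangle=\mu\,\langle a\zeta,\zeta\rangle .
\]
As $\zeta$ ranges over a total set of vectors and then $N\to\infty$, this upgrades to the form inequality $a\,S_i(x)\,a\le \mu\, a$, valid for every $i\in I$.

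Finally I convert $a$ into a projection by setting $e:=\chi_{(1/2,1]}(a)$, which commutes with $a$. Since $a\le \tfrac12$ on the range of $1-e$, functional calculus gives $1-e\le 2(1-a)$, whence
\[
\tau(1-e)\le 2\,\tau(1-a)\le 2^{p+1}C^p\lambda^{-p}\|x\|_p^p\le (4C)^p\lambda^{-p}\|x\|_p^p,
\]
the last step using $p\ge 1$. For the maximal bound, on the range of $e$ the element $b:=ae=ea$ is invertible with $b^{-1}\le 2e$; sandwiching $a\,S_i(x)\,a\le\mu a$ first between $e$ and then between $b^{-1}$ gives
\[
e\,S_i(x)\,e=b^{-1}\big(b\,S_i(x)\,b\big)b^{-1}\le \mu\, b^{-1}\le 2\mu\,e=\lambda e,\quad i\in I .
\]
Thus the single projection $e$ witnesses weak type $(p,p)$ for $(S_i)_{i\in I}$ with constant $4C$. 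The delicate point throughout is this limiting step for the operator inequality; everything else is routine spectral bookkeeping, and the factor $4$ is exactly the combined cost of the threshold rescaling $\mu=\lambda/2$ and the spectral truncation $e=\chi_{(1/2,1]}(a)$.
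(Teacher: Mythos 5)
Your proof is correct. The paper itself does not reprove this lemma but simply quotes it from \cite{hong17dimfree}, and your argument — extracting a weak* cluster point $a$ of the projections coming from the finite subfamilies at threshold $\mu=\lambda/2$, using normality (hence weak* lower semicontinuity) of $\tau$ for the trace bound, passing the operator inequality to the limit via truncation and vector states, then cutting down to $e=\chi_{(1/2,1]}(a)$ with the $b^{-1}$-sandwich, so that $2^{p+1}\le 4^p$ yields the constant $4C$ — is exactly the standard compactness route taken in that reference.
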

\begin{proof}
	Indeed, the proof of \cite[Lemma~3.2]{hong17dimfree} shows the following
	general property: if a family
	$(x_{i})_{i\in I}\subset L_{0}^{+}(\mathcal{M})$ and a constant
	$c>0$ satisfy that for all $\lambda >0$ and all finite subsets
	$J\subset I$, there exists a projection
	$e_{J}\in \mathcal{M}$ with
	\begin{equation*}
	\tau (1-e_{J})\leq c^{p} \lambda
	^{-p},\quad\quad \llVert e_{J} x_{i}
	e_{J} \rrVert _{\infty }\leq \lambda ,\quad i\in J,
	\end{equation*}
	then there exists a projection $e \in \mathcal{M}$ with
	\begin{equation*}
	\tau (1-e )\leq 2 c ^{p} \lambda ^{-p},
	\quad\quad \llVert e x_{i} e \rrVert _{\infty
	}\leq \lambda ,\quad
	i\in I.
	\end{equation*}
	To see this, we note that $\mathcal{S} _{+}$ is dense in
	$L_{0}^{+}(\mathcal{M})$ with respect to the topology of convergence
	in measure. So we may assume without loss of generality that
	$(x_{i})_{i\in I}\subset \mathcal{S} _{+}$. Then it suffices to take a w*-accumulation
	point $u $ of $(e_{J})_{J}$ and set the spectral projection
	$e=\chi _{[ \frac{1}{2}, 1]}(u)$. In this case we have
	\begin{align*}
	\llVert ex_{i} e \rrVert _{\infty }&= \llVert eux_{i}
	ue \rrVert _{\infty
	}
	\\
	&\leq \llVert ux_{i} u \rrVert _{\infty }= \llVert
	x_{i}^{1/2} u \rrVert _{\infty }^{2}
	=
	\sup _{y
		\in L_{1} (\mathcal{M}) \cap \mathcal{M}}
	\frac{ \llvert  \tau (x_{i}^{1/2} u y) \rrvert  ^{2}}{ \llVert  y \rrVert  _{1}^{2}}
	\\
	&=
	\sup _{y\in L_{1} (
		\mathcal{M})}
	\sup _{J}
	\frac{ \llvert  \tau (x_{i}^{1/2} e_{J} y) \rrvert  ^{2}}{ \llVert  y \rrVert  _{1}^{2}}
	\\
	&\leq
	\sup _{J} \llVert x_{i}^{1/2}
	e_{J} \rrVert _{\infty }^{2} \leq \lambda
	\end{align*}
	and
	\begin{equation*}
	e = \chi _{[ \frac{1}{2}, 1]}(u)\leq 2u, \quad 1-e \leq 2(1-u) .
	\end{equation*}
	Then we obtain the claimed property. The lemma follows by taking
	$x_{i} = S_{i}(x )$ and $c=C \llVert  x \rrVert  _{p}$.
\end{proof}
The following noncommutative Doob inequalities will play a crucial role
in our proof.
\begin{lem}[{\cite[Proposition~6]{cuculescu71doob},\cite[Theorem~0.2]{junge02doob}}]
	\label{lem:doob}%
	Let $(\mathcal{M}_{n})_{n\in \mathbb{Z}}$ be
	an increasing sequence of von Neumann subalgebras of $\mathcal{M}$ such
	that $\bigcup _{n\in \mathbb{Z}}\mathcal{M}_{n}$ is w*-dense in
	$\mathcal{M}$. Denote by $\mathbb{E}_{n}$ the $\tau $-preserving conditional
	expectation from $L_{p}(\mathcal{M})$ onto $L_{p}(\mathcal{M}_{n})$. Then
	$(\mathbb{E}_{n})_{n\in \mathbb{Z}}$ is of weak type $(1,1)$ with a universal
	constant and is of strong type $(p,p)$ with constants depending only on
	$p$ for all $1<p<\infty $.
\end{lem}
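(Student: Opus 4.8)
The plan is to reduce the doubly-infinite maximal inequality to finite sub-filtrations, where the classical martingale techniques of Cuculescu and Junge apply, and then to pass to the limit using the reduction results already available in the excerpt. The two-sided nature of the index $n\in\mathbb Z$, and the absence of any hypothesis on $\bigcap_n\mathcal M_n$, will play no essential role once this reduction is made.

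First I would reduce to finite index sets. For the strong type $(p,p)$, Proposition \ref{prop:nc max funct}(1) gives
\[
\Big\|{\sup_{n\in\mathbb Z}}^+\mathbb E_n x\Big\|_p=\sup_{J\subset\mathbb Z\text{ finite}}\Big\|{\sup_{n\in J}}^+\mathbb E_n x\Big\|_p,
\]
so it suffices to bound the maximal norm over each finite $J$ by $C_p\|x\|_p$ with $C_p$ independent of $J$. For the weak type $(1,1)$, Lemma \ref{lem:reduction finite weak} reduces the claim, at the cost of a factor $4$, to proving the weak type $(1,1)$ for every finite $J$ with a uniform constant. In either case, fixing a finite $J=\{n_1<\dots<n_k\}$ and relabelling, we are reduced to a \emph{finite increasing filtration} $\mathcal N_1\subset\dots\subset\mathcal N_k$ of $\mathcal M$, with $\tau$-preserving conditional expectations $\mathcal E_1,\dots,\mathcal E_k$.

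For a finite filtration the weak type $(1,1)$ is exactly Cuculescu's construction. Given $x\in L_1^+(\mathcal M)$ and $\lambda>0$, I would set $q_0=\mathbf 1$ and define recursively the spectral projections $q_j=\mathbf 1_{[0,\lambda]}\big(q_{j-1}\mathcal E_j(x)q_{j-1}\big)\in\mathcal N_j$. One checks that $q_j\le q_{j-1}$ and $q_j\mathcal E_j(x)q_j\le\lambda q_j$, so that the projection $e=q_k=\bigwedge_j q_j$ satisfies $e\mathcal E_j(x)e\le\lambda e$ for all $j$; Cuculescu's trace estimate \cite{cuculescu71doob} then yields $\tau(\mathbf 1-e)\le\lambda^{-1}\|x\|_1$, which is the weak type $(1,1)$ bound of Definition \ref{defn:type-ineq} with a universal constant. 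The strong type $(\infty,\infty)$ is trivial, since each $\mathcal E_j$ is a positive contraction and hence $\mathcal E_j(x)\le\|x\|_\infty\mathbf 1$, giving $\big\|{\sup_j}^+\mathcal E_j x\big\|_\infty\le\|x\|_\infty$.

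It then remains to obtain the strong type $(p,p)$ for $1<p<\infty$, uniformly in the finite filtration. Here I would invoke the interpolation identity \eqref{eq:interp-vectorLp} of Proposition \ref{prop:nc max funct}(2), which realizes $L_p(\mathcal M;\ell_\infty)$ as an interpolation space between the endpoints, and combine it with the weak-$(1,1)$ and $L_\infty$ bounds above through the noncommutative Marcinkiewicz-type interpolation for maximal functions; this gives $\big\|{\sup_j}^+\mathcal E_j x\big\|_p\le C_p\|x\|_p$ with $C_p$ depending only on $p$. Finally I would remove the finiteness restriction by taking the supremum over $J$ in the strong case and by applying Lemma \ref{lem:reduction finite weak} in the weak case. \emph{The main obstacle is the strong type $(p,p)$}: this is precisely Junge's noncommutative Doob inequality \cite{junge02doob}, whose proof, whether through the dual Doob inequality or through the maximal-function interpolation just sketched, rests on genuinely operator-space techniques with no commutative counterpart, and it is here that the depth of the lemma lies. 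The reduction to finite filtrations and Cuculescu's construction are, by comparison, elementary.
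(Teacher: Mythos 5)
The paper gives no proof of Lemma \ref{lem:doob}: it is quoted directly from \cite{cuculescu71doob,junge02doob}, so your sketch has to be measured against that provenance. Most of it is sound and consistent with it: the reduction to finite index sets via Proposition \ref{prop:nc max funct}(1) and Lemma \ref{lem:reduction finite weak} is legitimate (the two-sided index set $\mathbb{Z}$ and the w*-density hypothesis indeed play no role in the maximal inequalities; density only matters for convergence statements), Cuculescu's iterated spectral projections give the weak type $(1,1)$ with constant $1$ exactly as you describe, and the $(\infty,\infty)$ bound is trivial by positivity.

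The gap is in the strong type $(p,p)$ step. The identity \eqref{eq:interp-vectorLp} of Proposition \ref{prop:nc max funct}(2) is a complex interpolation identity between $L_{p_0}(\mathcal{M};\ell_\infty)$ and $L_{p_1}(\mathcal{M};\ell_\infty)$: it converts two \emph{strong} type bounds into intermediate strong type bounds, and it has no weak-type endpoint version. It therefore cannot be combined with your Cuculescu estimate; in the noncommutative setting there is no soft Marcinkiewicz argument that upgrades the projection-based weak $(1,1)$ inequality plus the $L_\infty$ bound to a maximal $L_p$ bound, and this is precisely where the difficulty of the subject is concentrated. The tool that does this job is the Marcinkiewicz-type interpolation theorem for noncommutative maximal functions of Junge and Xu (\cite[Theorem 3.1]{jungexu07erg}), whose proof is a direct and substantial argument with iterated Cuculescu projections, entirely independent of \eqref{eq:interp-vectorLp}. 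So your route can be repaired in one of two ways: invoke \cite[Theorem 3.1]{jungexu07erg} explicitly (conditional expectations are positive, of weak type $(1,1)$ and of type $(\infty,\infty)$, so the theorem applies and recovers Doob's maximal inequality for $1<p<\infty$), or simply cite Junge's Doob inequality \cite{junge02doob} as the paper does --- noting that Junge's own proof proceeds by duality through the dual Doob inequality, not by interpolation. As written, the sentence attributing the $(p,p)$ bound to \eqref{eq:interp-vectorLp} is the one step that would fail.
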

Note that only monotone sequences of the form
$(\mathbb{E}_{n})_{n\geq 0}$ are concerned in the statement of
\cite{cuculescu71doob} and \cite{junge02doob}. However, we can easily deduce
the general result in Lemma~\ref{lem:doob} from the previous case. Indeed,
for a sequence of the form $(\mathbb{E}_{n})_{n\in \mathbb{Z}}$ in the
above lemma, the statement in \cite{cuculescu71doob} and
\cite{junge02doob} yields that for all $k\in \mathbb{Z}$, the sequence
$(\mathbb{E}_{n})_{ n \geq k}$ is of weak type $(1,1)$ and strong type
$(p,p)$ with constants independent of $k $ by reindexing the sequence.
In particular, $(\mathbb{E}_{n})_{ -k \leq n \leq k}$ satisfies the same
maximal inequalities. Then by Proposition~\ref{prop:nc max funct}(1) and
Lemma~\ref{lem:reduction finite weak}, we see that the same property holds
for the sequence $(\mathbb{E}_{n})_{n\in \mathbb{Z}}$ as well.

\subsection{Actions by amenable groups}
\label{sub:Actions-by-amenable}

Unless explicitly stated otherwise, throughout $G$ will denote a locally
compact group with neutral element $e$, equipped with a fixed right invariant
Haar measure~$m$. For a Banach space $E$, we say that
\begin{equation*}
\alpha :G\to B(E),\quad\quad s\mapsto \alpha _{s}
\end{equation*}
is an action if $\alpha _{s}\circ \alpha _{h}=\alpha _{st}$ for all
$s,t\in G$. Let $(\mathcal{M},\tau )$ be as before. For a fixed
$1\leq p\leq \infty $, we will be interested in actions
$\alpha =(\alpha _{s})_{s\in G}$ on $L_{p}(\mathcal{M})$ with
the following conditions:
\begin{enumerate}
	\item[$(\mathbf{A} ^{p}_{1})$] Continuity: for all
	$x\in L_{p}(\mathcal{M})$, the map $s\mapsto \alpha _{s}x$ from $G$ to
	$L_{p}(\mathcal{M})$ is continuous. Here we take the norm topology on
	$L_{p}(\mathcal{M})$ if $1\leq p<\infty $ and the w{*}-topology if
	$p=\infty $.
	\item[$(\mathbf{A} ^{p}_{2})$] Uniform boundedness:
	$\sup _{s\in G} \llVert  \alpha _{s}:L_{p}(\mathcal{M})\to L_{p}(
	\mathcal{M}) \rrVert  <\infty $.
	\item[$(\mathbf{A} ^{p}_{3})$] Positivity: for all $s\in G$,
	$\alpha _{s}x\geq 0$ if $x\geq 0$ in $L_{p}(\mathcal{M})$.
\end{enumerate}
As a natural example, if $\alpha $ is an action on $\mathcal{M}$ satisfying
the condition:
\begin{enumerate}
	\item[$(\mathbf{A} ')$] for all $x\in \mathcal{M}$, the map
	$s\mapsto \alpha _{s}x$ from $G$ to $\mathcal{M}$ is continuous with respect
	to the w{*}-topology on $\mathcal{M}$; and for all $s\in G$,
	$\alpha _{s}$ is an automorphism of $\mathcal{M}$ (in the sense of
	$*$-algebraic structures) such that $\tau =\tau \circ \alpha _{s}$,
\end{enumerate}
then $\alpha $ extends naturally to actions on $L_{p}(\mathcal{M})$ with
conditions $(\mathbf{A} ^{p}_{1})$--$(\mathbf{A} ^{p}_{3})$ for all
$1\leq p\leq \infty $, still denoted by $\alpha $ (see, e.g.,
\cite[Lemma~1.1]{jungexu07erg}). In this case for each $s\in G$,
$\alpha _{s}$ is an isometry on $L_{p}(\mathcal{M})$. We refer to
\cite{bekka15kazhdannclp,olivier12kazhdannclp}, and
\cite{olivier13thesisactionlp} for other natural examples of group actions
on noncommutative $L_{p}$-spaces.

Recall that $G$ is said to be \emph{amenable} if $G$ admits a
\emph{F{\o}lner net}, that is, a net $(F_{i})_{i\in I}$ of measurable subsets
of $G$ with $m(F_{i})<\infty $ such that for all $s\in G$,
\begin{equation*}
\lim _{i}
\frac{m((F_{i}s)\bigtriangleup F_{i})}{m(F_{i})}=0.
\end{equation*}
Note that the above condition is a reformulation of the asymptotic invariance
\eqref{eq:asymp inv} for the general setting. It is known that
$(F_{i})_{i\in I}$ is a F{\o}lner net if for all compact measurable subsets
$K\subset G$,
\begin{equation}
\lim _{i}
\frac{m(F_{i}K)}{m(F_{i})}=1. \label{eq:folner condition cpt}
\end{equation}
Recall that $G$ is a compactly generated group
\emph{of polynomial growth} if the compact generating subset
$V\subset G$ satisfies
\begin{equation*}
m(V^{n})\leq kn^{r},\quad n\geq 1,
\end{equation*}
where $k>0$ and $r\in \mathbb{N}$ are constants independent of~$n$. It
is well known that any group of polynomial growth is amenable and the sequence
$(V^{n})_{n\geq 1}$ satisfies the above F{\o}lner condition (see, e.g.,
\cite{breuillard14polygrowth,tessera07polygrowth}). We refer to
\cite{paterson88amenability} for more information on amenable groups.

Now let $G$ be amenable, and let $(F_{i})_{i\in I}$ be a F{\o}lner net
in~$G$. Let $1<p<\infty $. Let $\alpha =(\alpha _{s})_{s\in G}$ be an action
of $G$ on $L_{p}(\mathcal{M})$ satisfying
$(\mathbf{A} ^{p}_{1})$--$(\mathbf{A} ^{p}_{3})$. Denote by $A_{i}$ the corresponding
averaging operators
\begin{equation*}
A_{i}x=
\frac{1}{m(F_{i})}
\int _{F_{i}}\alpha _{s}x\,dm(s),\quad x\in
L_{p}( \mathcal{M}).
\end{equation*}
According to the mean ergodic theorem for amenable groups (see, e.g.,
\cite[Th\'{e}or\`{e}me 2.2.7]{orlean10ergbook}), we have a canonical splitting
on $L_{p}(\mathcal{M})$:
\begin{equation*}
L_{p}(\mathcal{M})=\mathcal{F}_{p}\oplus
\mathcal{F}_{p}{}^{
	\perp }
\end{equation*}
with
\begin{equation}
\begin{aligned}
\mathcal{F}_{p}&= \bigl\{x\in L_{p}(\mathcal{M}): \alpha
_{s}x=x,s \in G \bigr\}, \\\mathcal{F}_{p}{}^{\perp }&=
\overline{\mathrm{span}} \bigl\{x- \alpha _{s}x:s\in G,x\in
L_{p}(\mathcal{M}) \bigr\}. \label{eq:decomposition erg}
\end{aligned}
\end{equation}
Let $P$ be the bounded positive projection from
$L_{p}(\mathcal{M})$ onto $\mathcal{F}_{p}$. Then
$(A_{i}x)_{i}$ converges to $Px$ in $L_{p}(\mathcal{M})$ for
all $x\in L_{p}(\mathcal{M})$.

Assume additionally that $\alpha $ extends to an action on
$\bigcup _{1\leq p\leq \infty }L_{p}(\mathcal{M})$ satisfying
$(\mathbf{A} ^{p}_{1})$--$(\mathbf{A} ^{p}_{3})$ for \emph{every}
$1\leq p\leq \infty $. Note that the convergence in
$L_{p}(\mathcal{M})$ yields the convergence in measure in
$L_{0}(\mathcal{M})$, and in particular for
$x\in L_{1}^{+}(\mathcal{M})\cap \mathcal{M}_{+} $ and for $p_{0}=1$ or
$p_{0}=\infty $,
\begin{equation*}
\llVert Px \rrVert _{p_{0}}\leq
\liminf _{n\to \infty } \llVert A_{n}x \rrVert _{p_{0}}
\leq
\sup _{s
	\in G} \llVert \alpha _{s} \rrVert
_{B(L_{p_{0}}(\mathcal{M}))} \llVert x \rrVert _{p_{0}},
\end{equation*}
so by \cite[Lemma~1.1]{jungexu07erg} and
\cite[Proposition~1]{yeadon77max}, $P$ admits a continuous extension on
$L_{1}(\mathcal{M})$ and $\mathcal{M}$, still denoted
by~$P$. The splitting \eqref{eq:decomposition erg} is also true in this
case. Note then, however, that $\mathcal{F}_{\infty }{}^{\perp }$ is the
w*-closure of the space spanned by
$\{x-\alpha _{s}x:s\in G,x\in \mathcal{M}\}$.

\section{Noncommutative Calder\'on's transference principle}
\label{sect:transference}

In this section, we discuss a noncommutative variant of Calder\'{o}n's
transference principle. Fix $1\leq p< \infty $. Let $G$ be a locally compact
group,\vadjust{\goodbreak}  and let $\alpha $ be an action satisfying
$(\mathbf{A} ^{p}_{1})$--$(\mathbf{A} ^{p}_{3})$ in the previous section.
Let $(\mu _{n})_{n\geq 1}$ be a sequence of Radon probability measures
on~$G$. We consider the following averages:
\begin{equation}
A_{n}x=
\int _{G}\alpha _{s}x\,d\mu _{n}(s),\quad
x \in L_{p}( \mathcal{M}),n\geq 1. \label{eq:average mu_n}
\end{equation}
Let us also consider the natural translation action of $G$ on itself. We
are interested in the following averages: for all
$f\in L_{p}(G;L_{p}(\mathcal{M}))$,
\begin{equation}
A_{n}'f(s)=
\int _{G}f(st)\,d\mu _{n}(t),\quad s\in G,n\geq 1,
\label{eq:op conv mu_n}
\end{equation}
where the integration denotes the usual integration of Banach space-valued
functions.

\subsection{Strong type inequalities}
\label{sec3.1}

We begin with the transference principle for strong type $(p,p)$ inequalities.
\begin{thm}
	\label{thm:transference}%
	Assume that $G$ is amenable. Fix $1\leq p<\infty $. If there exists a constant
	$C>0$ such that
	\begin{equation*}
	\bigl\llVert {
		\sup _{n}}^{+}A_{n}'f \bigr\rrVert
	_{p}\leq C \llVert f \rrVert _{p},\quad f\in L_{p}
	\bigl(G;L_{p}( \mathcal{M}) \bigr),
	\end{equation*}
	then there exists a constant $C'>0$ depending on $\alpha $ such that
	\begin{equation*}
	\bigl\llVert {
		\sup _{n}}^{+}A_{n}x \bigr\rrVert
	_{p}\leq CC' \llVert x \rrVert _{p},\quad x\in
	L_{p}( \mathcal{M}).
	\end{equation*}
\end{thm}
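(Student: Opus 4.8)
The plan is to transfer the maximal inequality for the translation averages $A_n'$ on $L_p(G;L_p(\mathcal M))$ to the abstract averages $A_n$ on $L_p(\mathcal M)$ by a Følner-set averaging argument, which is the noncommutative incarnation of Calderón's classical transference. First I would fix $x\in L_p(\mathcal M)$ and a Følner set $F=F_i$, and for a large auxiliary set (a suitable enlargement of $F$) I would build the function $f\in L_p(G;L_p(\mathcal M))$ defined by $f(g)=\alpha_g x$ on that set and zero elsewhere. The key computation is that, for $g$ lying well inside $F$ (away from its boundary), one has $A_n'f(g)=\int_G \alpha_{gh}x\,d\mu_n(h)=\alpha_g\big(\int_G \alpha_h x\,d\mu_n(h)\big)=\alpha_g A_n x$, using the homomorphism property $\alpha_{gh}=\alpha_g\circ\alpha_h$ of the action. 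Thus on the bulk of $F$ the translated maximal function reproduces $\alpha_g(A_n x)$, and the hypothesis gives a uniform bound on $\big\|{\sup_n}^+ A_n'f\big\|_p$.

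Next I would pass from this pointwise-in-$g$ statement back to a bound on $\big\|{\sup_n}^+A_n x\big\|_p$. Here the noncommutative maximal norm must be handled through its factorization definition rather than a literal supremum: by Proposition \ref{prop:nc max funct}(1) it suffices to treat finite index sets, and I would exploit that $\alpha_g$ is positive and uniformly bounded (conditions $(\mathbf A^p_2)$ and $(\mathbf A^p_3)$) so that applying $\alpha_g$ and integrating over $g\in F$ interacts correctly with the order structure defining the maximal norm. Integrating the transferred inequality over $g\in F$ and using that $\sup_g\|\alpha_g\|=C'<\infty$ and that $\alpha_g$ preserves positivity, the left-hand side produces a factor comparable to $m(F)\big\|{\sup_n}^+A_n x\big\|_p^p$ while the right-hand side is controlled by $C^p\|f\|_p^p\approx C^p\, m(F_{\mathrm{enlarged}})\,\|x\|_p^p$. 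Dividing by $m(F)$ and letting the Følner net run, the ratio $m(F_{\mathrm{enlarged}})/m(F)\to 1$ by the Følner condition \eqref{eq:folner condition cpt}, yielding the constant $CC'$.

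The main obstacle is the interaction between the maximal norm and the averaging over $g$: in the commutative case one simply integrates $\sup_n|A_n'f(g)|^p$ over $g\in F$, but in the noncommutative setting $\big\|{\sup_n}^+ \cdot\big\|_p$ is defined via a factorization $a y_n b$ and is not given by integrating a scalar supremum pointwise. The correct device is to regard the family $(\alpha_g A_n x)_{n}$, parametrized also by $g\in F$, as an element of $L_p\big(L_\infty(F)\bar\otimes\mathcal M;\ell_\infty\big)$ and to use that the operator $x\mapsto (\alpha_g x)_{g\in F}$, composed with the conditional-expectation-type averaging $\frac{1}{m(F)}\int_F\cdot\,dm(g)$, is positive and $L_p$-bounded; the positivity of each $\alpha_g$ is exactly what guarantees that a single dominating element $a\in L_p^+(\mathcal M)$ for the transferred family descends to a dominating element for $(A_nx)_n$. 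A secondary technical point is the boundary effect: one must ensure the "good" region inside $F$ has measure asymptotically equal to $m(F)$, which again is precisely the Følner property, and one must absorb the contributions of $\mu_n$ supported near the boundary — I would handle this by first proving the estimate for finitely many $n$ (so the relevant $\mu_n$ have uniformly controlled support) and then invoking Proposition \ref{prop:nc max funct}(1) to remove the finiteness restriction.
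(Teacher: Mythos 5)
Your proposal is correct and is essentially the paper's own argument: the same transfer function $f=\chi_{FK}\,\alpha_{\cdot}x$ supported on an enlarged F\o{}lner set, the same identity $A_n'f(g)=\alpha_g A_n x$ for $g\in F$, the use of positivity and uniform boundedness of $\alpha$ to pull the maximal norm back through $\alpha_{g^{-1}}$ (the paper cites \cite[Proposition 7.3]{haagerupjx10reduction} for exactly this), the reduction to finitely many $n$ with (truncated) compactly supported $\mu_n$, and the F\o{}lner limit $m(FK)/m(F)\to 1$. The only difference is cosmetic: where you propose applying a positive averaging map over $F$ to the $\ell_\infty$-valued family, the paper instead localizes a near-optimal factorization $A_n'f=aF_nb$ pointwise in $g$ and integrates via H\"older, but both devices rest on the same factorization definition of the maximal norm and the positivity hypothesis.
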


\begin{proof}
	Note that we may take an increasing net of compact subsets
	$K_{i}\subset G$ such that $\lim _{i}\mu _{n}(K_{i})=\mu _{n} (G)$. Then
	for
	\begin{equation*}
	A_{n,i}x=
	\int _{G}\alpha _{s}x \chi _{K_{i}}(s) \,d
	\mu _{n}(s),\quad x \in L_{p}(\mathcal{M}),n\geq 1,
	\end{equation*}
	we have
	\begin{equation*}
	A_{n,i}x\leq A_{n} x,\quad\quad
	\lim _{i\to \infty } \llVert A_{n} x - A_{n,i}x
	\rrVert _{p}=0, \quad x\in L_{p}^{+}(
	\mathcal{M}).
	\end{equation*}
	So for $x,y\in L_{p}^{+}(\mathcal{M})$,
	\begin{equation*}
	A_{n} x\leq y \quad \quad \text{iff}\quad \quad \forall i , \quad A_{n,i}x\leq
	y.
	\end{equation*}
	Hence
	$ \llVert  \sup _{n,i}^{+}A_{n,i}x_{n,i} \rrVert  _{p} = \llVert  \sup _{n}^{+}A_{n}x_{n} \rrVert  _{p}$.
	So without loss of generality we may assume that the $\mu _{n}$'s are of
	compact support.
	
	We fix $x\in L_{p}(\mathcal{M})$ and $N\geq 1$. Choose a compact
	subset $K\subset G$ such that $\mu _{n}$ is supported in $K$ for all
	$1\leq n\leq N$. Since
	$\alpha _{s}:L_{p}(\mathcal{M})\to L_{p}(
	\mathcal{M})$ is positive for all $s\in G$, we see that
	$(\alpha _{s}\otimes \mathrm{Id})_{s\in G}$ extends to a uniformly bounded
	family of maps on $L_{p}(\mathcal{M};\ell _{\infty })$ (see, e.g.,
	\cite[Proposition~7.3]{haagerupjx10reduction}). So we may choose a constant
	$C'>0$ such that
	\begin{equation*}
	\llVert \mathop{{\sup }^{+}}_{1\le n\le N}A_{n}x
	\rrVert _{p}= \llVert \mathop{{ \sup }^{+}}_{1\le n\le N}
	\alpha _{s^{-1}}\alpha _{s}A_{n}x \rrVert
	_{p} \leq C' \llVert \mathop{{\sup }^{+}}_{1\le n\le N}
	\alpha _{s}A_{n}x \rrVert _{p}, \quad s\in G.
	\end{equation*}
	Let $F$ be a compact subset. Then we have
	\begin{equation}
	\llVert \mathop{{\sup }^{+}}_{1\le n\le N}A_{n}x
	\rrVert _{p}^{p}\leq C^{\prime p}
	\frac{1}{m(F)}
	\int _{F} \llVert \mathop{{\sup }^{+}}_{1\le n\le N}
	\alpha _{s}A_{n}x \rrVert _{p}^{p}
	\,dm(s). \label{eq:alpha_g An p p}
	\end{equation}
	We define a function $f\in L_{p}(G;L_{p}(\mathcal{M}))$ as
	\begin{equation*}
	f(h)=\chi _{FK}(h)\alpha _{h}x,\quad h\in G.
	\end{equation*}
	Then for all $s\in F$,
	\begin{equation}
	\alpha _{s}A_{n}x=
	\int _{K}\alpha _{st}x\,d\mu _{n}(t)=
	\int _{K}f(st)\,d\mu _{n}(t)=A_{n}'f(s).
	\label{eq:transf alpha_g An}
	\end{equation}
	We consider
	$(A_{n}'f)_{1\leq n\leq N}\in L_{p}(L_{\infty }(G)\bar{\otimes }
	\mathcal{M};\ell _{\infty })$, and for any $\varepsilon >0$ we
	take a factorization $A_{n}'f=aF_{n}b$ such that
	$a,b\in L_{2p}(L_{\infty }(G)\bar{\otimes }\mathcal{M})$,
	$F_{n}\in L_{\infty }(G)\bar{\otimes }\mathcal{M}$, and
	\begin{equation*}
	\llVert a \rrVert _{2p}
	\sup _{1\leq n\leq N} \llVert F_{n} \rrVert _{\infty }
	\llVert b \rrVert _{2p}\leq \bigl\llVert (A_{n}'f)_{1\leq n\leq N}
	\bigr\rrVert _{L_{p}(L_{\infty }(G)
		\bar{\otimes }\mathcal{M};\ell _{\infty })}+\varepsilon .
	\end{equation*}
	Then we have
	\begin{align*}
	\int _{G} \bigl\llVert \mathop{{\sup }^{+}}_{1\le n\le N}A_{n}'f(s)
	\bigr\rrVert _{p}^{p}\,dm(s) &\leq
	\int _{G} \bigl\llVert a(s) \bigr\rrVert _{2p}^{p}
	\sup _{1\leq n\leq N} \bigl\llVert F_{n}(s) \bigr\rrVert
	_{
		\infty }^{p} \bigl\llVert b(s) \bigr\rrVert
	_{2p}^{p}\,dm(s)
	\\
	&\leq \llVert a \rrVert _{2p}^{p}
	\sup _{1\leq n\leq N} \llVert F_{n} \rrVert _{\infty }^{p}
	\llVert b \rrVert _{2p}^{p}
	\\
	&\leq \bigl( \bigl\llVert
	(A_{n}'f)_{1\leq n\leq N} \bigr\rrVert _{L_{p}(L_{\infty }(G)
		\bar{\otimes }\mathcal{M};\ell _{\infty })}+\varepsilon
	\bigr)^{p}.
	\end{align*}
	Since $\varepsilon $ is arbitrarily chosen, we obtain
	\begin{equation*}
	\int _{G} \bigl\llVert \mathop{{\sup }^{+}}_{1\le n\le N}A_{n}'f(s)
	\bigr\rrVert _{p}^{p}\,dm(s) \leq \llVert \mathop{{
			\sup }^{+}}_{1\le n\le N}A_{n}'f \rrVert
	_{p}^{p}.
	\end{equation*}
	Thus, together with \eqref{eq:alpha_g An p p},
	\eqref{eq:transf alpha_g An}, and the assumption, we see that
	\begin{align*}
	\llVert \mathop{{\sup }^{+}}_{1\le n\le N}A_{n}x
	\rrVert _{p}^{p} &\leq
	\frac{C^{\prime p}}{m(F)}
	\int _{F} \bigl\llVert \mathop{{\sup }^{+}}_{1\le n\le N}A_{n}'f(s)
	\bigr\rrVert _{p}^{p}\,dm(s)\leq
	\frac{C^{\prime p}}{m(F)} \llVert \mathop{{\sup }^{+}}_{1
		\le n\le N}A_{n}'f
	\rrVert _{p}^{p}
	\\
	&\leq
	\frac{C^{p}C^{\prime p}}{m(F)} \llVert f \rrVert _{p}^{p}=
	\frac{C^{p}C^{\prime p}}{m(F)}
	\int _{FK} \llVert \alpha _{h}x \rrVert
	_{p}^{p}\,dm(h)
	\\
	&\leq
	\frac{C^{p}C^{\prime p}m(FK)}{m(F)} \llVert x \rrVert _{p}^{p}.
	\end{align*}
	Since $G$ is amenable, for any $\varepsilon >0$ we may choose the above
	subset $F$ such that $m(FK)/m(F)\leq 1+\varepsilon $. Therefore, we obtain
	\begin{equation*}
	\llVert \mathop{{\sup }^{+}}_{1\le n\le N}A_{n}x
	\rrVert _{p}\leq CC'(1+ \varepsilon ) \llVert x \rrVert _{p}.
	\end{equation*}
	Note that $N$, $\varepsilon $, $x$ are all arbitrarily chosen, so we establish
	the theorem.%
\end{proof}
\begin{rem}
	\label{rem:rk strong transf}%
	Applying the same argument, we may obtain several variants of the above
	theorem.
	\begin{enumerate}
		\item[(1)] The sequence of measures $(\mu _{n})_{n\geq 1}$ can be replaced
		by any family $(\mu _{i})_{i\in I}$ of Radon probability measures for an
		arbitrary index set~$I$.
		\item[(2)] The positivity of the action $\alpha $ can be replaced by more
		general assumptions. It suffices to assume that
		\begin{equation*}
		\sup _{s\in G} \llVert \alpha _{s}\otimes \mathrm{Id}
		\rrVert _{B(L_{p}(\mathcal{M};
			\ell _{\infty }))}<\infty .
		\end{equation*}
		If $\mathcal{M}$ is commutative, then this is equivalent to saying
		that the operators $(\alpha _{s})_{s\in G}$ are regular with uniformly
		bounded regular norm (see \cite{meyernieberg91banachlattices}). In the
		noncommutative setting, one may assume that $(\alpha _{s})_{s\in G}$ are
		uniformly bounded decomposable maps, and we refer to
		\cite{jungeruan04decomposable} and \cite{pisier95regular} for more details.
		\item[(3)] One may also state similar properties for transference of linear
		operators; in this case the assumption on positivity of $\alpha $ can be
		ignored, and the semigroup actions can be included. We have the following
		noncommutative analogue of the transference result in
		\cite[Theorem~2.4]{coifmanweiss76transference}. Assume that $G$ and
		$\alpha $ satisfy one of the following conditions:
		\begin{enumerate}
			\item[(a)] $G$ is an amenable locally compact group, and $\alpha $ satisfies
			$(\mathbf{A} ^{p}_{1})$ and $(\mathbf{A} ^{p}_{2})$;
			\item[(b)] $G$ is a discrete amenable semigroup or
			$G=\mathbb{R}_{+}$, $\alpha $ satisfies $(\mathbf{A} ^{p}_{1})$, and each
			$\alpha _{s}$ is an isometry on $L_{p}(\mathcal{M})$ (or more generally,
			there exist $K_{1},K_{2}>0$ such that for all $s\in G$, we have
			$K_{1} \llVert  x \rrVert  _{p}\leq  \llVert  \alpha _{s}x \rrVert  _{p}\leq K_{2} \llVert  x \rrVert  _{p}$).
		\end{enumerate}
		Let $\mu $ be a bounded Radon measure on~$G$. Define
		\begin{equation*}
		T_{\mu }(f) (s)=
		\int _{G}f(st)\,d\mu (t),\quad f\in L_{p}(G), s\in
		G,
		\end{equation*}
		and
		\begin{equation*}
		\widetilde{T}_{\mu }(x)=
		\int _{G}\alpha _{s}x\,d\mu (s),\quad x\in
		L_{p}( \mathcal{M}).
		\end{equation*}
		Then we have
		\begin{equation*}
		\llVert \widetilde{T}_{\mu } \rrVert _{B(L_{p}(\mathcal{M}))}\leq
		\sup _{s
			\in G} \llVert \alpha _{s} \rrVert
		_{B(L_{p}(\mathcal{M}))} \llVert T_{\mu } \otimes \mathrm{Id} \rrVert
		_{B(L_{p}(L_{\infty }(G)\bar{\otimes }
			\mathcal{M}))}.
		\end{equation*}
	\end{enumerate}
\end{rem}

\subsection{Weak type inequalities}
\label{sec3.2}

Now we discuss the transference principle for weak type $(p,p)$ inequalities.
In this case we will only consider the special case of group actions on
von Neumann algebras. We assume that $\alpha $ is given by an action on
$\mathcal{M}$ satisfying the condition $(\mathbf{A} ')$ in Section~\ref{sub:Actions-by-amenable}.
\begin{thm}
	\label{thm:transference weak}%
	Assume that $G$ is amenable. Let $(A_{n})_{n\geq 1}$ and
	$(A_{n}')_{n\geq 1}$ be the associated sequences of maps given in
	\eqref{eq:average mu_n} and \eqref{eq:op conv mu_n}. Fix
	$1\leq p<\infty $. If the sequence $(A_{n}')_{n\geq 1}$ is of weak type
	$(p,p)$, then $(A_{n})_{n\geq 1}$ is of weak type $(p,p)$ too.%
\end{thm}
\begin{proof}
	As in the last subsection, we may assume without loss of generality that
	$\mu _{n}$ is of compact support. Assume that the sequence
	$(A_{n}')_{n\geq 1}$ is of weak type $(p,p)$ with constant~$C$. By Lemma~\ref{lem:reduction finite weak}, it suffices to show that there exists
	a constant $C'>0$ such that for all
	$\lambda >0$, $x\in L_{p}^{+}(\mathcal{M})$, $N\geq 1$, there exists
	a projection $e\in \mathcal{M}$ such that
	\begin{equation*}
	\tau (e^{\bot })\leq
	\frac{C^{\prime p}}{\lambda ^{p}} \llVert x \rrVert _{p}^{p},\quad
	\quad e(A_{n}x)e \leq \lambda ,\quad 1\leq n\leq N.
	\end{equation*}
	We fix $\lambda >0$, $x\in L_{p}(\mathcal{M})$, and $N\geq 1$. Choose
	a compact subset $K\subset G$ such that $\mu _{n}$ is supported in
	$K$ for all $1\leq n\leq N$. Let $F$ be a compact subset. We define a function
	$f\in L_{p}(G;L_{p}(\mathcal{M}))$ as
	\begin{equation*}
	f(t)=\chi _{FK}(t)\alpha _{t}x,\quad t\in G.
	\end{equation*}
	Then for all $s\in F$,
	\begin{equation}
	\alpha _{s}A_{n}x=
	\int _{K}\alpha _{st}x\,d\mu _{n}(t)=
	\int _{K}f(st)\,d\mu _{n}(t)=A_{n}'f(s).
	\label{eq:transf alpha_g An-1}
	\end{equation}
	Since the sequence $(A_{n}')_{n\geq 1}$ is of weak type $(p,p)$ with constant
	$C$, we may choose a projection
	$e\in L_{\infty }(G)\bar{\otimes }\mathcal{M}$ such that
	\begin{equation*}
	\int _{G}\tau \bigl(e(s)^{\bot } \bigr)\,ds
	\leq \Bigl(C
	\frac{ \llVert  f \rrVert  _{L_{p}(G;L_{p}(\mathcal{M}))}}{\lambda } \Bigr)^{p} \quad\quad \text{and}\quad\quad
	e(A_{n}'f)e\leq \lambda ,\quad 1\leq n\leq N,
	\end{equation*}
	where we regard $e$ as a measurable operator-valued function on~$G$. Therefore,
	by \eqref{eq:transf alpha_g An-1} for $s\in G$, we have
	\begin{equation}
	\bigl(\alpha _{s^{-1}}e(s) \bigr) (A_{n}x) \bigl(\alpha
	_{s^{-1}}e(s) \bigr)\leq \lambda , \quad n\geq 1. \label{eq:a.s. lambda bdd}
	\end{equation}
	Recall that each $\alpha _{s^{-1}}$ is a unital $\tau $-preserving automorphism
	of $\mathcal{M}$. In particular, for an arbitrary
	$\varepsilon >0$, we may choose $s_{0}\in G$ and a projection
	$\tilde{e}\coloneqq \alpha _{s_{0}^{-1}}e(s_{0})\in
	\mathcal{M}$ such that
	\begin{equation*}
	\tau (\tilde{e}^{\bot })\leq
	\inf _{s\in G}\tau \bigl(e(s)^{\bot } \bigr)+
	\varepsilon \quad\quad \text{and}\quad\quad \tilde{e}(A_{n}x)\tilde{e}
	\leq \lambda ,\quad n\geq 1.
	\end{equation*}
	Then we have
	\begin{align*}
	\tau (\tilde{e}^{\bot }) &\leq
	\frac{1}{m(F)}
	\int _{F}\tau \bigl(e(s)^{
		\bot } \bigr)\,ds+
	\varepsilon \leq
	\frac{C^{p}}{\lambda ^{p}m(F)} \llVert f \rrVert _{L_{p}(G;L_{p}(
		\mathcal{M}))}^{p}+
	\varepsilon
	\\
	&=
	\frac{C^{p}}{\lambda ^{p}m(F)}
	\int _{FK} \llVert \alpha _{h}x \rrVert
	_{p}^{p}\,dm(h)+ \varepsilon
	\\
	&\leq
	\frac{C^{p}m(FK)}{\lambda ^{p}m(F)} \llVert x \rrVert _{p}^{p}+
	\varepsilon .
	\end{align*}
	Since $G$ is amenable, for any $\varepsilon >0$ we may choose the above
	subset $F$ such that $m(FK)/m(F)\leq 1+\varepsilon $. Therefore, we obtain
	\begin{equation*}
	\tau (\tilde{e}^{\bot })\leq
	\frac{C^{p}(1+\varepsilon )}{\lambda ^{p}} \llVert x \rrVert _{p}^{p}+
	\varepsilon , \quad\quad \tilde{e}(A_{n}x)\tilde{e}\leq \lambda ,\quad 1
	\leq n\leq N.
	\end{equation*}
	Note that $N$, $\varepsilon $, $x$ are all arbitrarily chosen, so we establish
	the theorem.%
\end{proof}
\begin{rem}
	We remark that the above result also holds for other index sets. For example,
	$(\mu _{n})_{n\geq 1}$ can be replaced by a one-parameter family
	$(\mu _{r})_{r\in \mathbb{R}_{+}}$ such that $r\mapsto A_{r}x$ is continuous
	for all $x\in L_{p}(\mathcal{M})$. The only ingredient needed
	in the proof is that the condition \eqref{eq:a.s. lambda bdd} hold true
	almost everywhere on~$G$.
\end{rem}

\section{Maximal inequalities: Probabilistic approach}
\label{sect:max}

This section is devoted to the proof of the maximal inequalities in Theorem~\ref{thm:main intro}. To this end, we will first establish noncommutative
Hardy--Littlewood maximal inequalities on doubling metric spaces.

\subsection{Hardy--Littlewood maximal inequalities on metric measure spaces}
\label{sec4.1}

Throughout, a metric measure space $(X,d,\mu )$ refers to a metric space
$(X,d)$ equipped with a Radon measure~$\mu$. We denote
$B(x,r)=\{y\in X:d(y,x)\leq r\}$, and we say that $\mu $ satisfies the
\emph{doubling condition} if there exists a constant $K>0$ such that
\begin{equation}
\label{eq:measure doubling} \forall r>0,x\in X,\quad \mu \bigl(B(x,2r) \bigr)\leq K\mu
\bigl(B(x,r) \bigr).
\end{equation}
In the sequel, we always assume the nondegeneracy property
$0<\mu (B(x,r))<\infty $ for all $r>0$. The following theorem can be regarded
as an operator-valued analogue of the Hardy--Littlewood maximal inequalities.
\begin{thm}
	\label{thm:hl max}%
	Let $(X,d,\mu )$ be a metric measure space. Suppose that $\mu $ satisfies
	the doubling condition. Let $1\leq p<\infty $, and let $A_{r}$ be the averaging
	operators
	\begin{equation*}
	A_{r}f(x)=
	\frac{1}{\mu (B(x,r))}
	\int _{B(x,r)}f\,d\mu ,\quad f\in L_{p}
	\bigl(X;L_{p}( \mathcal{M}) \bigr),x\in X,r>0.
	\end{equation*}
	Then $(A_{r})_{r\in \mathbb{R}_{+}}$ is of weak type $(1,1)$ and of strong
	type $(p,p)$ for $1<p<\infty $.
\end{thm}

The key ingredient of the proof is the following construction of dyadic
systems of metric measure spaces, which is established in
\cite[Corollary~7.4]{hytonenkairema12dyadic}.
\begin{lem}
	\label{lem:existence non random partition}%
	Let $(X,d)$ be a metric space, and let $\mu $ be a Radon measure on
	$X$ satisfying the doubling condition. Then there exists a finite collection
	of families
	$\mathscr{P}^{1},\mathscr{P}^{2},\ldots,\mathscr{P}^{N}$, where each
	$\mathscr{P}^{i}\coloneqq (\mathscr{P}_{k}^{i})_{k\in \mathbb{Z}}$ is a
	sequence of partitions of $X$, such that the following conditions hold
	true:
	\begin{enumerate}
		\item[(1)] for each $1\leq i\leq N$ and for each $k\in \mathbb{Z}$, the
		partition $\mathscr{P}_{k+1}^{i}$ is a refinement of the partition
		$\mathscr{P}_{k}^{i}$;
		\item[(2)] there exists a constant $C>0$ such that for all $x\in X$ and
		$r>0$, there exist $1\leq i\leq N$, $k\in \mathbb{Z}$ and an element
		$Q\in \mathscr{P}_{k}^{i}$ such that
		\begin{equation*}
		B(x,r)\subset Q,\quad\quad \mu (Q)\leq C\mu \bigl(B(x,r) \bigr).
		\end{equation*}
	\end{enumerate}
\end{lem}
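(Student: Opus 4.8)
The plan is to follow the construction of dyadic systems on doubling metric spaces due to Christ and, in the precise form needed here, to Hytönen and Kairema \cite{hytonenkairema12dyadic}: the families $\mathscr{P}^{1},\dots,\mathscr{P}^{N}$ will be finitely many \emph{adjacent dyadic systems}, and condition (2) is the metric analogue of the classical one-third trick, which says that every interval is contained in a comparable dyadic interval of one of three shifted grids. Throughout I fix a small parameter $\delta\in(0,1)$, to be chosen depending only on the doubling constant $K$ of \eqref{eq:measure doubling}, and I index the partitions so that a generic cube at scale $k$ has diameter of order $\delta^{k}$.

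\emph{Step 1: a single dyadic system.} For each $k\in\mathbb{Z}$ I select a maximal $\delta^{k}$-separated set of points $\{z_{\alpha}^{k}\}_{\alpha}\subset X$; maximality guarantees that the balls $B(z_{\alpha}^{k},\delta^{k})$ cover $X$ while the balls $B(z_{\alpha}^{k},\delta^{k}/2)$ are pairwise disjoint. I then fix a parent map sending each center $z_{\beta}^{k+1}$ to a nearest center at scale $k$, and assign every point of $X$ to a center at each scale in a manner compatible with this parent map. This produces cubes $Q_{\alpha}^{k}$ forming a partition $\mathscr{P}_{k}$ of $X$ satisfying the refinement relation $\mathscr{P}_{k+1}\preceq\mathscr{P}_{k}$ of condition (1), together with the two-sided containment
\[
B(z_{\alpha}^{k},c_{1}\delta^{k})\subset Q_{\alpha}^{k}\subset B(z_{\alpha}^{k},C_{1}\delta^{k})
\]
for constants $0<c_{1}\le C_{1}$ depending only on $\delta$. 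The verification that the assignment can be chosen simultaneously nested and ball-comparable is the technical heart of Christ's construction, which I would import from \cite{hytonenkairema12dyadic}.

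\emph{Step 2: measure comparability.} The doubling condition \eqref{eq:measure doubling} turns the geometric comparison of Step 1 into a measure comparison: since $Q_{\alpha}^{k}$ lies between two concentric balls of radii comparable to $\delta^{k}$, iterating \eqref{eq:measure doubling} finitely many times gives $\mu(Q_{\alpha}^{k})\approx\mu(B(z_{\alpha}^{k},\delta^{k}))$ with constants independent of $k$ and $\alpha$. More generally, if a ball $B(x,r)$ is contained in a cube $Q$ of diameter at most $C'r$, then $Q\subset B(x,(1+C')r)$, so a fixed number of applications of doubling yields $\mu(Q)\le C\mu(B(x,r))$ — exactly the bound demanded in condition (2).

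\emph{Step 3: adjacent systems and conclusion.} A single system does not suffice for condition (2): a ball $B(x,r)$ with $r\approx\delta^{k}$ may straddle the boundary between several cubes of $\mathscr{P}_{k}$ and so lie in no cube of comparable size. To repair this I build $N$ systems $\mathscr{P}^{1},\dots,\mathscr{P}^{N}$ from finitely many different choices of the reference nets, arranged so that for every $x\in X$ and every scale $j$ at least one system places $x$ \emph{well inside} a cube at scale $j$, i.e.\ at distance $\gtrsim\delta^{j}$ from its complement. \textbf{This is the main obstacle:} one must show that finitely many systems already achieve this and that $N$ is bounded solely in terms of $K$. The key point is that by doubling only boundedly many centers lie within distance $\delta^{k}$ of any given point, so a pigeonhole argument over the finitely many possible local configurations produces the required list of adjacent systems; this is precisely the content of \cite[Corollary 7.4]{hytonenkairema12dyadic}. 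Granting it, condition (2) follows: given $(x,r)$ choose $k$ with $\delta^{k}\le r<\delta^{k-1}$ and fix $L$, depending only on $c_{1},C_{1},\delta$, so large that the well-inside radius $\sim\delta^{k-L}$ exceeds $r$; applying the adjacent-system property at scale $j=k-L$ yields a system $i$ and a cube $Q\in\mathscr{P}_{k-L}^{i}$ with $B(x,r)\subset Q$, while $\mathrm{diam}(Q)\lesssim\delta^{k-L}\lesssim r$, and Step 2 gives $\mu(Q)\le C\mu(B(x,r))$.
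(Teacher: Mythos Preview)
Your proposal is correct and follows exactly the route the paper takes: the paper does not give an independent proof of this lemma but simply invokes \cite[Corollary~7.4]{hytonenkairema12dyadic}, and your sketch is precisely an outline of the Christ/Hyt\"onen--Kairema adjacent-dyadic-systems construction culminating in that same corollary. Your Steps~1--3 faithfully summarize the content of that reference, so there is nothing to add.
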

\begin{rem}
	The lemma dates back to the construction of dyadic systems in the case
	of $X=\mathbb{R}^{d}$, which is due to Mei \cite{mei03dyadic,mei07hardy} (see also \cite{pisier16chapterbmo}). We remark that Mei's construction also works for the
	discrete space $\mathbb{Z}^{d}$ as follows. For $0\leq i\leq d$ and
	$k\geq 0$, we set $\tilde{\mathscr{P}}_{-k}^{i}$ to be the following family
	of intervals in $\mathbb{Z}$:
	\begin{equation*}
	\tilde{\mathscr{P}}_{-k}^{i}= \bigl\{[\alpha
	_{k}^{(i)}+m2^{k},\alpha
	_{k}^{(i)}+(m+1)2^{k}) \cap
	\mathbb{Z}:m\geq 1 \bigr\},
	\end{equation*}
	where $\alpha _{k}^{(i)}=\sum _{j=0}^{k-1}2^{j}\xi _{j}^{(i)}$ modulo
	$2^{k}$, with
	\begin{equation*}
	\xi _{(d+1)n+l}^{(i)}=\delta _{i,l},\quad n\geq
	0,0\leq l\leq d.
	\end{equation*}
	And we set $\tilde{\mathscr{P}}_{k}^{i}=\tilde{\mathscr{P}}_{0}^{i}$ for
	all $k\geq 0$. Consider the usual word metric $d$ and the counting measure
	$\mu $ on $X=\mathbb{Z}^{d}$. Then the partitions
	\begin{equation*}
	\mathscr{P}_{k}^{i}\coloneqq (\tilde{
		\mathscr{P}}_{k}^{i} )^{d},\quad
	k\in \mathbb{Z},0\leq i\leq d
	\end{equation*}
	satisfy the conditions in Lemma~\ref{lem:existence non random partition}, with constant
	$C\leq 2^{3d(d+2)}$.
\end{rem}
\begin{proof}[Proof of Theorem~\ref{thm:hl max}]
	Let $\Sigma $ be the $\sigma $-algebra of Borel sets on~$X$. For
	$1\leq i\leq N$ and $k\in \mathbb{Z}$, we define
	$\Sigma _{k}^{i}\subset \Sigma $ to be the $\sigma $-subalgebra generated
	by the elements of $\mathscr{P}_{k}^{i}$. Denote by
	$\mathbb{E}_{k}^{i}$ the conditional expectation from
	$L_{\infty }(X,\Sigma ,\mu )\bar{\otimes }\mathcal{M}$ to
	$L_{\infty }(X,\Sigma _{k}^{i},\mu| _{\Sigma _{k}^{i}})\bar{\otimes }
	\mathcal{M}$. For each $x\in X$, let
	$\mathscr{P}_{k}^{i}(x)$ be the unique element of
	$\mathscr{P}_{k}^{i}$ which contains~$x$. Then we have
	\begin{equation*}
	\mathbb{E}_{k}^{i}g(x)=
	\frac{1}{\mu (\mathscr{P}_{k}^{i}(x))}
	\int _{
		\mathscr{P}_{k}^{i}(x)}g\,d\mu ,\quad x\in X,g\in L_{\infty }(X,{\Sigma
	}, \mu )\bar{\otimes }\mathcal{M}.
	\end{equation*}
	By Lemma~\ref{lem:doob}, there exists a constant $C>0$ such that for
	$\lambda >0$ and $f\in L_{1}^{+}(X;L_{1}(\mathcal{M}))$, there
	exists a projection
	$e_{i}\in L_{\infty }(X,\Sigma ,\mu )\bar{\otimes }
	\mathcal{M}$ satisfying
	\begin{equation*}
	\tau (e_{i}^{\bot })\leq
	\frac{C}{\lambda } \llVert f \rrVert _{L_{1}(X;L_{1}(
		\mathcal{M}))},\quad\quad e_{i}(
	\mathbb{E}_{k}^{i}f)e_{i}\leq \lambda ,
	\quad k\in \mathbb{Z}.
	\end{equation*}
	Take $e=\bigwedge _{i}e_{i}$ to be the infimum of
	$(e_{i})_{i=1}^{N}$, that is, the projection onto
	$\bigcap _{i}e_{i}H$ ($H$ being the Hilbert space on which
	$\mathcal{M}$ acts). Note that
	$(\bigwedge _{i}e_{i})^{\bot }\leq \sum _{i}e_{i}^{\bot }$. Then we have
	\begin{equation}
	\tau (e^{\bot })\leq
	\frac{CN}{\lambda } \llVert f \rrVert _{L_{1}(X;L_{1}(
		\mathcal{M}))},\quad\quad e(
	\mathbb{E}_{k}^{i}f)e \leq \lambda , \quad 1\leq i
	\leq N,k\in \mathbb{Z}. \label{eq:pf max martingale}
	\end{equation}
	By Lemma~\ref{lem:existence non random partition}, there exists a constant
	$C'>0$ such that for each $x\in X$ and $r>0$, there exist
	$1\leq i\leq N$ and $k\in \mathbb{Z}$ such that
	$B(x,r)\subset \mathscr{P}_{k}^{i}(x)$, $\mu (\mathscr{P}_{k}^{i}(x))
	\leq C'\mu (B(x,r))$; in particular,
	\begin{equation}
	\label{eq:dom martingale}\begin{aligned}[b]
	A_{r}f(x)&=
	\frac{1}{\mu (B(x,r))}
	\int _{B(x,r)}f\,d\mu \leq C'
	\frac{1}{\mu (\mathscr{P}_{k}^{i}(x))}
	\int _{\mathscr{P}_{k}^{i}(x)}fd \mu
	\\
	&=C'\mathbb{E}_{k}^{i}f(x).
	\end{aligned}
	\end{equation}
	Then together with \eqref{eq:pf max martingale} we see that
	\begin{equation*}
	e(A_{r}f)e\leq C'\lambda ,\quad r>0.
	\end{equation*}
	Therefore, $(A_{r})_{r>0}$ is of weak type $(1,1)$.
	
	On the other hand, for $1<p<\infty $, according to the proof of
	\eqref{eq:dom martingale}, we have for
	$f\in L_{p}^{+}(X;L_{p}(\mathcal{M}))$,
	\begin{align*}
	\bigl\llVert (A_{r} f)_{r>0} \bigr\rrVert _{L_{p}(\mathcal{M} \bar{\otimes }L_{\infty }(X);\ell _{\infty })} &
	\leq C' \Bigl\llVert \Bigl(
	\sum _{i=1}^{N}\mathbb{E}_{k}^{i}f
	\Bigr)_{k\in
		\mathbb{Z}} \Bigr\rrVert _{L_{p}(\mathcal{M} \bar{\otimes }L_{\infty }(X);\ell _{\infty })}
	\\
	&\leq C'
	\sum _{i=1}^{N} \bigl\llVert (
	\mathbb{E}_{k}^{i}f)_{k\in \mathbb{Z}} \bigr\rrVert
	_{L_{p}(
		\mathcal{M} \bar{\otimes }L_{\infty }(X);\ell _{\infty })}.
	\end{align*}
	Since each $(\mathbb{E}_{k}^{i})_{k\in \mathbb{Z}}$ on the right-hand side
	is of strong type $(p,p)$ by Lemma~\ref{lem:doob}, we see that
	$(A_{r} )_{r\in \mathbb{R} _{+}}$ is of strong type $(p,p)$, as desired.
\end{proof}

\begin{rem}
	There is another approach to random dyadic systems of metric measure spaces,
	which is proved by Naor and Tao
	\cite[Lemma~3.1]{naortao10randommartingale}. The construction is motivated
	by the study of Hardy--Littlewood maximal inequalities on large-dimensional
	doubling spaces. Their result replaces the families
	$\mathscr{P}^{1},\mathscr{P}^{2},\ldots,\mathscr{P}^{N}$ in Lemma~\ref{lem:existence non random partition} by an infinite random collection
	$(\mathscr{P}^{\omega })_{\omega \in \Omega }$ for a probability space
	$(\Omega ,P)$, and assumes a positive probability for the coverings of
	balls. In this case we may find a random family of martingales
	$\{(\mathbb{E} _{k}^{\omega })_{k\in \mathbb{Z}},\omega \in \Omega \}$ such
	that for some $k:\mathbb{R} _{+}\to \mathbb{Z}$ and for some fixed constant
	$c$, we have
	\begin{equation*}
	A_{r}f\leq c
	\int _{\Omega }\mathbb{E}_{k(r)}^{\omega }f(x)
	\,dP( \omega ), \quad f\in L_{p}^{+}
	\bigl(X;L_{p}( \mathcal{M}) \bigr).
	\end{equation*}
	This yields as well the strong type $(p,p)$ inequalities of
	$(A_{r})_{r\in \mathbb{R} _{+}}$ for $1<p<\infty $. We omit the details.
\end{rem}

\subsection{Maximal ergodic inequalities}
\label{sec4.2}

Based on the previous result, we are now ready to deduce the following
maximal ergodic theorems. We say that a metric $d$ on $G$ is
\emph{invariant} if $d(e,g)=d(h,hg)$ for all $g,h\in G$. We denote
$B_{r}=B(e,r)$ for $r>0$. As before we consider an action $\alpha $ on
$L_{p}(\mathcal{M})$ satisfying the conditions $(\mathbf{A} ^{p}_{1})$--$(
\mathbf{A} ^{p}_{3})$ for a fixed $p$ in Section~\ref{sub:Actions-by-amenable}. The following result establishes the maximal
inequalities in Theorem~\ref{thm:main intro}.
\begin{thm}
	\label{thm:max doubling}%
	Let $G$ be an amenable locally compact group, and let $d$ be an invariant
	metric on~$G$. Assume that $(G,d)$ satisfies the doubling condition
	\eqref{eq:doubling def}. Fix $1\leq p<\infty $. Let $A_{r}$ be the averaging
	operators
	\begin{equation*}
	A_{r}x=
	\frac{1}{m(B_{r})}
	\int _{B_{r}}\alpha _{s}x\,dm(s),\quad x\in
	L_{p}( \mathcal{M}),r>0.
	\end{equation*}
	Then $(A_{r})_{r>0}$ is of strong type $(p,p)$ if $1<p<\infty $. If, in
	addition, $\alpha $ satisfies the condition \emph{$(\mathbf{A} ')$}, then
	$(A_{r})_{r>0}$ is of weak type $(1,1)$.%
\end{thm}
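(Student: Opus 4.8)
The plan is to assemble the two principal ingredients already established: the noncommutative transference principle (Theorem \ref{thm:transference} for strong type and Theorem \ref{thm:transference weak} for weak type) reduces the maximal inequality for the ergodic averages $A_r$ to the corresponding inequality for the translation averages $A_r'$ on $L_p(G;L_p(\mathcal M))$, and the latter I will identify with the Hardy--Littlewood maximal operator on the metric measure space $(G,d,m)$, to which Theorem \ref{thm:hl max } applies. Concretely, set $\mu_r=\frac{1}{m(B_r)}\chi_{B_r}\,dm$, a Radon probability measure once we record that $0<m(B_r)<\infty$ (a standing consequence of local finiteness of $m$ and the doubling hypothesis). Then $A_r x=\int_G\alpha_g x\,d\mu_r(g)$ is of the form \eqref{eq:average mu_n}, and the transferred operator \eqref{eq:op conv mu_n} is
\[
A_r'f(g)=\frac1{m(B_r)}\int_{B_r}f(gh)\,dm(h),\qquad f\in L_p(G;L_p(\mathcal M)).
\]

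The crux is the observation that $(G,d,m)$ is a doubling metric measure space and that $A_r'$ is exactly its Hardy--Littlewood average. Since $d$ is invariant one has $d(a,b)=d(e,a^{-1}b)$, whence $B(g,r)=gB_r$. Writing $\delta\colon G\to\mathbb R_{>0}$ for the continuous homomorphism governing left translates of the right Haar measure, so that $m(gS)=\delta(g)\,m(S)$, we get
\[
\frac{m(B(g,2r))}{m(B(g,r))}=\frac{\delta(g)\,m(B_{2r})}{\delta(g)\,m(B_r)}=\frac{m(B_{2r})}{m(B_r)}\le C
\]
by \eqref{eq:doubling def}: the modular factor cancels, so $(G,d,m)$ satisfies \eqref{eq:measure doubling} with the same constant, whether or not $G$ is unimodular. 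The same cancellation, applied to the change of variables $k=gh$ in the integral defining $A_r'$, converts $\frac1{m(B_r)}\int_{B_r}f(gh)\,dm(h)$ into $\frac1{m(B(g,r))}\int_{B(g,r)}f\,dm$; that is, $A_r'$ is precisely the averaging operator of Theorem \ref{thm:hl max } on $X=G$.

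With this identification, Theorem \ref{thm:hl max } shows that $(A_r')_{r>0}$ is of strong type $(p,p)$ for $1<p<\infty$ and of weak type $(1,1)$ on $L_p(G;L_p(\mathcal M))$. For the strong type I invoke Theorem \ref{thm:transference} in its one-parameter form (Remark \ref{rem:rk strong transf}(1)), which under $(\mathbf A ^p_1)$--$(\mathbf A ^p_3)$ transfers the bound to $(A_r)_{r>0}$. For the weak type $(1,1)$ I use Theorem \ref{thm:transference weak}, which requires $\alpha$ to be induced by a trace-preserving action on $\mathcal M$, i.e.\ condition $(\mathbf A ')$. To pass from the sequential statement to the continuous parameter $r\in\mathbb R_+$ while avoiding any continuity assumption on $r\mapsto A_rx$, I reduce via Lemma \ref{lem:reduction finite weak} to finite sets of radii: for each finite $J\subset\mathbb R_+$ the weak-$(1,1)$ bound for $(A_r')_{r\in J}$ (a consequence of Theorem \ref{thm:hl max }) transfers to $(A_r)_{r\in J}$ by the finite-index argument inside the proof of Theorem \ref{thm:transference weak}, and Lemma \ref{lem:reduction finite weak} then yields weak type $(1,1)$ for the full family $(A_r)_{r>0}$.

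Since the analytic substance --- the maximal inequality itself --- is carried entirely by Theorem \ref{thm:hl max } (through the noncommutative Doob inequality, Lemma \ref{lem:doob}) and by the transference theorems, the genuine work here is the bookkeeping of the preceding two paragraphs: verifying that $(G,d,m)$ is doubling and that $A_r'$ is literally its Hardy--Littlewood average. I expect the main subtlety to be the correct handling of the modular function $\delta$ for non-unimodular $G$, together with the minor measure-theoretic points (positivity and finiteness of $m(B_r)$, and the reduction to finite families of radii that circumvents the possible failure of continuity of $r\mapsto A_rx$ when balls $\{d(g,e)=r\}$ carry positive measure).
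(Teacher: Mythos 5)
Your proposal is correct and follows essentially the same route as the paper's proof: the transference theorems (Theorem \ref{thm:transference} and Theorem \ref{thm:transference weak}, with Lemma \ref{lem:reduction finite weak} handling the continuous family of radii) reduce the problem to the translation averages $A_r'$ on $L_p(G;L_p(\mathcal{M}))$, which are then identified with the Hardy--Littlewood averages on the doubling space $(G,d,m)$ and controlled by Theorem \ref{thm:hl max }. The single point of divergence is the treatment of the modular function: the paper simply notes that the doubling condition \eqref{eq:doubling def} forces $G$ to be unimodular (citing \cite{calderon53erg}), so that $m(B(g,r))=m(gB_r)=m(B_r)$ outright, whereas you carry the modular factor $\delta(g)$ through both the doubling ratio and the change of variables $k=gh$ and verify that it cancels --- both arguments are valid, and yours has the minor merit of not needing the unimodularity fact at all.
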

\begin{proof}
	By Theorems~\ref{thm:transference} and~\ref{thm:transference weak} and
	the remarks following them, it suffices to prove the maximal inequalities
	for the averaging operators
	\begin{equation*}
	A_{r}'f(s)=
	\frac{1}{m(B_{r})}
	\int _{B_{r}}f(st)\,dm(t),\quad f\in L_{p}
	\bigl(G;L_{p}( \mathcal{M}) \bigr),s\in G,r>0.
	\end{equation*}
	Since the condition \eqref{eq:doubling def} holds, $G$ must be unimodular
	(see, e.g., \cite{calderon53erg}). In other words, the measure $m$ is also
	invariant under left translation. Note that $sB_{r} =B(s ,r)$ by the invariance
	of~$d$. Thus $m(B_{r})=m(sB_{r})=m(B(s ,r))$ and
	\begin{equation*}
	A_{r}'f(s)=
	\frac{1}{m(B(s ,r))}
	\int _{B(s ,r)} f(t)\,dm(t).
	\end{equation*}
	And by Theorem~\ref{thm:hl max}, the right-hand side is of weak type
	$(1,1)$ and of strong type $(p,p)$ for $1<p<\infty $. Thus
	$(A_{r}')_{r>0}$ is of weak type $(1,1)$ and of strong type $(p,p)$ for
	$1<p<\infty $ as well. The theorem is proved.%
\end{proof}

\begin{example}%
	\label{ex:max doubling}
	The theorem comprises noncommutative
	variants of classical results due to Wiener \cite{weiner39ergodic}, Calder\'{o}n
	\cite{calderon53erg}, and Nevo \cite{nevo06surveyerg}. If $G$ is a compactly
	generated group of polynomial growth, then the theorem applies to a large
	class of invariant metrics on $G$, such as distance functions derived from
	an invariant Riemann metric or word metrics. We refer to
	\cite[Sections~4 and~5]{nevo06surveyerg} for more examples.\nocite{pisier16chapterbmo}
	Here, we list several typical examples satisfying the doubling condition.
	\begin{enumerate}
		\item[(1)] Let $G$ be a compactly generated group of polynomial growth,
		and let $V$ be a symmetric compact generating subset. The
		\emph{word metric} defined by
		\begin{equation*}
		\forall s,t\in G,\quad d(s,t)=\inf \{n\in \mathbb{N}, s^{-1}h
		\in V^{n} \}
		\end{equation*}
		satisfies \eqref{eq:doubling def} and \eqref{eq:asymp inv}. Note that the
		integer groups and finitely generated nilpotent groups are of polynomial
		growth.
		\begin{enumerate}
			\item[(i)] The averaging operators
			\begin{equation*}
			A_{n}x=
			\frac{1}{m(V^{n})}
			\int _{V^{n}}\alpha _{s}x\,dm(s),\quad x\in
			L_{p}( \mathcal{M}),n\in \mathbb{N}
			\end{equation*}
			are of strong type $(p,p)$ if $1<p<\infty $. If, in addition,
			$\alpha $ satisfies the condition \emph{$(\mathbf{A} ')$}, then
			$(A_{n})_{n\geq 1}$ is of weak type $(1,1)$. This in particular establishes
			the maximal inequalities in Theorem~\ref{thm:main poly}.
			\item[(ii)] Let $T:L_{p}(\mathcal{M}) \to L_{p}(\mathcal{M})$ be a positive
			invertible operator with positive inverse such that
			$\sup _{n\in \mathbb{Z}} \llVert  T^{n} \rrVert  <\infty $. Then\vspace*{-2.5pt}
			\begin{equation*}
			A_{n}=
			\frac{1}{2n+1}
			\sum _{k=-n}^{n}T^{k},\quad
			n\in \mathbb{N}
			\end{equation*}
			is of strong type $(p,p)$ if $1<p<\infty $. If $T$ is an automorphism of
			$\mathcal{M}$ which leaves $\tau $ invariant, then
			$(A_{n})_{n\geq 1}$ is of weak type $(1,1)$.
			\item[(2)] Let $G$ be a compactly generated group of polynomial growth,
			and let $d$ be a metric on~$G$. If $d$ is invariant under a cocompact subgroup
			of $G$ and if $d$ satisfies a weak kind of the existence of geodesics axiom
			(see \cite[Definition~4.1]{breuillard14polygrowth}), then $(G,d)$ satisfies
			\eqref{eq:doubling def} and \eqref{eq:asymp inv}.\vspace*{-2.5pt}
		\end{enumerate}
	\end{enumerate}
\end{example}
We remark that a natural generalization of the doubling condition
\eqref{eq:doubling def} is given by Tempelman
\cite{tempelman67doubling} as follows. A~sequence
$(F_{k})_{k\geq 1}$ of sets of finite measure in $G$ satisfies
\emph{Tempelman's regular condition} if\vspace*{-2.5pt}
\begin{equation*}
m(F_{k}^{-1}F_{k})<Cm(F_{k})
\end{equation*}
for some $C>0$ independent of~$k$. We refer to
\cite[Chapter~5]{tempelman92ergbook} for more details. It is unclear for
us how to establish the noncommutative maximal inequalities in this setting.
In the following, we provide a typical example for which the inequalities
hold true.\vspace*{-2.5pt}
\begin{thm}
	\label{thm:max subgp}%
	Let $G$ be an increasing union of compact subgroups
	$(G_{n})_{n\geq 1}$. Fix $1\leq p<\infty $. Let $(A_{n})_{n\geq 1}$ be
	the averaging operators\vspace*{-2.5pt}
	\begin{equation*}
	A_{n}x=
	\frac{1}{m(G_{n})}
	\int _{G_{n}}\alpha _{s}x\,dm(s),\quad x\in
	L_{p}( \mathcal{M}).
	\end{equation*}
	Then $(A_{n})_{n\geq 1}$ is of strong type $(p,p)$ if $1<p<\infty $. If,
	in addition, $\alpha $ satisfies the condition \emph{$(\mathbf{A} ')$}, then
	$(A_{n})_{n\geq 1}$ is of weak type $(1,1)$.\vspace*{-2.5pt}
\end{thm}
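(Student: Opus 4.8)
The plan is to transfer the problem to the translation action and recognise the resulting averages as a reverse martingale, so that the conclusion follows from the noncommutative Doob inequality (Lemma~\ref{lem:doob}). First I would note that each $G_n$ is a compact subgroup, so the restriction $m|_{G_n}$ is invariant under right translation by $G_n$ and hence, by unimodularity of compact groups, proportional to the Haar probability measure $\mu_n$ on $G_n$. Thus $A_n x=\int_G\alpha_g x\,d\mu_n(g)$ is exactly of the form \eqref{eq:average mu_n}. Since $G=\bigcup_n G_n$ is a directed union of compact (hence amenable) subgroups, $G$ is amenable, and Theorem~\ref{thm:transference} (together with Theorem~\ref{thm:transference weak} under the hypothesis $(\mathbf A ')$) reduces the desired strong type $(p,p)$ for $1<p<\infty$ and the weak type $(1,1)$ to the same inequalities for the translation averages
\[
A_n'f(g)=\frac{1}{m(G_n)}\int_{G_n}f(gh)\,dm(h),\qquad f\in L_p(G;L_p(\mathcal{M})).
\]

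Next I would identify the $A_n'$ as conditional expectations. Using left invariance of the Haar measure on the compact group $G_n$, the function $A_n'f$ is constant on each left coset $gG_n$, and $A_n'$ is precisely the trace-preserving conditional expectation from $L_\infty(G)\bar{\otimes}\mathcal{M}$ onto the subalgebra $\mathcal{N}_n$ of functions measurable with respect to the partition of $G$ into left $G_n$-cosets. Because $G_n\subset G_{n+1}$ makes this partition coarser as $n$ grows, the algebras $(\mathcal{N}_n)_{n\geq1}$ are \emph{decreasing}; in other words $(A_n')_{n\geq1}$ is a reverse martingale. Note that the $\alpha$-dependence has been absorbed entirely into the transference constants, so the transferred estimate is a purely martingale-theoretic statement, independent of whether $\alpha$ is merely positive or satisfies $(\mathbf A ')$.

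The main (and essentially only) obstacle is that Lemma~\ref{lem:doob} is stated for an increasing filtration, whereas ours decreases. I would resolve this by a finite reduction: for fixed $N$ the chain $\mathcal{N}_N\subset\mathcal{N}_{N-1}\subset\cdots\subset\mathcal{N}_1$ is increasing, and after relabelling and padding it to a $\mathbb{Z}$-indexed increasing filtration with w*-dense union (setting the algebras equal to $\mathcal{N}_N$ below and to $L_\infty(G)\bar{\otimes}\mathcal{M}$ above), Lemma~\ref{lem:doob} yields the weak type $(1,1)$ and strong type $(p,p)$ estimates for the finite family $(A_n')_{1\leq n\leq N}$ with constants independent of $N$. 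Letting $N\to\infty$, Proposition~\ref{prop:nc max funct}(1) promotes the strong type estimate to the whole family, while Lemma~\ref{lem:reduction finite weak} promotes the weak type estimate (at the cost of a factor $4$). Transferring back through Theorem~\ref{thm:transference} and Theorem~\ref{thm:transference weak} then gives the strong type $(p,p)$ for the general positive action and the weak type $(1,1)$ under $(\mathbf A ')$, as claimed.
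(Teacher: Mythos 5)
Your proposal is correct and takes essentially the same route as the paper: transfer via Theorems \ref{thm:transference} and \ref{thm:transference weak} to the translation averages, identify these with the conditional expectations onto the coset $\sigma$-algebras of the subgroups $G_n$, and conclude by the noncommutative Doob inequality (Lemma \ref{lem:doob}). The only difference is one of care rather than of method: the paper applies Lemma \ref{lem:doob} to the decreasing coset filtration without comment, whereas you explicitly repair the increasing-versus-decreasing indexing by truncating, relabelling and padding the filtration and then passing to the limit via Proposition \ref{prop:nc max funct}(1) and Lemma \ref{lem:reduction finite weak}.
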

\begin{proof}
	By Theorems~\ref{thm:transference} and~\ref{thm:transference weak}, it
	suffices to prove the maximal inequalities for the averaging operators\vspace*{-2.5pt}
	\begin{equation*}
	A_{n}'f(s)=
	\frac{1}{m(G_{n})}
	\int _{G_{n}}f(st)\,dm(t),\quad f\in L_{p}
	\bigl(G;L_{p}( \mathcal{M}) \bigr),s\in G,n\geq 1.
	\end{equation*}
	Set $\Sigma $ to be the $\sigma $-algebra of Borel sets on~$G$. For each
	$n\geq 1$, we define $\Sigma _{n}\subset \Sigma $ to be the
	$\sigma $-subalgebra generated by the cosets of $G_{n}$\vspace*{-2.5pt}
	\begin{equation*}
	\{s G_{n}:s\in G\}.
	\end{equation*}
	We see that $\Sigma _{n+1}\subset \Sigma _{n}$ for all $n\geq 1$. Let
	$\mathbb{E}_{n}$ be the conditional expectation from
	$L_{\infty }(G,\Sigma ,m)\bar{\otimes }\mathcal{M}$ to
	$L_{\infty }(G,\Sigma _{n},m| _{\Sigma _{n}})\bar{\otimes }
	\mathcal{M}$. Then it is easy to see that
	\begin{equation*}
	\mathbb{E}_{n}=A_{n}',\quad n\geq 1.
	\end{equation*}
	According to Lemma~\ref{lem:doob}, we see that
	$(\mathbb{E}_{n})_{n\geq 1}$ is of weak type $(1,1)$ and of strong type
	$(p,p)$ for $1<p<\infty $. This yields the desired inequalities.
\end{proof}

\subsection{A~random walk approach}
\label{sect:walk}

In this subsection, we provide an alternative approach to maximal inequalities
for groups of polynomial growth. This approach is based on a Gaussian lower
bound of random walks on groups (see
\cite{hebischsaloffcoste93gaussianbound}). Independent of the previous
approaches, in this method we do not need the results on dyadic decompositions
of the group, nor do we use the transference principle. The key observation
is that we may relate the ball averages on groups with the ergodic averages
of a Markov operator.

\begin{prop}%
	\label{prop:walk domination}
	Let $G$ be a locally compact group of polynomial growth, and let $V$ be
	a compact generating set. Let $\alpha $ be a strongly continuous action
	of $G$ on an ordered Banach space $E$ such that
	$\alpha _{s} x\geq 0$ for all $s\in G$ and $x\in E_{+}$. Define an operator
	$T$ on $E$ by
	\begin{equation*}
	Tx=
	\frac{1}{m(V)}
	\int _{V} \alpha _{s} x \,dm(s),\quad x\in E.
	\end{equation*}
	Then there exists a constant $c$ depending only on $G$ such that
	\begin{equation*}
	\frac{1}{m(V^{n})}
	\int _{V^{n}} \alpha _{s} x \,dm(s) \leq
	\frac{c}{n^{2}}
	\sum _{k=1}^{2n^{2}}T^{k} x,
	\quad x\in E_{+}.
	\end{equation*}
\end{prop}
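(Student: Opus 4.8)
The plan is to read the operators probabilistically and thereby reduce the asserted order inequality in $E$ to a pointwise comparison of random-walk kernels, to which the Gaussian lower bounds of \cite{hebischsaloffcoste93gaussianbound} apply.

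\emph{Step 1: reformulation via convolutions.} Let $\mu=\frac{1}{m(V)}\chi_{V}\,m$ be the uniform probability measure on $V$, and $\nu_{n}=\frac{1}{m(V^{n})}\chi_{V^{n}}\,m$. Since $\alpha_{g}\alpha_{h}=\alpha_{gh}$ and each $\alpha_g$ commutes with the (Bochner) integral, pushing $\mu^{\times k}$ forward under the $k$-fold product map shows
\[
T^{k}x=\int_{G}\alpha_{g}x\,d\mu^{*k}(g),\qquad \frac{1}{m(V^{n})}\int_{V^{n}}\alpha_{g}x\,dm(g)=\int_{G}\alpha_{g}x\,d\nu_{n}(g),
\]
where $\mu^{*k}$ is the $k$-fold convolution of the symmetric measure $\mu$. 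Each integral is the Bochner integral of the continuous map $g\mapsto\alpha_{g}x$ against a finite compactly supported measure, hence well defined in $E$.

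\emph{Step 2: reduction to a kernel estimate.} Set $\rho_{n}=\frac{c}{n^{2}}\sum_{k=1}^{2n^{2}}\mu^{*k}-\nu_{n}$. If $\rho_{n}$ is a positive measure then, as $\alpha_{g}x\in E_{+}$ and $E_{+}$ is norm closed, the integral $\int_{G}\alpha_{g}x\,d\rho_{n}(g)$ lies in $E_{+}$, which is exactly the claimed inequality. All measures in play are absolutely continuous with respect to $m$, so, writing $\phi_{k}=d\mu^{*k}/dm$, it suffices to prove, $m$-a.e.,
\[
\frac{1}{m(V^{n})}\chi_{V^{n}}(g)\leq\frac{c}{n^{2}}\sum_{k=1}^{2n^{2}}\phi_{k}(g).
\]
The left-hand side vanishes off $V^{n}$, so only $g\in V^{n}$ needs attention.

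\emph{Step 3: Gaussian lower bound.} By \cite{hebischsaloffcoste93gaussianbound}, on a group of polynomial growth the kernels of the symmetric walk driven by $\mu$ satisfy $\phi_{k}(g)\geq\frac{c_{1}}{V(\sqrt{k})}\exp\!\big(-C_{1}|g|^{2}/k\big)$, where $|g|$ is the word length relative to $V$ and $V(r)=m(V^{\lfloor r\rfloor})$. For $g\in V^{n}$ one has $|g|\leq n$, so for $k\in[n^{2},2n^{2}]$ we get $|g|^{2}/k\leq1$; and since $\sqrt{k}<2n$, the volume doubling of polynomial-growth groups gives $V(\sqrt{k})\leq m(V^{2n})\leq C\,m(V^{n})$. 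Hence $\phi_{k}(g)\geq c'/m(V^{n})$ uniformly over $g\in V^{n}$ and $n^{2}\leq k\leq2n^{2}$. Summing the (at least) $n^{2}$ indices $k\in[n^{2},2n^{2}]$ yields $\sum_{k=1}^{2n^{2}}\phi_{k}(g)\geq c'n^{2}/m(V^{n})$ on $V^{n}$, and choosing $c$ with $cc'\geq1$ closes the estimate.

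The main obstacle is Step 3, specifically the potential periodicity of the walk: if $e\notin V$ then $\phi_{k}(g)=0$ unless $k$ and $|g|$ have compatible parity, so the Gaussian lower bound can only hold along the correct parity class. This is precisely why the statement sums $T^{k}$ over the whole range $k\in[1,2n^{2}]$ rather than using a single power: for each fixed $g$, at least half of the indices $k\in[n^{2},2n^{2}]$ fall in the admissible class, still leaving on the order of $n^{2}$ good terms and affecting only the constant $c$. When $\mu$ is absolutely continuous (for instance on a connected Lie group) no parity issue arises and the argument is cleaner. The remaining points — Bochner integrability of $g\mapsto\alpha_{g}x$ against each $\mu^{*k}$, and norm closedness of $E_{+}$ — are routine consequences of strong continuity and the definition of an ordered Banach space.
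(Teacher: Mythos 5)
Your proposal is correct and follows essentially the same route as the paper's own proof: you rewrite $T^k x=\int_G\alpha_g x\,d\mu^{*k}(g)$, reduce the operator inequality to the pointwise kernel bound $\frac{1}{m(V^n)}\chi_{V^n}\leq\frac{c}{n^2}\sum_{k\leq 2n^2}\phi_k$ (this is the paper's Lemma \ref{lem:walk}, stated there for the balls $B_n$), and deduce it from the Gaussian lower bound of \cite{hebischsaloffcoste93gaussianbound} applied to the indices $k\in[n^2,2n^2]$, for which the Gaussian factor is bounded below and $m(B_{\sqrt k})\leq C\,m(V^n)$ by doubling. Your closing remark on periodicity is, if anything, more careful than the paper itself, which applies the quoted Gaussian bound to $f=\chi_V/m(V)$ without comment even though, as literally stated, that bound fails when $\operatorname{supp}(f)$ misses a neighborhood of the identity (e.g.\ $G=\mathbb{Z}$, $V=\{\pm1\}$, where $f^{\star k}$ vanishes on the wrong parity class); your observation that the sum over $k\in[1,2n^2]$ always contains order $n^2$ indices in the admissible class is the right way to close that gap.
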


We remark that for actions by abelian semigroups, it is known by Brunel
\cite{brunel73ergzn} that the ergodic averages of multioperators can be
related to averages of some Markov operators; Nevo and Stein
\cite{nevostein94freeerg} showed that similar observations hold for spherical
averages of free group actions. These results play an essential role in
the proof of ergodic theorems therein. In the case of groups of polynomial
growth, our construction of Markov operators is different from theirs,
which is inspired by \cite{steinstromberg83maxlargen}. The argument is
relatively easy and based on the Markov chains on groups of polynomial
growth.

To prove the proposition, we consider a locally compact group $G$ and a
measure $\mu $ on~$G$. For an integer $k$, we denote by
$\mu ^{\star k}$ the $k$th convolution of $\mu $, that is, the unique measure
$\nu $ on $G$ satisfying
\begin{equation*}
\int _{G} f \,d\nu =
\int _{\prod _{i=1}^{k} G}f(s_{1}\cdots s_{k}) \,d \mu
(s_{1})\cdots d\mu (s_{k}),\quad f\in C_{0}(G).
\end{equation*}
If $f$ is the density function of $\mu $, then we still denote by
$f^{\star k}$ the density function of $\mu ^{\star k}$.

In the following $d$ will denote the word metric with respect to $V$ introduced
in Example~\ref{ex:max doubling}(1), and
$B_{r}= \{x\in G:d(e,x)\leq r\}$ for $r>0$.
\begin{lem}[{\cite[Theorem 5.1]{hebischsaloffcoste93gaussianbound}}]
	Let $G$ be a
	locally compact group of polynomial growth, and let $V$ be a compact generating
	set. Let $f$ be the density function of a symmetric continuous probability
	measure on $G$ such that $\operatorname{supp} (f)$ is bounded and
	$V\subset \operatorname{supp} (f)$. Then there exists a constant $c>0$ such that
	for any integer~$k$,
	\begin{equation*}
	f^{\star k}(s)\geq
	\frac{c e^{-d(e,s)^{2}/k}}{m(B_{\sqrt{k}})},\quad s \in B_{k}.
	\end{equation*}
\end{lem}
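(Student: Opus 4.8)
The plan is to place the statement inside the two-sided Gaussian estimate theory for symmetric Markov chains, in the Moser--Nash form developed by Hebisch and Saloff-Coste and, in the abstract, by Saloff-Coste and Grigor'yan, and then to read off the claimed lower bound at the end. The density $f$ determines a symmetric Markov operator $Pu(x)=\int_G u(xg)f(g)\,dm(g)$ on $L_2(G,m)$ whose $k$-th power has convolution kernel $f^{\star k}$, and whose Dirichlet form is
\[
\mathcal{E}(u,u)=\tfrac{1}{2}\iint_{G\times G}|u(y)-u(z)|^2 f(y^{-1}z)\,dm(y)\,dm(z).
\]
The whole estimate is governed by two structural properties of the weighted space $(G,d,m,f)$, which I would isolate first. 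Note that since $V\subset\mathrm{supp}(f)$ and $\mathrm{supp}(f)$ is bounded, one has $\mathrm{supp}(f^{\star k})\supseteq V^k\supseteq B_k$, so $f^{\star k}$ is genuinely positive on $B_k$ and the asserted lower bound is meaningful precisely on this range.

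The first input is \emph{volume doubling}: the polynomial growth hypothesis $m(V^n)\le kn^r$ together with unimodularity of $G$ gives $m(B_{2r})\le C\,m(B_r)$ for all $r>0$, and more generally $m(B_{Ar})\le C_A\,m(B_r)$, which will be used repeatedly to move between the scales $\sqrt k$, $\sqrt{k/N}$ and $A\sqrt k$. The second, and analytically decisive, input is a \emph{scale-invariant Poincar\'e inequality}: there is a constant $C$ so that for every $r>0$ and every ball $B=B(x,r)$,
\[
\int_B |u-u_B|^2\,dm\leq C r^2\iint_{B^*\times B^*}|u(y)-u(z)|^2 f(y^{-1}z)\,dm(y)\,dm(z),
\]
where $u_B$ is the $m$-mean of $u$ over $B$ and $B^*$ is a fixed dilate of $B$. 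Because $V\subset\mathrm{supp}(f)$, the jump kernel $f(y^{-1}z)$ dominates a multiple of the indicator of a symmetric neighbourhood generating the word metric, so this Poincar\'e inequality reduces to one for the combinatorial Dirichlet form along $V$-edges, obtainable by a chaining argument over word geodesics whose intermediate balls have controlled overlap thanks to doubling.

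With doubling and the family of Poincar\'e inequalities in hand, I would invoke the equivalence theorem that these two properties are equivalent to the parabolic Harnack inequality for the discrete-time chain, hence to the full two-sided Gaussian estimate. Concretely, running the discrete Moser iteration yields, on one side, the on-diagonal upper bound $f^{\star k}(e)\le C/m(B_{\sqrt k})$ via the Nash/Faber--Krahn inequality implied by doubling and Poincar\'e; on the other side, it yields a \emph{near-diagonal lower bound} $f^{\star k}(g)\ge c/m(B_{\sqrt k})$ for $d(e,g)\le\varepsilon\sqrt k$. The latter is produced by combining the mean-value lower bound
\[
f^{\star 2k}(e)=\|f^{\star k}\|_2^2\geq \frac{c}{m(B_{\sqrt k})},
\]
which follows from $\int_G f^{\star k}\,dm=1$, Cauchy--Schwarz, the upper bound confining most mass to $B_{A\sqrt k}$, and doubling, with the parabolic Harnack inequality used to transport the value at $(e,2k)$ to nearby points and comparable times.

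Finally I would pass from the near-diagonal bound to the stated Gaussian factor by the standard chaining argument. For $g\in B_k$ set $\ell=d(e,g)\le k$ and choose $N\simeq\max(1,\ell^2/k)$ and times $k_1+\dots+k_N=k$ with each $k_i\simeq k/N$; the constraint $g\in B_k$ is exactly what guarantees $\ell^2/k\le k$, so each block time stays $\gtrsim 1$, and each step moves a distance $\ell/N\le\varepsilon\sqrt{k_i}$, placing it in the near-diagonal regime. Using Chapman--Kolmogorov,
\[
f^{\star k}(g)=\int_{G^{N-1}} f^{\star k_1}(y_1)f^{\star k_2}(y_1^{-1}y_2)\cdots f^{\star k_N}(y_{N-1}^{-1}g)\,dm(y_1)\cdots dm(y_{N-1}),
\]
and restricting each $y_i$ to a ball of radius $\simeq\sqrt{k/N}$ about the chain point $x_i$, each factor is $\ge c/m(B_{\sqrt{k/N}})$, so integrating over the product of these balls gives
\[
f^{\star k}(g)\geq \frac{c^{N}}{m(B_{\sqrt{k/N}})}\geq \frac{c^{N}}{m(B_{\sqrt k})}.
\]
Since $c^{N}=e^{-c''N}\simeq e^{-c''\ell^2/k}$, optimizing the number of blocks recovers the exponent $e^{-d(e,g)^2/k}$ (the sharp constant $1$ coming from this optimization), while doubling keeps the volume factor at scale $m(B_{\sqrt k})$. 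I expect the main obstacle to be the Poincar\'e inequality uniformly across all scales for the word metric: this genuinely uses the large-scale geometry of groups of polynomial growth---ultimately Gromov's theorem, which makes $G$ quasi-isometric to a nilpotent Lie group, where the sub-Riemannian Poincar\'e inequality is classical and can be transferred back. Once doubling and Poincar\'e are secured, the Moser iteration and the chaining are routine.
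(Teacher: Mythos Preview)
The paper does not prove this lemma: it is simply quoted from Hebisch and Saloff-Coste \cite{hebischsaloffcoste93gaussianbound} with no argument given. Your proposal is therefore not being compared against any proof in the present paper, but rather constitutes a sketch of the original Hebisch--Saloff-Coste strategy (volume doubling plus a scale-invariant Poincar\'e inequality on a group of polynomial growth yield the parabolic Harnack inequality, hence the near-diagonal lower bound $f^{\star k}(g)\ge c/m(B_{\sqrt k})$ for $d(e,g)\le\varepsilon\sqrt k$, and then the full Gaussian lower bound follows by the standard Chapman--Kolmogorov chaining). That outline is correct in its essentials.

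One small point: the chaining argument as you present it gives $f^{\star k}(g)\ge c\,e^{-C d(e,g)^2/k}/m(B_{\sqrt k})$ for \emph{some} constant $C>0$ depending on the near-diagonal constant and the block parameter $\varepsilon$; it does not naturally produce the sharp exponent with $C=1$ as stated in the lemma, and your remark that ``the sharp constant $1$ com[es] from this optimization'' is not justified by the argument you wrote. This is harmless here, since the only use of the lemma in the paper (Lemma~\ref{lem:walk} and Proposition~\ref{prop:walk domination}) applies it with $g\in B_{\sqrt k}$, where the exponential factor is bounded below by a fixed positive constant regardless of the value of $C$.
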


\begin{lem}%
	\label{lem:walk}
	Let $G$, $V$, and $f$ be as in the previous lemma. Then there exists a
	constant $c>0$ such that for any integer~$n$,
	\begin{equation*}
	\frac{\chi _{B_{n }}}{m(B_{n})}\leq
	\frac{c}{2n^{2}}
	\sum _{k=1}^{2n^{2}}f^{
		\star k}.
	\end{equation*}
\end{lem}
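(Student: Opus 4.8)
The plan is to bound $\sum_{k=1}^{2n^2} f^{\star k}(g)$ from below pointwise, restricting attention to the window $n^2 \le k \le 2n^2$, on which all the relevant scales balance. For $g \notin B_n$ the left-hand side vanishes while every $f^{\star k}$ is a probability density and hence nonnegative, so there the inequality is automatic; I would therefore only treat $g \in B_n$.

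Fixing $g \in B_n$, so that $d(e,g) \le n$, I observe that for each $k$ in the window one has $d(e,g) \le n \le n^2 \le k$, so $g \in B_k$ and the Gaussian lower bound of the previous lemma applies, giving
$$f^{\star k}(g) \ge \frac{c\, e^{-d(e,g)^2/k}}{m(B_{\sqrt k})}.$$
On this window $d(e,g)^2/k \le n^2/n^2 = 1$, so the exponential in the numerator is bounded below by $e^{-1}$. For the denominator I would invoke doubling: since $n \le \sqrt k \le \sqrt 2\, n \le 2n$ we have $B_{\sqrt k} \subseteq B_{2n}$, and the word metric satisfies \eqref{eq:doubling def} by Example \ref{ex:max doubling}(1), so $m(B_{\sqrt k}) \le m(B_{2n}) \le C\, m(B_n)$ with $C$ depending only on $G$. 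Together these yield $f^{\star k}(g) \ge c'/m(B_n)$ uniformly over the window, with $c' = c\,e^{-1}/C$.

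Finally I would sum over the window: the interval $[n^2, 2n^2]$ contains at least $n^2$ integers, so
$$\sum_{k=1}^{2n^2} f^{\star k}(g) \ge \sum_{k=n^2}^{2n^2} f^{\star k}(g) \ge \frac{c'\, n^2}{m(B_n)},$$
and dividing by $2n^2$ gives $\frac{1}{2n^2}\sum_{k=1}^{2n^2} f^{\star k}(g) \ge \frac{c'}{2\,m(B_n)} = \frac{c'}{2}\cdot\frac{\chi_{B_n}(g)}{m(B_n)}$. Taking $c = 2/c'$ then delivers the claimed inequality for all $g$.

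I do not expect a genuine obstacle; the one point requiring care is the choice of the summation window $[n^2, 2n^2]$, which is forced by the need to balance three competing scales simultaneously: the Gaussian factor $e^{-d(e,g)^2/k}$ must stay bounded below uniformly for $g \in B_n$ (pushing $k \gtrsim n^2$), the volume $m(B_{\sqrt k})$ must remain comparable to $m(B_n)$ through the doubling condition (pushing $k \lesssim n^2$), and the number of admissible $k$ must be of order $n^2$ so as to absorb the $1/n^2$ prefactor. Once this window is pinned down, the remainder is a short and direct estimate.
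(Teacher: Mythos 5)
Your proof is correct and follows essentially the same route as the paper's: restrict the sum to a window of length of order $n^2$ on which the Gaussian lower bound applies with $d(e,g)^2/k\le 1$, bound $m(B_{\sqrt k})$ by a constant multiple of $m(B_n)$ via doubling, and count the terms. The only (cosmetic) difference is that your window $[n^2,2n^2]$ handles every $n\ge 1$ uniformly, whereas the paper sums over $[(n+1)^2,2n^2]$ and reduces to sufficiently large $n$.
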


\begin{proof}
	It suffices to prove the inequality in the lemma for sufficiently large~$n$. By the previous lemma, there exists $c>0$ such that
	\begin{equation*}
	f^{\star k}(s)\geq
	\frac{c}{m(B_{\sqrt{k}})},\quad s\in B_{\sqrt{k}}(e).
	\end{equation*}
	Therefore, for $s\in B_{n }(e)$,
	\begin{align*}
	\frac{1}{2n^{2}}
	\sum _{k=1}^{2n^{2}}f^{\star k}(s) &
	\geq
	\frac{1}{2n^{2}}
	\sum _{k=(n+1)^{2}}^{2n^{2}}f^{\star k}(s)
	\geq
	\frac{1}{2n^{2}}
	\sum _{k=(n+1)^{2}}^{2n^{2}}
	\frac{c}{m(B_{\sqrt{k}})}
	\\
	&\geq
	\frac{c(n^{2}-2n-1)}{2n^{2}m(B_{\sqrt{2}n})} \geq
	\frac{c'}{m(B_{n})},
	\end{align*}
	where $c'>0$ is a constant depending only on the doubling condition of~$G$.
\end{proof}

\begin{proof}[Proof of Proposition~\ref{prop:walk domination}]
	We apply Lemma~\ref{lem:walk} with $f=\chi _{V}/m(V)$. Then we obtain for
	$x\in E_{+}$,
	\begin{equation*}
	\frac{1}{m(V^{n})}
	\int _{V^{n}} \alpha _{s} x \,dm(s) \leq
	\frac{c}{2n^{2}}
	\sum _{k=1}^{2n^{2}}
	\int _{G} \alpha _{s} x f^{\star k}(s)
	\,dm(s).
	\end{equation*}
	By the definition of $f^{\star k}$, we have
	\begin{equation*}
	\int _{G} \alpha _{s} x f^{\star k}(s)
	\,dm(s) =
	\int _{\prod _{i=1}^{k} G} \alpha _{s_{1}\cdots s_{k}}x f (s_{1})\cdots
	f(s_{k}) \,dm(s_{1}) \cdots dm(s_{k}).
	\end{equation*}
	Recall that $\alpha $ is a group action and $f=\chi _{V}/m(V)$, so we obtain
	\begin{align*}
	\int _{G} \alpha _{s} x f^{\star k}(s)
	\,dm(s) =
	\frac{1}{m(V)^{k}}
	\int _{
		\prod _{i=1}^{k} V} \alpha _{s_{1}}\cdots \alpha
	_{s_{k}}x \,dm(s_{1}) \cdots dm(s_{k})=T^{k}
	x.
	\end{align*}
	We have therefore established the desired inequality.
\end{proof}

Proposition~\ref{prop:walk domination} allows us to deduce maximal ergodic
theorems for group actions from well-known ergodic theorems for Markov
operators. As a corollary, we may obtain the maximal inequalities in Example~\ref{ex:max doubling}(1). Moreover, for maximal ergodic inequalities on
$L_{1}$, it is not necessary to assume that $\alpha $ is an action by automorphisms
as in $(\mathbf{A} ')$.
\begin{cor}%
	\label{cor:weakonepoly}
	Let $G$ and $V$ be as above. Let $\alpha $ be a continuous $\tau $-preserving
	action of $G$ on $\mathcal{M}$ such that $\alpha _{s}$ is a positive isometry
	on $\mathcal{M}$ for each $s\in G$. Then $\alpha $ extends to an action
	on $L_{1}(\mathcal{M})$. The operators defined by
	\begin{equation*}
	A_{n}x=
	\frac{1}{m(V^{n})}
	\int _{V^{n}}\alpha _{s}x\,dm(s),\quad x\in
	L_{1}( \mathcal{M}),n\in \mathbb{N}
	\end{equation*}
	are of weak type $(1,1)$.
\end{cor}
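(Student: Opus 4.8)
The plan is to reduce the ball averages $A_{n}$ to the ergodic averages of the single Markov operator $T=\frac{1}{m(V)}\int_{V}\alpha_{g}\,dm(g)$ via Proposition \ref{prop:walk domination}, and then invoke the noncommutative Dunford--Schwartz maximal ergodic theorem of Junge and Xu \cite{jungexu07erg}. The crucial point, which is exactly what lets us dispense with the automorphism hypothesis $(\mathbf A')$, is that $T$ is a Dunford--Schwartz operator even though the individual maps $\alpha_{g}$ are merely positive $\tau$-preserving isometries of $\mathcal{M}$.

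First I would settle the extension to $L_{1}$ and the Dunford--Schwartz property. For $x\in L_{1}^{+}(\mathcal{M})\cap\mathcal{M}$ we have $\|\alpha_{g}x\|_{1}=\tau(\alpha_{g}x)=\tau(x)=\|x\|_{1}$, since $\alpha_{g}$ is positive and $\tau$-preserving; the standard argument (cf. \cite[Lemma 1.1]{jungexu07erg} and \cite[Proposition 1]{yeadon77max}) then extends each $\alpha_{g}$ to a positive isometry of $L_{1}(\mathcal{M})$, which is the asserted action on $L_{1}(\mathcal{M})$. Averaging over $V$, the operator $T$ is a positive contraction on $L_{1}(\mathcal{M})$ and, being an average of $L_{\infty}$-isometries, also a positive contraction on $\mathcal{M}=L_{\infty}(\mathcal{M})$; hence $T$ is a positive Dunford--Schwartz operator. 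By \cite{jungexu07erg} the ergodic averages $M_{N}\coloneqq\frac{1}{N}\sum_{k=0}^{N-1}T^{k}$ therefore form a family of weak type $(1,1)$, say with constant $C$.

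Next I would transfer this bound to the $A_{n}$ through a pointwise domination. Applying Proposition \ref{prop:walk domination} with $E=L_{1}(\mathcal{M})$ and using $T^{0}x=x\geq0$, for $x\in L_{1}^{+}(\mathcal{M})$ and $n\geq1$ we obtain
\[
A_{n}x\leq\frac{c}{n^{2}}\sum_{k=1}^{2n^{2}}T^{k}x\leq\frac{c}{n^{2}}\sum_{k=0}^{2n^{2}}T^{k}x=\frac{c(2n^{2}+1)}{n^{2}}\,M_{2n^{2}+1}x\leq 3c\,M_{2n^{2}+1}x.
\]
Weak type $(1,1)$ then passes through this domination: given $x\in L_{1}^{+}(\mathcal{M})$ and $\lambda>0$, apply the weak type $(1,1)$ estimate for $(M_{N})_{N}$ at level $\lambda/(3c)$ to get a projection $e$ with $\tau(1-e)\leq\frac{3cC}{\lambda}\|x\|_{1}$ and $eM_{N}(x)e\leq\frac{\lambda}{3c}e$ for all $N$. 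Since $y\mapsto eye$ is order-preserving, the display above yields $eA_{n}(x)e\leq 3c\,eM_{2n^{2}+1}(x)e\leq\lambda e$ for every $n$. Thus $(A_{n})_{n\geq1}$ is of weak type $(1,1)$ with constant $3cC$.

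The genuinely delicate ingredient is already packaged in Proposition \ref{prop:walk domination}, namely the Gaussian lower bound for the iterated convolution kernels that allows the ball averages to be dominated by the heat-type averages $\frac{1}{n^{2}}\sum_{k}T^{k}$. Beyond that, the only step requiring real care is verifying that $T$ is an honest $L_{1}$-contraction for a non-automorphic $\alpha_{g}$ --- this is precisely where the improvement over $(\mathbf A')$ lives --- and it follows cleanly from $\tau$-invariance on the positive cone. The subsequent transfer of weak type $(1,1)$ across a pointwise operator domination is then purely formal.
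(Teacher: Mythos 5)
Your proof is correct and takes essentially the same route as the paper: both dominate $A_{n}$ by the ergodic averages of the Markov operator $T=\frac{1}{m(V)}\int_{V}\alpha_{g}\,dm(g)$ via Proposition \ref{prop:walk domination} and then invoke the weak type $(1,1)$ maximal ergodic theorem for this positive, $\tau$-preserving (Dunford--Schwartz) operator --- the paper cites Yeadon \cite{yeadon77max} where you cite Junge--Xu \cite{jungexu07erg}, but this is the same estimate. Your write-up merely makes explicit two points the paper leaves implicit, namely the extension of $\alpha$ to an isometric action on $L_{1}(\mathcal{M})$ and the formal transfer of weak type $(1,1)$ through the pointwise domination.
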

\begin{proof}
	Note that the operator
	\begin{equation*}
	Tx=
	\frac{1}{m(V)}
	\int _{V} \alpha _{s} x \,dm(s),\quad x\in
	\mathcal{M}
	\end{equation*}
	is a positive contraction on $\mathcal{M}$, which preserves~$\tau$. Then
	it is well known that the averages
	$\frac{1}{n }\sum _{k=1}^{n}T^{k} $ are of weak type $(1,1)$ (see
	\cite{yeadon77max}). Thus, by Proposition~\ref{prop:walk domination},
	$(A_{n})_{n\geq 0}$ is of weak type $(1,1)$ as well.
\end{proof}

\section{Maximal inequalities: Group-theoretic approach}
\label{sect:group}

In this section we provide an alternative approach to Theorem~\ref{thm:max doubling} in the case where $G$ is a finitely generated discrete
group of polynomial growth, $d$ is the word metric, and $p\neq 1$. The
argument follows from a structural study of nilpotent groups.\vadjust{\goodbreak}

We first recall some well-known facts on the structure of nilpotent groups.
Let $G$ be a discrete finitely generated nilpotent group with lower central
series
\begin{equation*}
G=G_{1}\supset G_{2}\supset \cdots \supset G_{K}
\supset G_{K+1}=\{e\}.
\end{equation*}
Each quotient group $G_{i}/G_{i+1}$ is an abelian group of rank
$r_{i}$, that is, there is a group isomorphism
\begin{equation*}
\pi _{i}:G_{i}/G_{i+1}\to F_{i}\times
\mathbb{Z}^{r_{i}}
\end{equation*}
with a finite abelian group $F_{i}$. It was shown in
\cite{bass72polygrowthnil} that $G$ is of polynomial growth. We summarize
below some facts in the argument of \cite{bass72polygrowthnil}. We may
choose a finite generating set $T$ of $G$ such that
\begin{equation*}
[T,T]=\{s^{-1}t^{-1}st:s,t\in T\}\subset T
\end{equation*}
and take
\begin{equation*}
T_{i}=G_{i}\cap T,\quad 1\leq i\leq K+1.
\end{equation*}
Then
\begin{equation*}
G_{i}=\langle T_{i}\rangle ,\quad 1\leq i\leq K+1.
\end{equation*}
For each $1\leq j\leq K$, we order the elements in
$T_{j}\setminus T_{j+1}$ as
\begin{equation}
\label{eq:tjminus} T_{j}\setminus T_{j+1} =
\{t_{1}^{(j)},t_{2}^{(j)},
\ldots,t_{r_{j}}^{(j)},t_{r_{j}+1}^{(j)}
,\ldots,t_{l_{j}}^{(j)}\}
\end{equation}
so that
$\pi _{j}([t_{1}^{(j)}]),\ldots,\pi _{j}([t_{r_{j}}^{(j)}])$ are the generators
of $\mathbb{Z}^{r_{j}}$. Let $N_{j}$ be the index of the subgroup
$\langle t_{1}^{(j)},t_{2}^{(j)},\ldots,t_{r_{j}}^{(j)}\rangle G_{j+1}$
in $G_{j}$.

By a \emph{word} in a subset $T'\subset T$ we mean a sequence of elements
$w=(s_{1},s_{2},\ldots,\allowbreak s_{n})$ with
$s_{1},s_{2},\ldots,s_{n}\in T'$, and we denote by
$ \llvert  w \rrvert  =s_{1}s_{2}\cdots s_{n}\in G$ the resulting group element in~$G$. If
$w=(s_{1},s_{2},\ldots,s_{n})$ is a word in $T'$ and if
$T''\subset T'$, then we let $\deg _{T''}(w)$ be the cardinality of
$\{k:1\leq k\leq n,s_{k}\in T'' \}$. We say that
\begin{equation*}
\deg ^{(j)}(w)\leq (d_{j},d_{j+1},
\ldots,d_{K})\coloneqq d
\end{equation*}
if we have $\deg _{T_{j}\setminus T_{i+1}}(w)\leq d_{i}$ for all
$j\leq i\leq K$. Denote by $G_{j}(d)$ the set of all words $w$ in
$T_{j}$ such that $\deg ^{(j)}(w)\leq d$, and denote by $G_{j}'(d)$ the
subset of the words $w\in G_{j}(d)$ of the form
\begin{equation*}
w=(t_{1}^{(j)},\ldots,t_{1}^{(j)},
\ldots,t_{l_{j}}^{(j)},\ldots,t_{l_{j}}^{(j)},v),
\end{equation*}
where $v$ is a word in $T_{j+1}$ and the element $t_{k}^{(j)}$ does not
appear more than $N_{j}$ times in $w$ for $r_{j}<k\leq l_{j}$.

The key observation in \cite{bass72polygrowthnil} for proving the polynomial
growth of $G$ is as follows (see the assertions (6) and (7) in
\cite[p.~613]{bass72polygrowthnil}).
\begin{lem}
	Let $c>0$ be a constant, and let $m\geq 1$. For each $1\leq j\leq K$, we
	have
	\begin{equation*}
	\begin{aligned}
	&
	\bigl\{g\in G:g= \llvert w \rrvert ,w\in G_{j}(cm^{j},
	\ldots,cm^{K}) \bigr\}
	\\
	&\quad\quad  \subset \bigl\{g\in G:g= \llvert w
	\rrvert ,w \in G_{j}'(c'm^{j},\ldots,c'm^{K})
	\bigr\},
	\end{aligned}
	\end{equation*}
	where $c'>0$ is a constant depending only on $c$ and $G_{j}$.
\end{lem}
In particular, we take $g\in T^{n}$. Hence $g$ corresponds to a word
$w$ in $T=T_{1}$ such that
\begin{equation*}
g= \llvert w \rrvert , \quad w\in G_{1}(m_{1},
\ldots,m_{K}), m\coloneqq \max \{m_{1},\ldots,m_{K}
\}\leq n.
\end{equation*}
Using the lemma inductively, we may find another word
$w'=(w_{1},\ldots,w_{K})$ in $T$ (where each $w_{j}$ in the bracket stands
for a subword in $T_{j}\setminus T_{j+1}$) and a constant $c>0$ such that
\begin{equation*}
g= \llvert w' \rrvert , \quad (w_{j},\ldots,w_{K})\in
G_{j}'(cn,\ldots,cn^{K}), 1 \leq j\leq K.
\end{equation*}
In other words, we have the following observation.
\begin{lem}
	\label{lem:sn nilpotent}%
	Let $n\in \mathbb{N}$. Each element $g\in T^{n}$ can be written in the
	form
	\begin{equation*}
	g=(t_{1}^{(1)})^{n_{11}}\cdots
	(t_{l_{1}}^{(1)})^{n_{1l_{1}}}\cdots
	(t_{1}^{(K)})^{n_{K1}} \cdots
	(t_{l_{K}}^{(K)})^{n_{Kl_{K}}},
	\end{equation*}
	where there exists a constant $c>0$ such that for $1\leq j\leq K$,
	\begin{equation*}
	n_{jk}\leq cn^{j}, \quad \text{if }1\leq k\leq
	r_{j},
	\end{equation*}
	and
	\begin{equation*}
	n_{jk}\leq N_{j}, \quad \text{if }r_{j}<k\leq
	l_{j}.
	\end{equation*}
\end{lem}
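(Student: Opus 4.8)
The plan is to make precise the inductive application of the key lemma of Bass that was sketched in the paragraphs preceding the statement, and then simply to read off the exponents from the resulting normal form. First I would represent $g\in T^{n}$ as $g=|w|$ for a word $w$ in $T=T_{1}$ of length at most $n$. Since a word of length $\le n$ contains at most $n$ letters from any subset of $T$, one has $\deg_{T_{1}\setminus T_{i+1}}(w)\le n\le n^{i}$ for every $1\le i\le K$ (using $n\ge 1$); hence $w\in G_{1}(n,n^{2},\dots,n^{K})$, i.e.\ $\deg^{(1)}(w)\le(n^{1},\dots,n^{K})$.

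Next I would run the induction over the lower central series. Applying the key lemma at level $j=1$ with $c=1$, $m=n$ produces a word $w^{(1)}\in G_{1}'(c_{1}n,c_{1}n^{2},\dots,c_{1}n^{K})$ with $g=|w^{(1)}|$, where $c_{1}$ depends only on $G_{1}$. By the very definition of $G_{1}'$, such a word has the form $(t_{1}^{(1)})^{n_{11}}\cdots(t_{l_{1}}^{(1)})^{n_{1l_{1}}}v_{1}$ with $v_{1}$ a word in $T_{2}$, the exponents $n_{1k}$ for $r_{1}<k\le l_{1}$ bounded by $N_{1}$, and total level-one degree $\sum_{k}n_{1k}=\deg_{T_{1}\setminus T_{2}}(w^{(1)})\le c_{1}n$. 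Because the letters of $v_{1}$ lie in $T_{2}$, they contribute nothing to $\deg_{T_{1}\setminus T_{i+1}}$, so $\deg_{T_{2}\setminus T_{i+1}}(v_{1})=\deg_{T_{1}\setminus T_{i+1}}(w^{(1)})\le c_{1}n^{i}$ for $i\ge 2$; thus $v_{1}\in G_{2}(c_{1}n^{2},\dots,c_{1}n^{K})$ and the key lemma applies again at level $2$. Iterating for $j=1,\dots,K$ peels off the generators of each graded piece, and setting $c=\max_{1\le j\le K}c_{j}$ (a constant depending only on $G$, since each $c_{j}$ depends only on $c_{j-1}$ and $G_{j}$, with $c_{0}=1$) yields the normal form $g=(t_{1}^{(1)})^{n_{11}}\cdots(t_{l_{K}}^{(K)})^{n_{Kl_{K}}}$.

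Finally I would extract the exponent bounds. At each level $j$ the membership $(w_{j},\dots,w_{K})\in G_{j}'(c_{j}n^{j},\dots,c_{j}n^{K})$ gives $\sum_{k=1}^{l_{j}}n_{jk}=\deg_{T_{j}\setminus T_{j+1}}\le c_{j}n^{j}\le cn^{j}$; since all $n_{jk}\ge 0$, each individual exponent satisfies $n_{jk}\le cn^{j}$ for $1\le k\le r_{j}$, while the defining restriction of $G_{j}'$ gives $n_{jk}\le N_{j}$ for $r_{j}<k\le l_{j}$. This is exactly the asserted estimate. The main point requiring care is the bookkeeping in the induction: one must check that after pulling out the level-$j$ generators the residual word $v_{j}$ still satisfies the degree hypotheses needed to invoke the key lemma at level $j+1$ with only the \emph{constant} inflating (the power of $n$ staying fixed at $n^{i}$ in slot $i$), and that the final constant $c$ is independent of both $n$ and $g$. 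The passage from the total-degree bound to the individual exponent bounds is then immediate from nonnegativity of the $n_{jk}$.
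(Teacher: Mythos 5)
Your proof is correct and follows essentially the same route as the paper, which likewise obtains the lemma by inductively applying Bass's key lemma down the lower central series (the paper compresses this induction into one sentence before the statement, whereas you spell out the bookkeeping). One cosmetic slip: the claimed equality $\deg_{T_{2}\setminus T_{i+1}}(v_{1})=\deg_{T_{1}\setminus T_{i+1}}(w^{(1)})$ should be the inequality $\leq$, since the level-one prefix letters of $w^{(1)}$ also lie in $T_{1}\setminus T_{i+1}$ for $i\geq 2$; the inequality is all that is needed, so the argument is unaffected.
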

In \cite{bass72polygrowthnil} and \cite{wolf68sovablegrowth} it is proved
that $G$ satisfies the following strict polynomial growth condition.
\begin{lem}
	\label{lem:strcit growth nil}%
	We have two constants $c_{1},c_{2}>0$ such that
	\begin{equation*}
	c_{1}n^{d(G)}\leq \llvert T^{n}
	\rrvert \leq c_{2}n^{d(G)},
	\end{equation*}
	where $ \llvert  \, \rrvert  $ denotes the cardinality of a subset and\vspace*{-3pt}
	\begin{equation*}
	d(G)=
	\sum _{j=1}^{K}jr_{j}.
	\end{equation*}
\end{lem}
Note that the upper bound in the above lemma follows directly from Lemma~\ref{lem:sn nilpotent}.

Now we will prove the following maximal inequalities, which are particular
cases studied in Theorem~\ref{thm:max doubling} and Example~\ref{ex:max doubling}(1).\vspace*{-3pt}
\begin{prop}
	\label{thm:p-p max}%
	Let $G$ be a finitely generated discrete group of polynomial growth, and
	let $S\subset G$ be a finite generating set. Fix $1<p<\infty $, and let
	$\alpha $ be an action $\alpha =(\alpha _{s})_{s\in G}$ of $G$ on
	$L_{p}(\mathcal{M})$ which satisfies
	\emph{$(\mathbf{A} ^{p}_{1})$--$(\mathbf{A} ^{p}_{3})$}. We consider the averaging
	operators\vspace*{-3pt}
	\begin{equation*}
	A_{n}=
	\frac{1}{ \llvert  S^{n} \rrvert  }
	\sum _{s\in S^{n}}\alpha _{s},\quad n\geq 1,
	\end{equation*}
	where $ \llvert  \, \rrvert  $ denotes the cardinality of a subset. Then
	$(A_{n})_{n\geq 1}$ is of strong type $(p,p)$.\vspace*{-3pt}
\end{prop}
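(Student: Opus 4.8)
The plan is to use the normal-form description of balls in nilpotent groups furnished by Lemma \ref{lem:sn nilpotent} to dominate the ball average $A_n$ by a composition of one-dimensional ergodic averages, one for each generator, and then to invoke the one-dimensional maximal inequality of Example \ref{ex:max doubling}(1)(ii) together with an order-domination argument for compositions of positive maximal operators.

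I begin with reductions. By Gromov's theorem $G$ contains a normal nilpotent subgroup $G_{0}$ of finite index; since the degree of polynomial growth is a commensurability invariant, and the word metric of $G$ restricts to a metric bi-Lipschitz to a word metric on $G_{0}$, one checks using positivity $(\mathbf{A}^p_3)$ and the uniform bound $(\mathbf{A}^p_2)$ that, after splitting $S^{n}$ into finitely many cosets $(S^{n}\cap G_{0}h_{i})h_{i}^{-1}\subset G_{0}$, one has $A_{n}x\leq C\sum_{i}A''_{m(n)}(\alpha_{h_{i}}x)$ for $x\geq 0$, where $A''_{m}$ denotes the average over $G_{0}$-balls of radius $m$ and $m(n)\asymp n$. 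Thus $\sup_{n}^{+}A_{n}x$ is controlled by finitely many $G_{0}$-ball maximal functions, and I may assume $G$ is nilpotent. Moreover any two word metrics on $G$ are bi-Lipschitz and, by Lemma \ref{lem:strcit growth nil}, $|S^{n}|\asymp|T^{n}|\asymp n^{d(G)}$, so for $x\geq 0$ the average $A_{n}x$ is dominated up to a constant by the average over $T^{C_{0}n}$, $T$ being the commutator-closed generating set of Lemma \ref{lem:sn nilpotent}. Finally, splitting a general $x$ into four positive pieces and using that $\|\cdot\|_{L_{p}(\mathcal{M};\ell_{\infty})}$ is a norm, it suffices to treat $x\in L_{p}^{+}(\mathcal{M})$.

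The core step is the domination. By Lemma \ref{lem:sn nilpotent} every $g\in T^{n}$ is written in the fixed order $g=\prod_{j,k}(t_{k}^{(j)})^{n_{jk}}$, with the exponent of each free generator ($k\leq r_{j}$) ranging over an interval $I_{jk}$ of length $O(n^{j})$ and that of each torsion generator ($r_{j}<k\leq l_{j}$) over a set of size $\leq 2N_{j}+1$. Since $\alpha$ is a homomorphism and each $\alpha_{g}$ is positive, summing the positive terms $\alpha_{g}x$ over the injective image of $T^{n}$ inside the product box, and using that a sum of a fixed-order operator product over a product set factors as the composition of the one-variable sums, I obtain
\[
\sum_{g\in T^{n}}\alpha_{g}x\;\leq\;\prod_{(j,k)}\Big(\sum_{m\in I_{jk}}\alpha_{t_{k}^{(j)}}^{\,m}\Big)x,
\]
the right-hand product denoting composition in the fixed order. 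Writing each factor as $|I_{jk}|\,M^{(j,k)}_{n}$ with $M^{(j,k)}_{n}$ the normalized average, the decisive counting is that $\prod_{(j,k)}|I_{jk}|\asymp\prod_{j}n^{jr_{j}}=n^{\sum_{j}jr_{j}}=n^{d(G)}\asymp|T^{n}|$ by Lemma \ref{lem:strcit growth nil}, so the normalizations match and
\[
A_{n}x\;\leq\;C\,\prod_{(j,k)}M^{(j,k)}_{n}\,x,\qquad x\geq 0.
\]
It remains to pass to the supremum over $n$ and bound the multi-parameter maximal operator. For each free generator the map $\alpha_{t_{k}^{(j)}}$ is positive, invertible with positive inverse $\alpha_{(t_{k}^{(j)})^{-1}}$, and power-bounded by $(\mathbf{A}^p_2)$, so Example \ref{ex:max doubling}(1)(ii) shows its one-dimensional maximal average is of strong type $(p,p)$; by Proposition \ref{prop:nc max funct} this yields, for each positive $h$, a single positive dominating element. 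The torsion factors are fixed positive operators of bounded norm. Enlarging $\sup_{n}$ to a supremum over independent scales and peeling the factors off from the inside out — replacing $M^{(j,k)}_{N}h$ by its positive dominating element and then applying the next positive operator, which preserves order — produces one positive $a$ with $\|a\|_{p}\leq C'\|x\|_{p}$ dominating $\prod_{(j,k)}M^{(j,k)}_{N_{jk}}x$ for all tuples of scales. Hence $\big\|\sup_{n}^{+}A_{n}x\big\|_{p}\leq C''\|x\|_{p}$, the desired strong type $(p,p)$ bound.

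The main obstacle is precisely the domination-and-counting step: one must arrange the normal form so that the exponents genuinely fill a product box of the correct sizes, and then verify that the product of the one-dimensional scales reproduces exactly the growth degree $d(G)=\sum_{j}jr_{j}$ of the ball. It is this matching, guaranteed jointly by Lemma \ref{lem:sn nilpotent} and Lemma \ref{lem:strcit growth nil}, that keeps the normalization constants bounded and makes the reduction to one-dimensional maximal inequalities lossless; the composition argument itself is then routine given the order-theoretic description of the noncommutative maximal norm in Definition \ref{defn:type-ineq} and Proposition \ref{prop:nc max funct}.
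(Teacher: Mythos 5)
Your proof is correct, and its combinatorial core is exactly the paper's: Gromov's theorem (Lemma \ref{lem:gromov thm}) reduces to a normal nilpotent subgroup of finite index, Bass's normal form (Lemma \ref{lem:sn nilpotent}) dominates the ball sum, for positive $x$, by a fixed-order composition of one-parameter sums over the generators, the strict growth bound of Lemma \ref{lem:strcit growth nil} matches the normalization $n^{d(G)}$, and the one-dimensional strong type $(p,p)$ inequalities are then peeled off iteratively using positivity and the order description of the maximal norm. Two deviations are worth recording. First, the paper opens by applying the transference principle (Theorem \ref{thm:transference}) and then runs the whole argument for translations on $L_p(G;L_p(\mathcal M))$; you instead apply the one-dimensional theorem directly to the operators $\alpha_{t_k^{(j)}}$ on $L_p(\mathcal M)$, which is legitimate --- under $(\mathbf A^p_1)$--$(\mathbf A^p_3)$ each $\alpha_{t_k^{(j)}}$ is positive, invertible with positive inverse $\alpha_{(t_k^{(j)})^{-1}}$, and power bounded in both directions, which is all the one-dimensional result needs --- and in fact makes the transference step superfluous. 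Second, and this is the one real defect of your write-up relative to the paper's purpose: you cite Example \ref{ex:max doubling}(1)(ii) for the one-dimensional maximal inequality, but that Example is itself deduced from Theorem \ref{thm:max doubling}, i.e.\ from the probabilistic approach of Section \ref{sect:max} (transference plus the operator-valued Hardy--Littlewood inequality), whereas the paper quotes the external result \cite[Theorem 4.1]{jungexu07erg} precisely so that Section \ref{sect:group} constitutes an independent, alternative proof. There is no circularity in what you do, since the Example is established without any appeal to this Proposition, but as written your argument is parasitic on Section \ref{sect:max}; citing Junge--Xu's theorem instead restores the intended independence at no cost.
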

The proposition relies on the following characterization of groups of polynomial
growth by Gromov \cite{gromov81polygrowth}.\vspace*{-3pt}
\begin{lem}
	\label{lem:gromov thm}%
	Any finitely generated discrete group of polynomial growth contains a finitely
	generated nilpotent subgroup of finite index.\vspace*{-3pt}
\end{lem}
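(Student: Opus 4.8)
The plan is to descend to a finite-index nilpotent subgroup via Gromov's theorem (Lemma~\ref{lem:gromov thm}), use Bass's canonical form for balls in nilpotent groups (Lemmas~\ref{lem:sn nilpotent} and \ref{lem:strcit growth nil}) to dominate the ball averages by an iterated composition of one-dimensional ergodic averages, and then invoke the one-parameter maximal inequality of Example~\ref{ex:max doubling}(1)(ii) together with a positivity-based peeling argument.

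\emph{Step 1: reduction to a nilpotent subgroup.} By Lemma~\ref{lem:gromov thm} I would choose a finitely generated nilpotent subgroup $H\leq G$ of finite index, equipped with the Bass generating set $T$ of Lemma~\ref{lem:sn nilpotent}, and write $G=\bigsqcup_{i=1}^{M}Hh_i$. Each $t\in T$ has bounded $S$-length, so $T^k\subset S^{C_0k}$, and conversely $(S^n)h_i^{-1}\cap H\subset T^{\lfloor C'n\rfloor}$ for all large $n$. For $x\in L_p^+(\mathcal M)$, positivity $(\mathbf A^p_3)$ lets me factor $\alpha_{g}x=\alpha_{\eta}(\alpha_{h_i}x)$ for $g=\eta h_i$ and enlarge each inner sum to all of $T^{\lfloor C'n\rfloor}$, giving
\[
\sum_{g\in S^n}\alpha_g x\leq\sum_{i=1}^{M}\sum_{\eta\in T^{\lfloor C'n\rfloor}}\alpha_{\eta}(\alpha_{h_i}x).
\]
Writing $B^H_m x=\frac{1}{|T^m|}\sum_{\eta\in T^m}\alpha_\eta x$, and using $|T^{\lfloor C'n\rfloor}|\lesssim n^{d(H)}$ together with $|S^n|\gtrsim n^{d(H)}$ (both from Lemma~\ref{lem:strcit growth nil} and $T^k\subset S^{C_0k}$), the ratio $|T^{\lfloor C'n\rfloor}|/|S^n|$ stays bounded, whence $A_n x\leq C\sum_{i=1}^M B^H_{\lfloor C'n\rfloor}(\alpha_{h_i}x)$. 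By Proposition~\ref{prop:nc max funct}(1), the triangle inequality in $L_p(\mathcal M;\ell_\infty)$, and boundedness of the finitely many $\alpha_{h_i}$, the strong type $(p,p)$ of $(A_n)$ reduces to that of $(B^H_m)_{m\geq1}$.

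\emph{Step 2: nilpotent balls as iterated one-dimensional averages.} By Lemma~\ref{lem:sn nilpotent}, $T^m$ sits inside the box of canonical products $(t^{(1)}_1)^{a_{11}}\cdots(t^{(K)}_{l_K})^{a_{Kl_K}}$ with $|a_{jk}|\leq cm^{j}$ for $k\leq r_j$ and $|a_{jk}|\leq N_j$ otherwise. Positivity again bounds $\sum_{\eta\in T^m}\alpha_\eta x$ by the sum over the full box, and since $\alpha$ is a homomorphism this factors into an iterated composition of the partial sums $S^{(t)}_{N}=\sum_{a}\alpha_{t}^{a}$ along each generator. The cardinality of the box is $\asymp m^{\sum_j jr_j}=m^{d(H)}\asymp|T^m|$ by Lemma~\ref{lem:strcit growth nil}, so after normalization the prefactor stays bounded in $m$ and I arrive at
\[
B^H_m x\leq C\, M^{(1)}_{R_1(m)}M^{(2)}_{R_2(m)}\cdots M^{(L)}_{R_L(m)}x,\qquad x\in L_p^+(\mathcal M),
\]
where the $M^{(i)}_N$ are the normalized ergodic averages of the generators and $R_i(m)=O(m^{j})$. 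Finally, since each $(R_1(m),\dots,R_L(m))$ is a single multi-index, Proposition~\ref{prop:nc max funct}(1) gives $\sup_m^+B^H_m x\leq C\sup_{N_1,\dots,N_L}^+M^{(1)}_{N_1}\cdots M^{(L)}_{N_L}x$. Each generator $t^{(i)}$ acts through the positive invertible operator $\alpha_{t^{(i)}}$ with $\sup_{k}\|\alpha_{t^{(i)}}^{k}\|<\infty$, so by the case $G=\mathbb Z$ of Example~\ref{ex:max doubling}(1)(ii) every family $(M^{(i)}_N)_N$ is of strong type $(p,p)$, with constant $C_i$; I would then peel from the inside, choosing $a_L\geq0$ with $M^{(L)}_{N_L}x\leq a_L$ and $\|a_L\|_p\leq C_L\|x\|_p$, applying the positive $M^{(L-1)}_{N_{L-1}}$ to obtain a dominating $a_{L-1}$, and so on, until a single $a_1\geq0$ dominates the entire family with $\|a_1\|_p\leq C_1\cdots C_L\|x\|_p$.

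\emph{Main obstacle.} The one genuinely delicate point is this last multi-parameter estimate: the averages along distinct generators do not commute, and the noncommutative maximal norm $L_p(\mathcal M;\ell_\infty)$ does not permit iterating maximal inequalities the way one does classically. It is exactly the positivity hypothesis $(\mathbf A^p_3)$ that rescues the argument, since it guarantees that the single dominating element produced at each stage survives transport through the next positive averaging operator, collapsing the $L$-parameter inequality into a product of one-parameter ones. A secondary but indispensable point is the counting in Step~2, where the strict two-sided polynomial growth of Lemma~\ref{lem:strcit growth nil} is precisely what keeps the normalization constant bounded uniformly in $m$; without the exact growth degree $d(H)=\sum_j jr_j$ matching the box cardinality, the domination would lose a factor growing in $m$.
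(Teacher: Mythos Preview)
Your proposal does not address the stated lemma at all. Lemma~\ref{lem:gromov thm} is Gromov's celebrated theorem on groups of polynomial growth, and the paper does not prove it: it is simply quoted from \cite{gromov81polygrowth} as a black box. What you have written is instead a proof of Proposition~\ref{thm:p-p max} (the strong type $(p,p)$ inequality for ball averages), in which you \emph{invoke} Lemma~\ref{lem:gromov thm} at the outset rather than establish it. A proof of Gromov's theorem requires entirely different machinery (e.g.\ the construction of a Lie group as an asymptotic cone, Montgomery--Zippin, and the solution of Hilbert's fifth problem, or one of the more recent approaches of Kleiner or Ozawa); none of this appears in your argument, nor would it be expected in a paper of this nature.

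If one reinterprets your proposal as an attempt at Proposition~\ref{thm:p-p max}, then it is essentially the same strategy as the paper's: reduce to a nilpotent subgroup of finite index via Gromov, use Bass's canonical form (Lemma~\ref{lem:sn nilpotent}) and the strict growth bounds (Lemma~\ref{lem:strcit growth nil}) to dominate the ball average by an iterated product of one-dimensional ergodic averages, and then peel off the generators one at a time using positivity and the one-parameter maximal inequality. The paper carries this out after first applying the transference principle (Theorem~\ref{thm:transference}) to reduce to the translation action on $L_p(G;L_p(\mathcal M))$, whereas you work directly with the abstract action; this is a cosmetic difference. One small point: the paper appeals to \cite[Theorem~4.1]{jungexu07erg} for the one-parameter input rather than to Example~\ref{ex:max doubling}(1)(ii), but the effect is the same.
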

We also need the following fact.\vspace*{-3pt}
\begin{lem}
	\label{lem:from nilpotent to general}%
	Let $G$ be a finitely generated group of polynomial growth. Let $H$ be
	a normal subgroup of $G$ of finite index. Then $H$ is finitely generated.
	Let $U\subset G$ be a finite system of representatives of the cosets
	$G/H$ with $e\in U$. Let $T\subset H$ be a finite generating set of~$H$. Write $V=U\cup T$. Then there exists an integer $N$ such
	that\vspace*{-3pt}
	\begin{equation*}
	\forall m\in \mathbb{N},\quad V^{m}\subset
	UT^{(3N+1)m}.
	\end{equation*}
\end{lem}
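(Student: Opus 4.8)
The plan is to prove the two assertions separately. The finite generation of $H$ is the classical fact that a finite-index subgroup of a finitely generated group is finitely generated, so I would first dispose of it by the Schreier method: if $G=\langle S\rangle$ with $S$ finite and symmetric and $U$ is the given transversal (with $\overline{x}\in U$ denoting the representative of $xH$), then the finite set of Schreier generators $\{\,u s\,\overline{us}^{\,-1}:u\in U,\ s\in S\,\}$ consists of elements of $H$ and generates $H$. This guarantees that a finite generating set $T\subset H$ as in the statement exists; I take $T$ symmetric so that $T^{k}$ is the set of products of at most $k$ elements of $T$.

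Next I would fix the auxiliary constant $N$. Because $H\triangleleft G$ and both $U$ and $T$ are finite, only finitely many elements of $H$ arise from the two elementary moves I will use, and each is a finite word in $T$; so I can choose an integer $N$ large enough that simultaneously (i) for all $u\in U\cup U^{-1}$ and $t\in T$ the conjugate $u^{-1}tu$ lies in $H$ and has $T$-length $\le N$, and (ii) for all $u,u'\in U$, writing $u'u=\overline{u'u}\,h$ with $\overline{u'u}\in U$ and $h\in H$, one has $|h|_{T}\le N$ (and likewise for the bounded corrections coming from inverses $u^{-1}$). Here normality of $H$ is what makes $u^{-1}tu\in H$, and finiteness of $U,T$ is what makes $N$ finite.

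The core of the argument is an induction maintaining a right-to-left normal form. I would prove that every product $v_{1}\cdots v_{m}$ with $v_{i}\in V=U\cup T$ can be written as $u\,w$ with $u\in U$ and $w$ a word in $T$ of length at most a fixed constant times $m$ (one checks the constant can be taken to be $3N+1$). The base case is the empty product $e=e\cdot e$, using $e\in U$. For the inductive step, assume $v_{2}\cdots v_{m}=u\,w$ and prepend $v_{1}$: if $v_{1}=t\in T$, I rewrite $t\,u=u\,(u^{-1}tu)$ and absorb $u^{-1}tu\in T^{\le N}$ onto the left of $w$, keeping the same representative $u$; if $v_{1}=u'\in U$, I use $u'u=\overline{u'u}\,h$ with $h\in T^{\le N}$, so the product becomes $\overline{u'u}\,(h\,w)$, again of the form $U\cdot(T\text{-word})$. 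In either case the left factor stays a single element of $U$ and the $T$-length increases by at most a constant, which yields $V^{m}\subset U\,T^{(3N+1)m}$.

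The step I expect to be the real obstacle is precisely the length control. The naive approach of sorting all the $U$-letters to the left forces one to conjugate the entire accumulated $T$-word by $U$-elements, and since each conjugation can multiply the length by $N$, this blows up multiplicatively (exponentially in $m$) and destroys any linear bound. The essential idea that circumvents this is to carry the single normal form $u\,w$ and to append letters one at a time, so that at every step only a \emph{bounded} element — a single generator $u^{-1}tu$, or the $H$-part $h$ of a product $u'u$ of two representatives — is ever conjugated or rewritten into $T$; the long word $w$ is never itself conjugated. This keeps the per-letter cost a fixed constant and delivers the linear bound $(3N+1)m$. I note that polynomial growth of $G$ plays no role in this particular lemma, which is valid for any finitely generated $G$ with a finite-index normal subgroup $H$.
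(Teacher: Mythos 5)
Your proof is correct, and in substance it follows the same route as the paper, whose entire proof is a citation of the proof of \cite[Theorem 3.11]{wolf68sovablegrowth} together with one prescription of the constant: choose $N$ so that every product $u_1^{\epsilon}u_2^{\eta}$ (with $u_1,u_2\in U$, $\epsilon,\eta\in\{\pm 1\}$) can be written as $ut$ with $u\in U$ and $t\in T^{N}$. Your letter-by-letter induction maintaining the normal form $u\,w$ ($u\in U$, $w$ a $T$-word) is precisely the rewriting mechanism behind that citation, and it is more complete than the paper's hint: the prepending scheme needs both of your bounded moves --- the conjugation bound on $u^{-1}tu$ (where normality of $H$ enters) as well as the pair-product bound --- whereas the paper records only the latter. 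Your diagnosis of why the ``sort all $U$-letters to the left at once'' strategy fails (iterated conjugation of long words gives exponential blow-up) is exactly the point, and your bookkeeping in fact yields the stronger inclusion $V^{m}\subset UT^{Nm}$, of which $(3N+1)m$ is a harmless weakening. One caveat that you handled correctly but should present as a necessity rather than a convenience: the lemma is false as literally stated unless $T$ is symmetric and $T^{k}$ is read as products of \emph{at most} $k$ letters; for instance, in the infinite dihedral group $G=\langle s,t\mid s^{2}=e,\ sts=t^{-1}\rangle$ with $H=\langle t\rangle$, $U=\{e,s\}$, $T=\{t\}$, the element $sts=t^{-1}$ lies in $V^{3}$ but in no $UT^{k}$. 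Your symmetric-$T$, balls-not-spheres reading is the one intended by the paper and by Wolf. Finally, you are right that polynomial growth plays no role here; in fact even normality of $H$ could be dispensed with (replace the move $tu=u(u^{-1}tu)$ by $tu=\overline{tu}\,(\overline{tu}^{\,-1}tu)$), though the conjugation formulation is how both you and the paper's source proceed.
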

\begin{proof}
	This is given in the proof of
	\cite[Theorem~3.11]{wolf68sovablegrowth}. Let $N$ be an integer large enough
	such that for all $u_{1}^{\epsilon },u_{2}^{\eta }\in U$ with
	$\epsilon ,\eta \in \{\pm 1\}$, there exist $u\in U$ and
	$t\in T^{N}$ satisfying $u_{1}^{\epsilon }u_{2}^{\eta }=ut$. Then $N$ satisfies
	the desired condition.\vspace*{-3pt}
\end{proof}
Now we deduce the desired result.\vadjust{\goodbreak}
\begin{proof}[Proof of Proposition~\ref{thm:p-p max}]
	By Theorem~\ref{thm:transference}, it suffices to consider the case where
	$\alpha $ is an action on $L_{p}(G;\break L_{p}(\mathcal{M}))$ by translation.
	By Lemma~\ref{lem:gromov thm}, we may find a nilpotent subgroup
	$H\subset G$ of finite index. As is explained in
	\cite[Theorem~3.11]{wolf68sovablegrowth}, $H$ can be taken normal by replacing
	$H$ with $\bigcap _{s\in G}sHs^{-1}$. Now let
	$T=\{t_{k}^{(j)}:1\leq k\leq l_{j},1\leq j\leq K\}$ be a finite generating
	set of the nilpotent group $H$, where $T$ and the indices $k$, $j$ are chosen
	in the same manner as in \eqref{eq:tjminus}. Also, let $U$ and $V$ be given
	as in the previous lemma. Consider
	$x\in L_{p}^{+}(G;L_{p}(\mathcal{M}))$, and write
	\begin{equation*}
	\tilde{A}_{n}x=
	\frac{1}{ \llvert  V^{n} \rrvert  }
	\sum _{s\in V^{n}}\alpha _{s}x,\quad n \in \mathbb{N}.
	\end{equation*}
	Since the operators $\alpha _{s}$ extend to positive operators on
	$L_{p}(G;L_{p}(\mathcal{M}))$, by Lemma~\ref{lem:sn nilpotent} and Lemma~\ref{lem:from nilpotent to general}, there exists a constant $c>0$ for
	all $n\in \mathbb{N}$ such that
	\begin{align*}
	\sum _{s\in V^{n}}\alpha _{s}x &\leq
	\sum _{h\in U}\alpha _{h}
	\sum _{t
		\in T^{(3N+1)n}}\alpha _{t}x
	\\
	&\leq
	\sum _{h\in U}\alpha _{h}
	\sum _{1\leq j\leq K}
	\sum _{
		\substack{
			1\leq n_{jk}\leq cn^{j}
			\\
			1\leq k\leq r_{j}
		}
	}
	\sum _{
		\substack{
			1\leq n_{jk}\leq N_{j}
			\\
			r_{j}<k\leq l_{j}
		}
	}\alpha _{t_{1}^{(1)}}^{n_{11}}\cdots
	\alpha _{t_{l_{1}}^{(1)}}^{n_{1l_{1}}} \cdots \alpha
	_{t_{1}^{(K)}}^{n_{K1}}\cdots \alpha _{t_{l_{K}}^{(K)}}^{n_{Kl_{K}}}x.
	\end{align*}
	Recall that by Lemma~\ref{lem:strcit growth nil} we may find a constant
	$c'>0$ such that
	\begin{equation*}
	\llvert V^{n} \rrvert \geq c'n^{\sum _{j=1}^{K}jr_{j}}.
	\end{equation*}
	So we may find a constant $c''>0$ satisfying
	\begin{equation*}
	\begin{aligned}
	\tilde{A}_{n}x\leq{}& c''
	\sum _{h\in U}\alpha _{h}
	\frac{1}{n^{\sum _{j=1}^{K}jr_{j}}}
	\\
	&{}\times
	\sum _{1\leq j\leq K}
	\sum _{
		\substack{
			1\leq n_{jk}\leq cn^{j}
			\\
			1\leq k\leq r_{j}
		}
	}
	\sum _{
		\substack{
			1\leq n_{jk}\leq N_{j}
			\\
			r_{j}<k\leq l_{j}
		}
	}\alpha _{t_{1}^{(1)}}^{n_{11}}\cdots
	\alpha _{t_{l_{1}}^{(1)}}^{n_{1l_{1}}} \cdots \alpha
	_{t_{1}^{(K)}}^{n_{K1}}\cdots \alpha _{t_{l_{K}}^{(K)}}^{n_{Kl_{K}}}x.
	\end{aligned}
	\end{equation*}
	Note that by \cite[Theorem~4.1]{jungexu07erg}, for each
	$1\leq j\leq K$ and $1\leq k\leq r_{j}$ there exists a constant
	$C_{p}$ depending only on $p$ such that
	\begin{equation*}
	\Bigl\llVert {
		\sup _{n}}^{+}
	\frac{1}{cn^{j}}
	\sum _{l=1}^{cn^{j}}\alpha _{t_{k}^{(j)}}^{l}x
	\Bigr\rrVert _{p}\leq C_{p} \llVert x \rrVert _{p},
	\quad x\in L_{p} \bigl(G;L_{p}(\mathcal{M}) \bigr).
	\end{equation*}
	Applying the inequality iteratively, we obtain a constant $C_{p}'>0$ such
	that
	\begin{equation*}
	\bigl\llVert {
		\sup _{n}}^{+}\tilde{A}_{n}x \bigr
	\rrVert _{p}\leq C_{p}' \llVert x \rrVert _{p},
	\quad x\in L_{p} \bigl(G;L_{p}(\mathcal{M}) \bigr).
	\end{equation*}
	Since $S$ and $V$ are both finite, we may find two integers $k$ and
	$k'$ with
	\begin{equation*}
	S\subset V^{k},\quad\quad V\subset S^{k'}.
	\end{equation*}
	So the strong type $(p,p)$ inequality for $A_{n}$ follows as well.
\end{proof}

\section{Individual ergodic theorems}
\label{sect:individual}

In this section, we apply the maximal inequalities to study the pointwise
ergodic convergence in Theorems~\ref{thm:main intro} and~\ref{thm:main poly}.

We will use the following analogue for the noncommutative setting of the
usual almost everywhere convergence. The definition is introduced by Lance
\cite{lance76erg} (see also \cite{jajte85nclimitbook}).
\begin{defn}%
	\label{defn:au convergence}
	Let $\mathcal{M}$ be a von Neumann algebra equipped with a normal
	semifinite faithful trace~$\tau$. Let
	$x_{n},x\in L_{0}(\mathcal{M})$. We say that
	$(x_{n})_{n\geq 1}$ converges
	\emph{bilaterally almost uniformly} (\emph{b.a.u.} for short) to $x$ if for
	every $\varepsilon >0$ there is a projection
	$e\in \mathcal{M}$ such that
	\begin{equation*}
	\tau (e^{\perp })<\varepsilon \quad\quad \text{and}\quad\quad
	\lim _{n\to \infty } \bigl\llVert e(x_{n}-x)e \bigr\rrVert
	_{\infty }=0,
	\end{equation*}
	and that it converges \emph{almost uniformly} (\emph{a.u.} for short) to
	$x$ if for every $\varepsilon >0$ there is a projection
	$e\in \mathcal{M}$ such that
	\begin{equation*}
	\tau (e^{\perp })<\varepsilon \quad\quad \text{and}\quad\quad
	\lim _{n\to \infty } \bigl\llVert (x_{n}-x)e \bigr\rrVert
	_{\infty }=0.
	\end{equation*}
\end{defn}

In the case of classical probability spaces, the definition above is equivalent
to the usual almost everywhere convergence in terms of Egorov's theorem.

Now let $G$ be an amenable locally compact group, and let
$(F_{n})_{n\geq 1}$ be a F{\o}lner sequence in~$G$. Let
$1\leq p\leq \infty $. Assume that $\alpha =(\alpha _{s})_{s\in G}$ is
an action on $L_{p}(\mathcal{M})$ which satisfies
$(\mathbf{A} ^{p}_{1})$--$(\mathbf{A} ^{p}_{3})$. Denote by $A_{n}$ the corresponding
averaging operators
\begin{equation*}
A_{n}x=
\frac{1}{m(F_{n})}
\int _{F_{n}}\alpha _{s}x\,dm(s),\quad x\in
L_{p}( \mathcal{M}).
\end{equation*}
We keep the notation $\mathcal{F}_{p}\subset L_{p}(\mathcal{M})$ and
$P$ introduced in Section~\ref{sub:Actions-by-amenable}.

We first consider the case where
$\alpha $ extends to an action on $L_{1}(\mathcal{M})+\mathcal{M}$. In this
case we use a standard argument for b.a.u. convergences adapted from
\cite{hong17dimfree,jungexu07erg}, and \cite{yeadon77max}. The
following lemma from \cite{defantjunge04maxau} will be useful.

\begin{lem}%
	\label{lem:czero}
	Let $1\leq p<\infty $. If $(x_{n})\in L_{p}(\mathcal{M}; c_{0})$, then
	$x_{n}$ converges b.a.u. to~$0$. If
	$(x_{n})\in L_{p}(\mathcal{M}; c_{0}^{c})$ with $2\leq p<\infty $, then
	$x_{n}$ converges a.u. to~$0$.
\end{lem}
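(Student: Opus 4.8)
The plan is to deduce both assertions from a single mechanism: membership in a vector-valued maximal space yields, via Chebyshev-type spectral cutoffs of the factorizing operators, projections that simultaneously compress \emph{every} term of the sequence; trivial convergence of finitely supported sequences is then upgraded to the whole closure by a Banach-principle argument. I would run the two-sided case ($L_p(\mathcal M;c_0)$, b.a.u.) and the one-sided case ($L_p(\mathcal M;c_0^c)$, a.u.) in parallel, the only difference being whether one compresses from both sides or only from the right.

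First I would record the underlying weak-type estimates from the factorization definitions in Section \ref{sect:max ineq}. Given $(x_n)\in L_p(\mathcal M;\ell_\infty)$, fix up to $\varepsilon$ a factorization $x_n=ay_nb$ with $a,b\in L_{2p}(\mathcal M)$, $\sup_n\|y_n\|_\infty\le 1$, which after rescaling may be taken with $\|a\|_{2p}=\|b\|_{2p}\simeq\|(x_n)\|_{L_p(\mathcal M;\ell_\infty)}^{1/2}$. For $\lambda>0$ set $e=\chi_{[0,\lambda]}(|a^*|)\wedge\chi_{[0,\lambda]}(|b|)$, where $|a^*|=(aa^*)^{1/2}$. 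Then $\|ea\|_\infty=\|e|a^*|^2e\|_\infty^{1/2}\le\lambda$ and $\|be\|_\infty=\|e|b|^2e\|_\infty^{1/2}\le\lambda$, so $\|ex_ne\|_\infty=\|eay_nbe\|_\infty\le\|ea\|_\infty\|be\|_\infty\le\lambda^2$, while the bound $(\wedge_i f_i)^\perp\le\sum_i f_i^\perp$ together with Chebyshev gives $\tau(e^\perp)\le 2\lambda^{-2p}\|(x_n)\|_{L_p(\mathcal M;\ell_\infty)}^p$. The one-sided analogue for $(x_n)\in L_p(\mathcal M;\ell_\infty^c)$ is cleaner: from $x_n=y_na$ with $\sup_n\|y_n\|_\infty\le1$ and $\|a\|_p\simeq\|(x_n)\|_{L_p(\mathcal M;\ell_\infty^c)}$, the projection $e=\chi_{[0,\lambda]}(|a|)$ gives $\sup_n\|x_ne\|_\infty\le\lambda$ and $\tau(e^\perp)\le\lambda^{-p}\|(x_n)\|_{L_p(\mathcal M;\ell_\infty^c)}^p$.

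Next comes the limiting step. Since finite sequences are dense in $L_p(\mathcal M;c_0)$, respectively in $L_p(\mathcal M;c_0^c)$, I would choose finitely supported $z^{(k)}$ with $\|(x_n-z^{(k)}_n)\|$ arbitrarily small, apply the relevant estimate to $x-z^{(k)}$ at level $\lambda_k=2^{-k}$, and obtain projections $e_k$ with $\sup_n\|e_k(x_n-z^{(k)}_n)e_k\|_\infty\le\lambda_k^2$ (resp. $\sup_n\|(x_n-z^{(k)}_n)e_k\|_\infty\le\lambda_k$) and with $\tau(e_k^\perp)$ as small as desired, simply by taking the approximants $z^{(k)}$ close enough. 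Setting $e=\bigwedge_k e_k$ keeps $\tau(e^\perp)\le\sum_k\tau(e_k^\perp)<\varepsilon$. Since $e\le e_k$ and $z^{(k)}_n=0$ for $n$ large, splitting $x_n=(x_n-z^{(k)}_n)+z^{(k)}_n$ yields $\limsup_n\|ex_ne\|_\infty\le\lambda_k^2$ (resp. $\limsup_n\|x_ne\|_\infty\le\lambda_k$) for every $k$, hence the limit is $0$. This is precisely b.a.u. (resp. a.u.) convergence to $0$ in the sense of Definition \ref{defn:au convergence}.

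The main obstacle is twofold. In the two-sided case the difficulty is that the factors $a,b$ live in $L_{2p}$, so controlling $\|ex_ne\|_\infty$ forces one to cut down $|a^*|$ and $|b|$ separately and intersect the resulting spectral projections, which is exactly what costs the factor $2$ and the exponent $2p$ in the weak-type bound; getting the arithmetic to close for all $1\le p<\infty$ is the delicate point. In the column case the genuine restriction $p\ge 2$ enters here: the space $L_p(\mathcal M;\ell_\infty^c)$ carries the good (Banach-space, single-factor $x_n=y_na$) structure and the interpolation behavior recorded in \eqref{eq:c-interp-vectorLp} only when $p\ge 2$, and it is exactly this structure that the density of finite sequences and the one-sided cutoff above rely upon; for $p<2$ the one-sided maximal space degenerates and the argument is unavailable.
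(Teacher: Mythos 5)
The paper gives no proof of this lemma at all --- it is quoted directly from Defant and Junge \cite{defantjunge04maxau} --- so your argument can only be compared with the standard one from that reference, which it essentially reproduces, and it is correct. Both halves of your mechanism check out: the weak-type estimates (with $\|ea\|_\infty\le\lambda$ following from $e\le\chi_{[0,\lambda]}(|a^*|)$, the bound $\bigl(\bigwedge_i f_i\bigr)^\perp\le\sum_i f_i^\perp$, and Chebyshev plus traciality giving $\tau(e^\perp)\le 2\lambda^{-2p}\|(x_n)\|_{L_p(\mathcal M;\ell_\infty)}^{p}$ in the two-sided case), and the approximation step, where choosing the finite sequences $z^{(k)}$ so close that $\tau(e_k^\perp)\le\varepsilon 2^{-k}$ at level $\lambda_k=2^{-k}$, intersecting $e=\bigwedge_k e_k$, and using $z^{(k)}_n=0$ for large $n$ gives $\limsup_n\|ex_ne\|_\infty\le\lambda_k^2$ (resp.\ $\limsup_n\|x_ne\|_\infty\le\lambda_k$) for every $k$; this is exactly b.a.u.\ (resp.\ a.u.) convergence to $0$ in the sense of Definition \ref{defn:au convergence}.

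One remark on your closing paragraph, which is commentary rather than proof: the hypothesis $p\ge2$ in the column case is never actually invoked in your mechanism --- neither the one-sided cutoff nor the density of finite sequences appeals to the interpolation identity \eqref{eq:c-interp-vectorLp}, and your estimates formally make sense whenever a factorization $x_n=y_na$ exists. The restriction is inherited from the theory of the spaces themselves: it is only for $p\ge 2$ that $\|\cdot\|_{L_p(\mathcal M;\ell_\infty^c)}$ is known to behave as a genuine Banach norm (via comparison with $\|(x_n^*x_n)\|_{L_{p/2}(\mathcal M;\ell_\infty)}^{1/2}$, which is also why \eqref{eq:c-interp-vectorLp} carries the restriction $p_0\ge2$), so that $L_p(\mathcal M;c_0^c)$ is a well-defined closed subspace. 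Once the space is meaningful, your cutoff-and-approximation argument applies verbatim; attributing the restriction to the cutoff or to density is the only inaccurate claim in the write-up, and it does not affect the validity of the proof for the stated range.
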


We will also use the following noncommutative analogue of the Banach principle
given by \cite{litvinov17au} and
\cite[Theorem~3.1]{chilinlitvinov16ausymmetric}.

\begin{lem}%
	\label{lem:principle litvinov}
	Let $1\leq p<\infty $, and let $S=(S_{n})_{n\geq 1}$ be a sequence of additive
	maps from $L_{p}^{+}(\mathcal{M})$ to
	$L_{0}^{+}(\mathcal{M})$. Assume that $S$ is of weak type
	$(p,p)$. Then the set
	\begin{equation*}
	\mathcal{C} = \bigl\{ x\in L_{p}(\mathcal{M}) : ( S_{n} x
	)_{n
		\geq 1} \text{ converges a.u.} \bigr\}
	\end{equation*}
	is closed in $L_{p}(\mathcal{M})$.
\end{lem}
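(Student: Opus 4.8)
The plan is to prove this noncommutative Banach principle — that the set $\mathcal{C}$ of elements for which $(S_n x)_{n\geq 1}$ converges a.u.\ is closed — by the standard Banach-principle strategy adapted to the almost-uniform topology. The overall idea is to show that $\mathcal{C}$ is closed by taking a sequence $x^{(m)} \in \mathcal{C}$ converging in $L_p(\mathcal{M})$ to some $x$, and producing, for a given $\varepsilon > 0$, a single projection $e$ with $\tau(e^\perp) < \varepsilon$ such that $(S_n x)_{n\geq 1}$ is Cauchy in the $\|e \cdot e\|_\infty$ (or $\|\cdot\, e\|_\infty$) sense. The weak type $(p,p)$ hypothesis is exactly what lets us control the maximal behaviour of the error terms $S_n(x - x^{(m)})$ uniformly in $n$ on a large projection.

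First I would fix $\varepsilon > 0$ and, passing to a subsequence of the $x^{(m)}$ if necessary, arrange that $\|x - x^{(m)}\|_p$ decays rapidly, say $\|x - x^{(m)}\|_p \leq 4^{-m}\lambda_m$ for a suitably chosen sequence of levels $\lambda_m \to 0$. Applying the weak type $(p,p)$ inequality to the positive and negative parts of $x - x^{(m)}$ (here the additivity of the $S_n$ on $L_p^+(\mathcal{M})$ and a decomposition of a self-adjoint element into its positive and negative parts is used; for general $x$ one splits further into real and imaginary parts), I obtain for each $m$ a projection $f_m \in \mathcal{M}$ with $\tau(f_m^\perp) \leq \varepsilon\, 2^{-m}$ and $f_m\, S_n(x - x^{(m)})\, f_m \leq \lambda_m f_m$ for all $n$. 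Setting $f = \bigwedge_m f_m$ and using $f^\perp \leq \sum_m f_m^\perp$ gives a single projection with $\tau(f^\perp) < \varepsilon$ on which $\|f\, S_n(x - x^{(m)})\, f\|_\infty$ is uniformly small in $n$, controlled by $\lambda_m$.

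Next, for each fixed $m$ the element $x^{(m)}$ lies in $\mathcal{C}$, so $(S_n x^{(m)})_n$ converges a.u.; hence there is a projection $g_m$ with $\tau(g_m^\perp) < \varepsilon\, 2^{-m}$ on which $(S_n x^{(m)})_n$ is $\|\cdot\, g_m\|_\infty$-Cauchy. Combining $f$ with a suitable infimum of the $g_m$ produces one projection $e$ of trace defect less than $3\varepsilon$ on which I can estimate, for indices $n, n'$ large,
\[
\|e(S_n x - S_{n'} x)e\|_\infty \leq \|e\,S_n(x - x^{(m)})\,e\|_\infty + \|e(S_n x^{(m)} - S_{n'} x^{(m)})e\|_\infty + \|e\,S_{n'}(x^{(m)} - x)\,e\|_\infty.
\]
Choosing $m$ first so the outer two terms are small via the bound $\lambda_m$, then $n, n'$ large so the middle term is small by the a.u.\ Cauchy property of $x^{(m)}$, shows that $(S_n x)_n$ is a.u.\ Cauchy, and since the a.u.\ Cauchy condition implies a.u.\ convergence (the $L_0(\mathcal{M})$-valued completeness argument, as in the commutative Egorov picture), we conclude $x \in \mathcal{C}$.

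The main obstacle, and the point requiring care, is the bookkeeping of the projections and the order of quantifiers: the projection $e$ must be chosen \emph{before} knowing how large $n, n'$ need to be, yet it must simultaneously work for all the maximal estimates and for one well-chosen $x^{(m)}$. The clean way to handle this is the telescoping/diagonal device above, where the geometric decay $2^{-m}$ in the trace defects guarantees $\tau(e^\perp)$ stays below a fixed multiple of $\varepsilon$ while the levels $\lambda_m$ simultaneously force the error terms to zero. A secondary technical subtlety is that the $S_n$ are only assumed additive on the positive cone $L_p^+(\mathcal{M})$, so the reduction to self-adjoint and then positive elements must be done explicitly, invoking the weak type inequality separately on each piece and absorbing the resulting constants; I would handle the non-self-adjoint case by the standard real/imaginary and positive/negative part decomposition, noting this only costs a universal constant factor in the trace estimates.
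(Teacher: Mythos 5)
Your argument, as written, does not prove the stated lemma: it proves the \emph{bilateral} version. The crux is that the weak type $(p,p)$ hypothesis (Definition \ref{defn:type-ineq}) only provides the two-sided compression bound $e\,S_n(y)\,e\le\lambda e$, so every estimate you can derive for the error terms $S_n(x-x^{(m)})$ is of the form $\|e\,\cdot\,e\|_\infty$. Consequently your final display establishes that $(S_nx)_n$ is Cauchy with respect to $\|e\,\cdot\,e\|_\infty$, i.e.\ \emph{bilaterally} almost uniformly Cauchy, which yields b.a.u.\ convergence of $(S_nx)_n$ --- not the a.u.\ convergence required for membership in $\mathcal C$ (Definition \ref{defn:au convergence} separates the two notions exactly at this point). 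The parenthetical ``(or $\|\cdot\,e\|_\infty$)'' in your opening paragraph, and the closing claim that the estimate ``shows that $(S_n x)_n$ is a.u.\ Cauchy'', gloss over this: bilateral control of a positive, possibly unbounded operator gives no one-sided control with the same projection. Concretely, if $v$ is a unit vector with $\|ev\|=\sqrt{\lambda/N}$ and $T=N\,vv^{*}$, then $eTe\le\lambda e$ while $\|Te\|_\infty=\sqrt{N\lambda}$ is arbitrarily large; so from $e\,S_n(x-x^{(m)})\,e\le\lambda_m e$ you cannot bound $\|S_n(x-x^{(m)})\,e\|_\infty$, and the one-sided triangle inequality you would need is unavailable.

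This bilateral-to-one-sided passage is not a technicality; it is the entire content of the results the paper cites in place of a proof (the paper proves nothing here: it quotes \cite{litvinov17au} and \cite[Theorem 3.1]{chilinlitvinov16ausymmetric}). Litvinov's note \cite{litvinov17au} is devoted precisely to upgrading b.a.u.\ to a.u.\ convergence in the noncommutative Dunford--Schwartz theorem, and the a.u.\ Banach principle of Chilin--Litvinov requires a \emph{one-sided} equicontinuity hypothesis ($\sup_n\|S_n(y)e\|_\infty\le\delta$ on a projection with small trace defect), which must then be extracted from the bilateral weak type inequality by an argument that is genuinely different from the one you give; note that this paper itself, whenever it proves a.u.\ (rather than b.a.u.) statements directly, has to invoke extra structure such as Kadison's inequality or column-space ($\ell_\infty^c$) estimates. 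What your proof does correctly establish --- the subsequence extraction with geometric decay, the decomposition into positive parts (costing a factor $4$), the intersection of projections, and the three-term estimate --- is the standard noncommutative Banach principle for \emph{bilateral} almost uniform convergence: any $L_p$-limit of elements of $\mathcal C$ has b.a.u.\ convergent averages. To prove the lemma as stated, you are missing the mechanism that produces one-sided bounds on $\sup_n S_n(y)$ from the bilateral weak type hypothesis, and that mechanism is the essential idea.
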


\begin{prop}
	\label{prop:individual}
	Assume that $\alpha =(\alpha _{s})_{s\in G}$ is an action well defined
	on $\bigcup _{1\leq p\leq \infty }L_{p}(\mathcal{M})$ which satisfies
	\emph{$(\mathbf{A} ^{p}_{1})$--$(\mathbf{A} ^{p}_{3})$} for every
	$1\leq p\leq \infty $. Let $(A_{n})_{n\geq 1}$ be as above, and let
	$1\leq p_{0}<p_{1}\leq \infty $. Assume that $(A_{n})_{n\geq 1}$ is of
	strong type $(p,p)$ for all $p_{0}<p<p_{1}$.
	\begin{enumerate}
		\item[(1)] For all $x\in L_{p}(\mathcal{M})$ with
		$p_{0}<p\leq 2p_{0}$,
		$(A_{n}x-Px)_{n\geq 1}\in L_{p}(\mathcal{M};c_{0})$, and hence
		$(A_{n} x)_{n\geq 1}$ converges b.a.u. to $Px$.
		\item[(2)] For all $x\in L_{p}(\mathcal{M})$ with
		$2p_{0}< p < p_{1}$,
		$(A_{n}x-Px)_{n\geq 1}\in L_{p}(\mathcal{M};c_{0}^{c})$, and
		hence $(A_{n} x)_{n\geq 1}$ converges a.u. to $Px$.
	\end{enumerate}
\end{prop}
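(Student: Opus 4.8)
The plan is to phrase each claimed membership as a closed condition on $x$ and then verify it only on a dense subspace. Fix $p\in(p_0,p_1)$ and set $\Phi(x)=(A_nx-Px)_{n\ge1}$. By the standing strong type $(p,p)$ assumption and the boundedness of $P$, the map $\Phi$ is bounded from $L_p(\mathcal M)$ into $L_p(\mathcal M;\ell_\infty)$; since $L_p(\mathcal M;c_0)$ is by definition a closed subspace of $L_p(\mathcal M;\ell_\infty)$, the set $\mathcal C=\{x:\Phi(x)\in L_p(\mathcal M;c_0)\}$ is a closed subspace of $L_p(\mathcal M)$, and likewise for the column analogue $\mathcal C^c$. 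It therefore suffices to check the memberships on the dense subspace $\mathcal D=\mathcal F_p+\mathrm{span}\{\,y-\alpha_g y:g\in G,\ y=y^*\in L_1(\mathcal M)\cap\mathcal M\,\}$; density holds because the self-adjoint coboundaries span a dense subspace of $\mathcal F_p^{\perp}$. On $\mathcal F_p$ one has $A_nx=x=Px$, so $\Phi(x)=0$ lies in all these spaces. Thus everything reduces to controlling $(A_nz)_n$ for a self-adjoint coboundary $z=y-\alpha_g y$, for which $Pz=0$.

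First I would record the decay $\|A_nz\|_\infty\to0$. Using right-invariance of $m$ one rewrites $A_n(\alpha_gy)$ as the average of $\alpha_ky$ over $F_ng$, so that $A_nz$ is the average of $\alpha_ky$ over the symmetric difference $F_n\bigtriangleup F_ng$; hence
\[
\|A_nz\|_\infty\le\frac{m(F_n\bigtriangleup F_ng)}{m(F_n)}\,\sup_{k\in G}\|\alpha_ky\|_\infty,
\]
which tends to $0$ by the F{\o}lner property and uniform boundedness $(\mathbf A^\infty_2)$ on $\mathcal M$. For case (1) this already yields the $c_0$-membership by interpolation: choosing $q_0$ with $p_0<q_0<p$ and applying the interpolation identity of Proposition \ref{prop:nc max funct}(2) with endpoints $q_0$ and $\infty$ (so $\theta=1-q_0/p$),
\[
\Big\|{\sup_{n>N}}^+A_nz\Big\|_{L_p(\mathcal M;\ell_\infty)}\le\Big\|{\sup_{n>N}}^+A_nz\Big\|_{L_{q_0}(\mathcal M;\ell_\infty)}^{1-\theta}\Big\|{\sup_{n>N}}^+A_nz\Big\|_{L_\infty(\mathcal M;\ell_\infty)}^{\theta},
\]
where the first factor is $\le C_{q_0}\|z\|_{q_0}$ by strong type at $q_0$ and the second is $\le(\sup_{n>N}\|A_nz\|_\infty)^{\theta}\to0$. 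As the tail norm vanishes, $(A_nz)_n\in L_p(\mathcal M;c_0)$, so $\mathcal D\subset\mathcal C$ and $\mathcal C=L_p(\mathcal M)$.

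For case (2) the stronger column membership rests on the square-function identity $\|(w_n)\|_{L_p(\mathcal M;\ell_\infty^c)}^2=\|(w_n^*w_n)\|_{L_{p/2}(\mathcal M;\ell_\infty)}$, valid for $p\ge2$. Since $z$ is self-adjoint and each $\alpha_k$ is positive, $A_nz$ is self-adjoint with $|A_nz|\le A_n|z|$, whence the domination $(A_nz)^2\le\|A_nz\|_\infty\,A_n|z|$. By monotonicity of the two-sided maximal norm this gives, for $n>N$,
\[
\Big\|{\sup_{n>N}}^+(A_nz)^2\Big\|_{L_{p/2}(\mathcal M;\ell_\infty)}\le\Big(\sup_{n>N}\|A_nz\|_\infty\Big)\Big\|{\sup_n}^+A_n|z|\Big\|_{L_{p/2}(\mathcal M;\ell_\infty)}\le\Big(\sup_{n>N}\|A_nz\|_\infty\Big)C_{p/2}\|z\|_{p/2},
\]
which tends to $0$; taking square roots shows the column tail norm vanishes, so $(A_nz)_n\in L_p(\mathcal M;c_0^c)$. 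The closedness of $\mathcal C^c$ is handled the same way: the Cauchy--Schwarz inequality $(A_nw)^*(A_nw)\le A_n(w^*w)$ for the averaging operators (applied with $w=x-Px$, so that $A_nw=A_nx-Px$), combined with strong type $(p/2,p/2)$ through the same identity, bounds $\Phi$ into $L_p(\mathcal M;\ell_\infty^c)$. In both steps the exponent $p/2$ must lie in $(p_0,p_1)$, i.e. $p>2p_0$; this is exactly the origin of the threshold separating the two cases.

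Finally, Lemma \ref{lem:czero} converts $c_0$-membership into b.a.u. convergence and $c_0^c$-membership into a.u. convergence, giving the two assertions. The main obstacle is case (2): one must transfer purely two-sided information — the strong type bounds and the $L_\infty$-decay of coboundaries — into the column scale. The device that makes this possible is the reduction of the $\ell_\infty^c$-norm at level $p$ to an $\ell_\infty$-norm at level $p/2$ via the square-function identity together with the Schwarz inequality for the averaging operators; it is precisely the halving of the exponent that both forces $p>2p_0$ and requires the self-adjoint reduction and the domination $(A_nz)^2\le\|A_nz\|_\infty A_n|z|$.
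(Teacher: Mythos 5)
Your treatment of case (1) is correct and is essentially the paper's own proof: the same reduction to (self-adjoint) coboundaries via the mean ergodic splitting, the same F{\o}lner estimate $\|A_nz\|_\infty\le \frac{m(F_n\bigtriangleup F_ng_0)}{m(F_n)}\|y\|_\infty\sup_g\|\alpha_g\|_{B(\mathcal M)}$, and the same interpolation between $L_{q_0}(\mathcal M;\ell_\infty)$ and $L_\infty(\mathcal M;\ell_\infty)$ to kill the tail, followed by closedness of $L_p(\mathcal M;c_0)$ inside $L_p(\mathcal M;\ell_\infty)$.

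Case (2), however, contains a genuine gap: the domination $|A_nz|\le A_n|z|$ is false. Positivity of $A_n$ only yields $-A_n|z|\le A_nz\le A_n|z|$, and in the operator order $-c\le w\le c$ does \emph{not} imply $|w|\le c$. Writing $z=z_+-z_-$, your claim amounts to $|a-b|\le a+b$ for the positive operators $a=A_nz_+$, $b=A_nz_-$, and this inequality fails for general positive operators: with
$a=\begin{pmatrix}1&0\\0&0\end{pmatrix}$ and $b=\frac12\begin{pmatrix}1&1\\1&1\end{pmatrix}$
one computes $|a-b|=2^{-1/2}I$, while the smallest eigenvalue of $a+b$ is $1-2^{-1/2}<2^{-1/2}$, so $|a-b|\not\le a+b$. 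Consequently the key estimate $(A_nz)^2\le\|A_nz\|_\infty\,A_n|z|$ and the tail bound built on it are unjustified. (A second, smaller issue: the Schwarz inequality $(A_nw)^*(A_nw)\le C\,A_n(w^*w)$ for \emph{arbitrary} $w$ requires $2$-positivity; under $(\mathbf A_3^\infty)$ alone you only have Kadison's inequality for self-adjoint elements, so $x-Px$ must be split into real and imaginary parts.) The paper avoids absolute values entirely: Kadison's inequality gives $(A_nz)^2\le C\,A_n(z^2)$ with $C=\sup_g\|\alpha_g\|_{B(\mathcal M)}$, i.e.\ one squares the argument of $A_n$ rather than comparing with $A_n|z|$. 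Your argument can be repaired in this spirit while keeping your square-function identity: choose $p_0<q<p/2$ (possible exactly because $p>2p_0$) and interpolate in the two-sided maximal norm at level $p/2$,
\[
\Big\|{\sup_{n>N}}^{+}(A_{n}z)^{2}\Big\|_{L_{p/2}(\mathcal M;\ell_{\infty})}\le\Big\|{\sup_{n}}^{+}(A_{n}z)^{2}\Big\|_{L_{q}(\mathcal M;\ell_{\infty})}^{2q/p}\Big(\sup_{n>N}\|A_{n}z\|_{\infty}^{2}\Big)^{1-2q/p},
\]
where the first factor is finite by Kadison's inequality together with the strong type $(q,q)$ bound applied to $z^2$, and the second factor tends to $0$ by the F{\o}lner estimate; taking square roots then gives the vanishing of the column tail norm and hence $(A_nz)_n\in L_p(\mathcal M;c_0^c)$, as you intended.
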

\begin{proof}
	According to the splitting \eqref{eq:decomposition erg} and the discussion
	after it, we know that
	\begin{equation*}
	S=\operatorname{span} \bigl\{x-\alpha _{s}x:s\in G,x\in L_{1}(
	\mathcal{M})\cap \mathcal{M} \bigr\}
	\end{equation*}
	is dense in
	$\overline{(\mathrm{Id}-P)(L_{p}(\mathcal{M}))}$ for all
	$1\leq p<\infty $. Also, observe that for all $x\in S$,
	\begin{equation}
	\lim _{n\to \infty }A_{n}x=0 \text{ a.u.},\quad x\in S.
	\label{eq:limit an on s}
	\end{equation}
	To see this, take an arbitrary $x\in S$ of the form
	$x=y-\alpha _{s_{0}}y$ for some $s_{0}\in G$ and
	$y\in L_{1}(\mathcal{M})\cap \mathcal{M}$. Then
	\begin{align*}
	A_{n}x&=
	\frac{1}{m(F_{n})}
	\int _{F_{n}}(\alpha _{s}y-\alpha _{ss_{0}}y)
	\,dm(s)
	\\
	&=
	\frac{1}{m(F_{n})}
	\int _{F_{n}\setminus ( F_{n}s_{0}\cap F_{n})} \alpha _{s}y\,dm(s) -
	\frac{1}{m(F_{n})}
	\int _{F_{n}s_{0}\setminus ( F_{n}s_{0}
		\cap F_{n})}\alpha _{s}y\,dm(s) .
	\end{align*}
	Therefore, according to $(\mathbf{A} _{2}^{\infty })$,
	\begin{equation}
	\label{eq:estimatebyfolner} \llVert A_{n}x \rrVert _{\infty }\leq
	\frac{m(F_{n}\bigtriangleup F_{n}s_{0})}{m(F_{n})} \llVert y \rrVert _{\infty }
	\sup _{s
		\in G} \llVert \alpha _{s} \rrVert
	_{B(\mathcal{M})},
	\end{equation}
	which converges to $0$ as $n\to \infty $ according to the F{\o}lner condition.
	This therefore yields the a.u. convergence of $(A_{n} x)_{n}$ in
	\eqref{eq:limit an on s}, as desired.
	
	Now we prove assertion (1). Take $x\in L_{p}(\mathcal{M})$. Since
	$S$ is dense in\break
	$\overline{(\mathrm{Id}-P)(L_{p}(\mathcal{M}))}$, there are
	$x_{k}\in S$ such that
	\begin{equation*}
	\lim _{k\to \infty } \llVert x-Px-x_{k} \rrVert
	_{p}=0.
	\end{equation*}
	Since $(A_{n})_{n\geq 1}$ is of strong type $(p,p)$, there exists a constant
	$C>0$ independent of $x$ such that
	\begin{equation*}
	\bigl\llVert (A_{n}x-Px-A_{n}x_{k} )_{n}
	\bigr\rrVert _{L_{p}(
		\mathcal{M};\ell _{\infty })}\le C \llVert x-Px-x_{k} \rrVert
	_{p}.
	\end{equation*}
	Thus,
	\begin{equation*}
	\lim _{k\to \infty } (A_{n}x_{k} )_{n}=
	(A_{n}x-Px )_{n} \quad \text{in } L_{p}(
	\mathcal{M};\ell _{\infty }).
	\end{equation*}
	Since $L_{p}(\mathcal{M};c_{0})$ is closed in
	$L_{p}(\mathcal{M};\ell _{\infty })$, it suffices to show that
	$  (A_{n}x_{k}  )_{n}\in L_{p}(\mathcal{M};c_{0})$ for all~$k$. To this end, we take an arbitrary $z\in S$ of the form
	$z=y-\alpha _{s_{0}}y$ for some $s_{0}\in G$ and
	$y\in L_{1}(\mathcal{M})\cap \mathcal{M}$. Take some
	$p_{0}<q<p$. Note that $z\in L_{q}(\mathcal{M})$ and that
	$(A_{n})_{n}$ is of strong type $(q,q)$ by assumption, so
	$  (A_{n}(z)  )_{n}$ belongs to
	$L_{q}(\mathcal{M};\ell _{\infty })$. Then by
	\eqref{eq:interp-vectorLp} and \eqref{eq:estimatebyfolner}, for any
	$m<n$,
	\begin{align*}
	\llVert \mathop{{\sup }^{+}}_{m\le j\le n}A_{j}z
	\rrVert _{p} &\le
	\sup _{m
		\le j\le n} \llVert A_{j}z \rrVert
	_{\infty }^{1-\frac{q}{p}} \llVert \mathop{{ \sup
		}^{+}}_{m\le j\le n}A_{j}z \rrVert
	_{q}^{\frac{q}{p}}
	\\
	&\le
	\sup _{m\le j\le n} \Bigl(
	\frac{m(F_{j}\bigtriangleup F_{j}s_{0})}{m(F_{j})} \llVert y \rrVert _{\infty } \Bigr)^{1-\frac{q}{p}}
	\llVert \mathop{{\sup }^{+}}_{m\le j\le n}A_{j}z
	\rrVert _{q}^{\frac{q}{p}}.
	\end{align*}
	Thus, $   \llVert  \mathop{{\sup }^{+}}_{j\geq m}A_{j}z   \rrVert  _{p}$ tends to
	$0$ as $m\to \infty $. Therefore, the finite sequence
	$(A_{1}z,\ldots,A_{m}z,0,\ldots )$ converges to $(A_{n}z)_{n}$ in
	$L_{p}(\mathcal{M};\ell _{\infty })$ as $m\to \infty $. As a result
	$  (A_{n}(z)  )_{n}\in L_{p}(\mathcal{M};c_{0})$, as desired.
	
	Assertion (2) is similar. It suffices to note that by the classical Kadison
	inequality in \cite{kadison52kadison},
	\begin{equation*}
	(A_{n} x)^{2}\leq A_{n}
	(x^{2})
	\sup _{s\in G} \llVert \alpha _{s} \rrVert
	_{B(
		\mathcal{M})},\quad x\in L_{p}(\mathcal{M})\cap \mathcal{M},x \geq
	0,
	\end{equation*}
	and hence by the strong type $(p,p)$ inequality and the definition of
	$L_{p}(\mathcal{M};\ell _{\infty }^{c})$, there exists a constant
	$C$ such that
	\begin{equation*}
	\bigl\llVert (A_{n} x)_{n\geq 1} \bigr\rrVert _{L_{p}(\mathcal{M};\ell _{\infty }^{c})}
	\leq \bigl\llVert \bigl((A_{n} x)^{2}
	\bigr)_{n\geq 1} \bigr\rrVert ^{1/2}_{L_{p/2}(
		\mathcal{M};\ell _{\infty })} \leq C
	\llVert x \rrVert _{p},\quad x\in L_{p}^{+}(
	\mathcal{M}).
	\end{equation*}
	Then a similar argument yields that
	$(A_{n}x-Px)_{n\geq 1}\in L_{p}(\mathcal{M};c_{0}^{c})$.
\end{proof}
\begin{rem}
	The above argument certainly works as well for a F{\o}lner sequence
	$(F_{r})_{r>0}$ indexed by $r\in \mathbb{R}_{+}$, provided that
	$r\mapsto A_{r}x$ is continuous for $x\in L_{p}(\mathcal{M})$.
\end{rem}

As a corollary, we obtain the individual ergodic theorems for actions on
$L_{1}(\mathcal{M})+\mathcal{M}$. We complete the proof of Theorem~\ref{thm:main intro}.

\begin{thm}
	Let $d$ be an invariant metric on~$G$. Assume that $(G,d)$ satisfies
	\eqref{eq:doubling def} and \eqref{eq:asymp inv}. Let $\alpha $ be an action
	of $G$ well defined on
	$\bigcup _{1\leq p\leq \infty }L_{p}(\mathcal{M})$ which satisfies
	\emph{$(\mathbf{A} ^{p}_{1})$--$(\mathbf{A} ^{p}_{3})$} for every
	$1\leq p\leq \infty $. Denote
	\begin{equation*}
	A_{r}x=
	\frac{1}{m(B_{r})}
	\int _{B_{r}}\alpha _{s}x\,dm(s),\quad x\in
	L_{p}( \mathcal{M}), r>0.
	\end{equation*}
	Then $(A_{r}x)_{r>0}$ converges a.u. to $Px$ as $r\to \infty $ for all
	$1<p<\infty $.
	
	Moreover, if $\alpha $ is a continuous action of $G$ on
	$\mathcal{M}$ by $\tau $-preserving automorphisms (i.e., $\alpha $ satisfies
	\emph{$(\mathbf{A} ')$}), then $(A_{r}x)_{r>0}$ converges a.u. to $Px$ as
	$r\to \infty $ for all $x\in L_{1}(\mathcal{M})$.
\end{thm}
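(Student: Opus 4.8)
The plan is to derive the a.u.\ convergence from the maximal inequalities by combining a density argument with the noncommutative Banach principle of Lemma \ref{lem:principle litvinov}. The starting point is Theorem \ref{thm:max doubling}: for the invariant metric $d$ and the balls $B_r$ it asserts that $(A_r)_{r>0}$ is of strong type $(p,p)$ for every $1<p<\infty$, and, under the stronger hypothesis $(\mathbf{A}')$, of weak type $(1,1)$. Since strong type $(p,p)$ implies weak type $(p,p)$ — given $x\ge 0$ one picks, by the positive description of $L_p(\mathcal{M};\ell_\infty)$ recalled in Section \ref{sect:max ineq}, an element $a\in L_p^+(\mathcal{M})$ with $A_r x\le a$ and $\|a\|_p\le C\|x\|_p$, and takes $e=\chi_{[0,\lambda]}(a)$, so that Chebyshev's inequality bounds $\tau(e^{\perp})$ while $eA_rx\,e\le e a e\le\lambda e$ — the family $(A_r)$ is of weak type $(p,p)$ throughout the range to be treated.

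Next I would exhibit a dense subspace on which a.u.\ convergence to $Px$ is transparent. By the ergodic splitting \eqref{eq:decomposition erg}, the subspace $\mathcal{F}_p+S$, where $S=\mathrm{span}\{y-\alpha_g y:g\in G,\ y\in L_1(\mathcal{M})\cap\mathcal{M}\}$, is dense in $L_p(\mathcal{M})$ for every $1\le p<\infty$. For $x\in\mathcal{F}_p$ one has $A_rx=x=Px$ for all $r$, and for a coboundary $x=y-\alpha_{g_0}y$ the computation behind \eqref{eq:estimatebyfolner}, applied to the F\o lner family $(B_r)$, gives
\[
\|A_rx\|_\infty\le\frac{m(B_r\bigtriangleup B_rg_0)}{m(B_r)}\,\|y\|_\infty\,\sup_{g\in G}\|\alpha_g\|_{B(\mathcal{M})},
\]
which tends to $0$ as $r\to\infty$ by the asymptotic invariance \eqref{eq:asymp inv}. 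Thus on coboundaries $A_rx\to 0=Px$ uniformly, in particular a.u., and the set $\mathcal{C}$ of all $x$ for which $(A_rx)_{r>0}$ converges a.u.\ contains the dense subspace $\mathcal{F}_p+S$.

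To finish, I would invoke Lemma \ref{lem:principle litvinov}: being of weak type $(p,p)$, the family $(A_r)$ has a closed a.u.-convergence set, whence $\mathcal{C}=L_p(\mathcal{M})$. The a.u.\ limit then equals $Px$: for $1<p<\infty$ this is immediate from the mean ergodic theorem, which gives $A_rx\to Px$ in $L_p(\mathcal{M})$ and hence in measure; for $x\in L_1(\mathcal{M})$ under $(\mathbf{A}')$ one instead notes that the a.u.\ limit is dominated by the maximal operator and is therefore continuous from $L_1(\mathcal{M})$ into $L_0(\mathcal{M})$ for the measure topology, so that, agreeing with the continuous map $x\mapsto Px$ on the dense subspace $\mathcal{F}_1+S$, it must coincide with it everywhere.

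The step I expect to be the main obstacle is matching the continuous parameter $r\in(0,\infty)$ with the sequential formulation of Lemma \ref{lem:principle litvinov}. I would resolve it using the continuity of $r\mapsto A_rx$ furnished by $(\mathbf{A}^p_1)$ — as in the remark after Proposition \ref{prop:individual} — which lets one replace the supremum over all $r>0$ by a supremum over a countable cofinal set of radii; the sequential principle then applies verbatim, and the resulting a.u.\ convergence upgrades to convergence as $r\to\infty$. A minor point, dealt with by linearity of $A_r$ and decomposition into positive and negative parts, is that Lemma \ref{lem:principle litvinov} is stated for maps on $L_p^+(\mathcal{M})$ while $\mathcal{C}$ is a subspace of all of $L_p(\mathcal{M})$.
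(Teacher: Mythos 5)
Your proof is correct, and it takes a genuinely more direct route than the paper's. The paper argues in two stages: it first invokes Proposition \ref{prop:individual}(2) (with $p_0=1$, $p_1=\infty$, fed by the strong type inequalities of Theorem \ref{thm:max doubling}) to show that $(A_rx-Px)_r$ lies in $L_{p'}(\mathcal{M};c_0^c)$ for $2<p'<\infty$ --- this is where Kadison's inequality, the interpolation identities of Proposition \ref{prop:nc max funct} and Lemma \ref{lem:czero} are consumed --- giving a.u.\ convergence on each $L_{p'}(\mathcal{M})$; only then, for $1\le p\le2$, does it combine the density of $L_p(\mathcal{M})\cap L_{p'}(\mathcal{M})$ in $L_p(\mathcal{M})$ with Lemma \ref{lem:principle litvinov} and the weak type inequalities. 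You collapse this to a single stage: on the dense subspace $\mathcal{F}_p+S$ the convergence already holds in operator norm by \eqref{eq:estimatebyfolner} (legitimate here because the hypotheses include $(\mathbf{A}^\infty_1)$--$(\mathbf{A}^\infty_3)$), and norm convergence is trivially a.u., so one application of Lemma \ref{lem:principle litvinov} per exponent finishes; the $\ell_\infty^c/c_0^c$ machinery never appears. This is the key simplification: the paper needs the detour through $L_{p'}$ with $p'>2$ precisely because Proposition \ref{prop:individual}(1) only yields b.a.u.\ convergence for $p\le 2$, whereas uniform convergence on coboundaries makes that distinction irrelevant. What the paper's longer route buys is the stronger quantitative conclusion of $c_0^c$-membership and, for $p>2$, convergence without any appeal to the Banach principle --- a scheme it must reuse in Theorem \ref{cor:individual balls}, where the action lives on one fixed $L_p$ and your $\|\cdot\|_\infty$ coboundary estimate is unavailable. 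Your auxiliary steps (strong type implies weak type via spectral projections of the positive majorant; identification of the limit with $Px$ via convergence in measure, resp.\ via measure-topology continuity on $L_1$ under $(\mathbf{A}')$) are sound, and your handling of the continuous parameter is no less rigorous than the paper's own proof, which likewise quotes the sequential Lemma \ref{lem:principle litvinov} for the family $(A_r)_{r>0}$. One caveat there: continuity of $r\mapsto A_rx$ does not follow from $(\mathbf{A}^p_1)$ alone, since $r\mapsto m(B_r)$ may jump at radii where a sphere carries positive mass; the reduction to a countable cofinal set of radii is safer via the order sandwich $\frac{m(B_{r_k})}{m(B_{r_{k+1}})}A_{r_k}x\le A_rx\le\frac{m(B_{r_{k+1}})}{m(B_{r_k})}A_{r_{k+1}}x$ for $x\ge0$ and $r_k\le r\le r_{k+1}$, in the spirit of the proof of Theorem \ref{cor:individual balls}(2).
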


\begin{proof}
	Note that for $1\leq p\leq 2$ and $2<p'<\infty $,
	$L_{p}(\mathcal{M})\cap L_{p'}(\mathcal{M})$ is dense in
	$L_{p}(\mathcal{M})$, and $(A_{r}x)_{r>0}$ converges a.u. to $Px$ for all
	$x\in L_{p'}(\mathcal{M})$ according to Proposition~\ref{prop:individual}. Then the theorem is an immediate consequence of
	Lemma~\ref{lem:principle litvinov} and Theorem~\ref{thm:max doubling}.
\end{proof}

For word metrics on groups of polynomial growth, it is well known that
the associated balls satisfy the F{\o}lner condition (see
\cite{breuillard14polygrowth,tessera07polygrowth}). Together with Corollary~\ref{cor:weakonepoly} we obtain the following result. This also proves
the a.u. convergence on $L_{1}(\mathcal{M})$ stated in Theorem~\ref{thm:main poly}.

\begin{thm}
	Assume that $G$ is of polynomial growth, and is generated by a symmetric
	compact subset~$V$. Let $\alpha $ be an action of $G$ well defined on
	$\bigcup _{1\leq p\leq \infty }L_{p}(\mathcal{M})$ which satisfies
	\emph{$(\mathbf{A} ^{p}_{1})$--$(\mathbf{A} ^{p}_{3})$} for every
	$1\leq p\leq \infty $. Denote
	\begin{equation*}
	A_{n}x=
	\frac{1}{m(V^{n})}
	\int _{V^{n}}\alpha _{s}x\,dm(s),\quad x\in
	L_{1}( \mathcal{M}),n\in \mathbb{N}.
	\end{equation*}
	Then $(A_{n} x)$ converges a.u. to $Px$ for all $1<p<\infty $.
	
	Moreover, if $\alpha $ is a continuous $\tau $-preserving action of
	$G$ on $\mathcal{M}$ such that $\alpha _{s}$ is a positive isometry on
	$\mathcal{M}$ for each $s\in G$, then $(A_{n} x)$ converges a.u. to
	$Px$ for all $x\in L_{1}(\mathcal{M})$.
\end{thm}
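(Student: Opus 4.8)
The plan is to deduce the first assertion directly from the preceding theorem and to handle the $L_1$-endpoint of the moreover part by a soft Banach-principle argument. For the first assertion I take $d$ to be the word metric on $G$ associated with $V$, introduced in Example \ref{ex:max doubling}(1). This metric is invariant and, by that same example, $(G,d)$ satisfies the doubling condition \eqref{eq:doubling def} and the asymptotic invariance \eqref{eq:asymp inv}. Since $d$ is integer-valued, one has $B_r=V^n$ for $n\le r<n+1$, so the continuous family $(A_r)_{r>0}$ of the preceding theorem is constant on each interval $[n,n+1)$ and coincides there with $A_n$. Consequently a.u. convergence of $(A_r x)_{r>0}$ to $Px$ is literally equivalent to a.u. convergence of $(A_n x)_{n}$ to $Px$, and the first assertion follows at once from the preceding theorem for every $1<p<\infty$.

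For the moreover part I first check that the standing hypotheses are in force. A continuous, $\tau$-preserving action by positive isometries of $\mathcal M$ extends, by interpolation between the $L_\infty$-isometry and the induced positive $L_1$-contraction (the $L_1$-extension being provided by Corollary \ref{cor:weakonepoly}), to an action on every $L_p(\mathcal M)$, $1\le p\le\infty$, satisfying $(\mathbf A^p_1)$--$(\mathbf A^p_3)$. In particular the first assertion applies, so $(A_n x)_n$ converges a.u. to $Px$ for every $x\in L_p(\mathcal M)$ with $1<p<\infty$; taking $p=2$, this convergence holds on the dense subspace $L_1(\mathcal M)\cap L_2(\mathcal M)$ of $L_1(\mathcal M)$. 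On the other hand, Corollary \ref{cor:weakonepoly} guarantees that $(A_n)_{n\ge1}$ is of weak type $(1,1)$.

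With these two ingredients the $L_1$-convergence follows from the noncommutative Banach principle. The maps $A_n\colon L_1^+(\mathcal M)\to L_0^+(\mathcal M)$ are additive and of weak type $(1,1)$, so by Lemma \ref{lem:principle litvinov} the set $\mathcal C=\{x\in L_1(\mathcal M):(A_n x)_n\text{ converges a.u.}\}$ is closed in $L_1(\mathcal M)$. Since $\mathcal C$ contains the dense subspace $L_1(\mathcal M)\cap L_2(\mathcal M)$, it equals all of $L_1(\mathcal M)$, whence $(A_n x)_n$ converges a.u. for every $x\in L_1(\mathcal M)$. To identify the limit as $Px$ I would use that each $A_n$ is an $L_1$-contraction and that $P$ is bounded on $L_1(\mathcal M)$: approximating $x$ by $x_k\in L_1(\mathcal M)\cap L_2(\mathcal M)$ and combining the convergence $A_n x_k\to Px_k$ in measure (already known on the dense subspace) with the uniform-in-$n$ bound $\|A_n x-A_n x_k\|_1\le\|x-x_k\|_1$ and with $\|Px-Px_k\|_1\to0$, one obtains $A_n x\to Px$ in measure; since a.u. convergence also implies convergence in measure and measure limits are unique, the a.u. limit must coincide with $Px$.

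The main obstacle is precisely the $L_1$-endpoint: there is no strong type $(1,1)$ inequality, so one cannot argue as in Proposition \ref{prop:individual} through the spaces $L_p(\mathcal M;c_0)$. Two points make the soft argument work: first, the weak type $(1,1)$ bound coming from the random-walk domination of Corollary \ref{cor:weakonepoly}, which crucially does not require $\alpha$ to act by automorphisms but only by positive isometries; and second, the correct identification of the a.u. limit, where the $L_1$-contractivity of $A_n$ is exactly what upgrades the pointwise control on the dense subspace to convergence in measure on all of $L_1(\mathcal M)$.
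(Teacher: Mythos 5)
Your proof is correct and follows essentially the same route as the paper: the case $1<p<\infty$ is obtained by specializing the preceding theorem to the word metric of $V$ (whose balls are exactly the sets $V^{n}$, by Example \ref{ex:max doubling}(1)), and the $L_{1}$ case combines the weak type $(1,1)$ inequality of Corollary \ref{cor:weakonepoly} with a.u.\ convergence on the dense subspace $L_{1}(\mathcal{M})\cap L_{2}(\mathcal{M})$ and the closedness lemma (Lemma \ref{lem:principle litvinov}). The only difference is that you spell out the extension of $\alpha$ to the intermediate $L_{p}$-spaces and the identification of the a.u.\ limit as $Px$ via convergence in measure, steps which the paper leaves implicit.
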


Also, it is obvious that an increasing sequence of compact subgroups always
satisfies the F{\o}lner condition. Together with Theorem~\ref{thm:max subgp} we obtain the following.
\begin{thm}
	Let $G$ be an increasing union of compact subgroups
	$(G_{n})_{n\geq 1}$. Let $\alpha $ be an action of $G$ well defined on
	$\bigcup _{1\leq p\leq \infty }L_{p}(\mathcal{M})$ which satisfies
	\emph{$(\mathbf{A} ^{p}_{1})$--$(\mathbf{A} ^{p}_{3})$} for every
	$1\leq p\leq \infty $. Denote
	\begin{equation*}
	A_{n}x=
	\frac{1}{m(G_{n})}
	\int _{G_{n}}\alpha _{s}x\,dm(s),\quad x\in
	L_{p}( \mathcal{M}),n\in \mathbb{N}.
	\end{equation*}
	Then $(A_{n} x)$ converges a.u. to $Px$ for all $1<p<\infty $.
	
	Moreover, if $\alpha $ is a continuous action of $G$ on
	$\mathcal{M}$ by $\tau $-preserving automorphisms (i.e., $\alpha $ satisfies
	\emph{$(\mathbf{A} ')$}), then $(A_{n} x)$ converges a.u. to $Px$ for all
	$x\in L_{1}(\mathcal{M})$.
\end{thm}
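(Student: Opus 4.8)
The plan is to reproduce the argument already used for the polynomial-growth case, since every ingredient is in place. First I would record that an increasing union of compact subgroups $(G_n)_{n\geq 1}$ is automatically a F\o lner sequence: any compact $K\subset G$ meets only finitely many cosets, each of which eventually lies in some $G_n$, so $K\subset G_N$ for large $N$ and hence $G_nK=G_n$ whenever $n\geq N$; thus $m(G_nK)/m(G_n)=1$ and \eqref{eq:folner condition cpt} holds. This justifies the mean ergodic splitting \eqref{eq:decomposition erg} and the attendant projection $P$, so that the statement $A_n x\to Px$ makes sense.

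Next I would feed the maximal inequalities of Theorem \ref{thm:max subgp} into the abstract pointwise convergence result. Theorem \ref{thm:max subgp} gives strong type $(p,p)$ for all $1<p<\infty$, and weak type $(1,1)$ under $(\mathbf A ')$. Applying Proposition \ref{prop:individual} with $p_0=1$ and $p_1=\infty$, its part (2) yields a.u. convergence $A_nx\to Px$ for every $x\in L_{p'}(\mathcal{M})$ with $p'>2$. This already settles the range $2<p<\infty$, but for $1<p\leq 2$ Proposition \ref{prop:individual} delivers only b.a.u. convergence, which must be upgraded.

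The upgrade proceeds through the noncommutative Banach principle. Fixing any $p'>2$, the subspace $L_p(\mathcal{M})\cap L_{p'}(\mathcal{M})$ is dense in $L_p(\mathcal{M})$ for every $1\leq p<\infty$, and on it a.u. convergence to $Px$ already holds (the a.u. limit being forced to equal the $L_p$-norm limit $Px$, since a.u. convergence and norm convergence both imply convergence in measure, where limits are unique). Since strong type $(p,p)$ entails weak type $(p,p)$, Lemma \ref{lem:principle litvinov} shows that the set of $x$ for which $(A_nx)$ converges a.u. is closed in $L_p(\mathcal{M})$; containing a dense subset, it must be all of $L_p(\mathcal{M})$. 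This gives a.u. convergence throughout $1<p<\infty$. For the final $L_1$ assertion the same reasoning applies verbatim with $p=1$, now using the weak type $(1,1)$ bound from Theorem \ref{thm:max subgp} (valid under $(\mathbf A ')$) together with the density of $L_1(\mathcal{M})\cap L_{p'}(\mathcal{M})$ in $L_1(\mathcal{M})$.

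Given this machinery there is no serious obstacle; the only points deserving care are checking the hypotheses of Proposition \ref{prop:individual}, namely that $\alpha$ is well-defined on $\cup_{1\leq p\leq\infty}L_p(\mathcal{M})$ with $(\mathbf A ^p_1)$-$(\mathbf A ^p_3)$ for every $p$ (exactly the standing assumption), and verifying that the a.u. convergence obtained on $L_{p'}$ for $p'>2$ is genuinely recorded in the $L_0(\mathcal{M})$ sense, so that it transfers unchanged to the ambient $L_p$ or $L_1$ space in the density step.
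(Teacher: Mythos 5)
Your proposal is correct and follows essentially the same route as the paper's (implicit) proof: the paper obtains this theorem by combining the maximal inequalities of Theorem \ref{thm:max subgp} with Proposition \ref{prop:individual} (giving a.u.\ convergence on the dense subspace $L_p(\mathcal{M})\cap L_{p'}(\mathcal{M})$ with $p'>2$) and the noncommutative Banach principle of Lemma \ref{lem:principle litvinov}, exactly as in the preceding doubling-metric theorem. The only cosmetic difference is your verification of the F{\o}lner property through compact sets; the paper treats this as obvious, and indeed only the pointwise condition $G_n g=G_n$ for $n$ large is needed, which is immediate since $G=\cup_n G_n$.
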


Note that all the above arguments rely on the assumption that the action
$\alpha $ extends to a uniformly bounded action on
$L_{\infty }(\mathcal{M})$ with conditions $(\mathbf{A} _{1}^{\infty })$--$(
\mathbf{A} _{3}^{\infty })$, though our strong type $(p,p)$ inequalities in
previous sections do not require this assumption. Also, in general this
assumption does not hold for bounded representations on one fixed
$L_{p}$-space. In Theorem~\ref{cor:individual balls} we will give a stronger
result for F{\o}lner sequences associated with doubling conditions. This
also completes the proof of Theorem~\ref{thm:main poly}.

\begin{lem}%
	\label{lem:dyadic}
	Let $(X,d,\mu )$ be a metric measure space satisfying the doubling condition
	\eqref{eq:measure doubling}. Take $i\in \mathbb{N}$ and $k\leq 2^{i}$. Then
	there exists $2^{i}\leq r_{i} <2^{i+1}$ such that
	\begin{equation*}
	\mu \bigl(B(x,r_{i}+k)\setminus B(x,r_{i}) \bigr)\leq Ck
	\mu \bigl(B(x,r_{i}) \bigr) / r_{i},
	\end{equation*}
	where $C$ depends only on the doubling constant.
\end{lem}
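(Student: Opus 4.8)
The plan is to find a good radius $r_i$ inside the dyadic annulus $[2^i,2^{i+1})$ at which the ball $B(x,r_i)$ grows slowly, via a one-dimensional averaging (mean value) argument in $r$. First I would fix $x$ and record that, by the doubling condition \eqref{eq:measure doubling} together with the hypothesis $k\le 2^i$ (which gives $2^{i+1}+k\le 2^{i+2}$), two applications of doubling yield
$$\mu(B(x,2^{i+1}+k))\le \mu(B(x,2^{i+2}))\le K^2\mu(B(x,2^i)),$$
where $K$ is the doubling constant. This is the only place where the assumption $k\le 2^i$ is used: it caps the mass of the outer ball in terms of the inner one.

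Next I would estimate the total mass of the annuli as $r$ sweeps across $[2^i,2^{i+1})$. Writing $\mu(B(x,r+k)\setminus B(x,r))=\int_X\chi_{\{r<d(x,y)\le r+k\}}\,d\mu(y)$ and integrating in $r$, Fubini gives
$$\int_{2^i}^{2^{i+1}}\mu(B(x,r+k)\setminus B(x,r))\,dr=\int_X\Big(\int_{2^i}^{2^{i+1}}\chi_{\{r<d(x,y)\le r+k\}}\,dr\Big)d\mu(y).$$
For each fixed $y$ the inner integral is the length of $\{r\in[2^i,2^{i+1}):d(x,y)-k\le r<d(x,y)\}$, which is at most $k$ and vanishes unless $2^i<d(x,y)<2^{i+1}+k$. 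Hence the right-hand side is bounded by $k\,\mu(B(x,2^{i+1}+k))\le kK^2\mu(B(x,2^i))$ by the first step. (The integrand $r\mapsto\mu(B(x,r+k)\setminus B(x,r))$ is a difference of monotone functions, hence Borel measurable, so the integral is legitimate.)

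Dividing by the length $2^i$ of the interval $[2^i,2^{i+1})$, the mean value of $r\mapsto\mu(B(x,r+k)\setminus B(x,r))$ over this interval is at most $kK^2\mu(B(x,2^i))/2^i$; consequently there exists $r_i\in[2^i,2^{i+1})$ at which the annulus measure does not exceed this average. Finally, since $2^i\le r_i<2^{i+1}$, monotonicity of $r\mapsto\mu(B(x,r))$ gives $\mu(B(x,2^i))\le\mu(B(x,r_i))$ and the elementary bound $1/2^i\le 2/r_i$, so that
$$\mu(B(x,r_i+k)\setminus B(x,r_i))\le\frac{kK^2\mu(B(x,2^i))}{2^i}\le\frac{2K^2k\,\mu(B(x,r_i))}{r_i},$$
which is the claim with $C=2K^2$.

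Since this is essentially a one-variable averaging, no serious obstacle arises; the only delicate points are the Fubini bookkeeping (verifying that each $y$ contributes a set of admissible radii of length at most $k$, concentrated in $B(x,2^{i+1}+k)$) and the scale conversion replacing $2^i$ and $\mu(B(x,2^i))$ by the chosen $r_i$ and $\mu(B(x,r_i))$. I would also remark that the radius $r_i$ produced this way depends on $x$ in general; however, when $\mu(B(x,r))$ is independent of $x$ — as for an invariant metric and Haar measure on $G$, which is the setting of the intended application — the function being averaged is the same for every $x$, and hence a single $r_i$ works simultaneously for all $x$.
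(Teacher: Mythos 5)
Your proof is correct and follows essentially the same route as the paper: both arguments show the average annulus mass over the dyadic range $[2^i,2^{i+1})$ is small by using doubling to cap the total, the paper doing this discretely (pigeonhole over the disjoint annuli $S(x,2^i+nk)$, $0\le n\le[2^i/k]$, which pack into a doubled ball) and you doing it continuously via Fubini, which is the same counting fact in integral form. Your closing scale conversion ($\mu(B(x,2^i))\le\mu(B(x,r_i))$ and $1/2^i\le 2/r_i$) and the remark that $r_i$ depends on $x$ unless ball measures are translation-invariant — as they are in the paper's intended application to $(G,d,m)$ — are also consistent with how the lemma is actually used.
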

\begin{proof}
	The result and the argument are adapted from
	\cite[Proposition~17]{tessera07polygrowth}. For each
	$r\in \mathbb{N}$, we denote
	\begin{equation*}
	S(x,r)=B(x,r+k)\setminus B(x,r).
	\end{equation*}
	Then
	\begin{equation*}
	\bigcup _{n=0}^{[\frac{2^{i}}{k}]} S(x,2^{i}
	+nk) \subset B(x,2^{i+1}).
	\end{equation*}
	Therefore,
	\begin{equation*}
	\frac{2^{i}}{k}
	\inf _{0\leq n\leq [\frac{2^{i}}{k}]} \mu \bigl(S(x,2^{i}+nk) \bigr)
	\leq \mu \bigl(B(x,2^{i+1}) \bigr).
	\end{equation*}
	Thus, the lemma follows thanks to the doubling condition
	\eqref{eq:measure doubling}.
\end{proof}

\begin{lem}[{\cite[Theorem~1.1]{breuillard14polygrowth}, \cite[Corollary~10]{tessera07polygrowth}}]%
	\label{lem:tessera poly growth}
	Let $G$ be a locally
	compact group of polynomial growth, generated by a symmetric compact subset~$V$. Then
	\begin{equation*}
	\lim _{n\to \infty }
	\frac{m(V^{n})}{n^{d(G)}}=c,
	\end{equation*}
	where $d(G)$ is the rank of $G$ and $c$ is a constant depending on~$V$. And there exist $\delta >0$ and a constant $C$ such that
	\begin{equation*}
	\frac{m(V^{n+1}\setminus V^{n})}{m(V^{n})}\leq C n^{-\delta },\quad n \geq 1.
	\end{equation*}
\end{lem}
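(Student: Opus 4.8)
The plan is to reduce to the (virtually) nilpotent case, extract the precise volume asymptotics from the Carnot-group limit, and then deduce the annulus estimate from a power-saving error term by an elementary telescoping computation. First I would use Gromov's theorem (Lemma \ref{lem:gromov thm}) together with its locally compact analogue due to Losert and Breuillard: a compactly generated locally compact group of polynomial growth is, after passing to a suitable Lie quotient, virtually nilpotent, and the word metric attached to $V$ is quasi-isometric to a left-invariant metric on the associated simply connected nilpotent Lie group. For the first assertion I would then invoke Pansu's asymptotic volume theorem in the form established by Breuillard for locally compact groups: the rescaled sets $\frac1n V^n$ converge, with their normalised Haar measures, to the unit ball of the associated graded Carnot group equipped with its Carnot--Carath\'eodory metric, whence $m(V^n)/n^{d(G)}$ converges to the Haar volume $c$ of that limiting ball. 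Here $d(G)=\sum_{j=1}^{K} j r_j$ is the homogeneous dimension already appearing in Lemma \ref{lem:strcit growth nil}, and $c$ depends only on the limiting metric, hence only on $d$.

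For the second assertion the plan is to upgrade this convergence to a quantitative rate and then telescope. Concretely, suppose one establishes
\[
m(V^n)=c\,n^{d(G)}+O\bigl(n^{d(G)-\delta_0}\bigr)
\]
for some $\delta_0>0$. Granting this, one computes
\[
\frac{m(V^{n+1}\setminus V^n)}{m(V^n)}
=\frac{m(V^{n+1})}{m(V^n)}-1
=\Bigl(1+\tfrac1n\Bigr)^{d(G)}\,\frac{1+O(n^{-\delta_0})}{1+O(n^{-\delta_0})}-1
=O\bigl(n^{-\min(1,\delta_0)}\bigr),
\]
which is exactly the claimed bound with $\delta=\min(1,\delta_0)$. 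Note that the mere strict growth $c_1 n^{d(G)}\le m(V^n)\le c_2 n^{d(G)}$ coming directly from the normal-form count is \emph{not} enough here, since it only controls the annulus by $O(n^{d(G)})$; the precise leading constant of part one, together with a genuine power saving, is indispensable.

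The power-saving error term is therefore the technical heart of the argument, and producing it is the step I expect to be the main obstacle. I would obtain it by refining the counting in Lemma \ref{lem:sn nilpotent}: writing each $g\in V^n$ in canonical coordinates $(t_1^{(1)})^{n_{11}}\cdots(t_{l_K}^{(K)})^{n_{Kl_K}}$ with $n_{jk}\le cn^j$ for $k\le r_j$ and $n_{jk}\le N_j$ otherwise, the count of $|V^n|$ becomes the count of integer points in a dilated polynomial box, whose discrepancy from its leading homogeneous volume is governed by a boundary layer of relative width $O(1/n)$, contributing an error of order $n^{d(G)-1}$ \emph{once the over-counting coming from the group relations is shown to be of lower order}. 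For abelian groups such as $\mathbb{Z}^d$ the box is an honest polytope and the error is $O(n^{d-1})$, so $\delta_0=1$; in the general nilpotent case the non-injectivity of the normal-form map and the curvature of $\partial V^n$ in the Carnot coordinates must be controlled. It is precisely at this point that I would lean on the quantitative isoperimetric estimates of Tessera---the source of Lemma \ref{lem:dyadic}---to bound the boundary layer uniformly in $n$, which furnishes the desired exponent $\delta>0$ and completes the proof.
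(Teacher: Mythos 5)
The first thing to say is that the paper does not prove this lemma at all: it is quoted wholesale from Breuillard \cite{breuillard14polygrowth} and Tessera \cite{tessera07polygrowth} and used as a black box (in Theorem \ref{cor:individual balls}(2)), so your proposal has to be measured against the cited literature rather than against an argument in the paper. For the first assertion your outline is the standard route: reduce via the structure theory of locally compact groups of polynomial growth (Gromov, Lemma \ref{lem:gromov thm}, plus its locally compact refinement) and invoke Pansu's asymptotic volume theorem in Breuillard's form, with $d(G)=\sum_j jr_j$ the homogeneous dimension already appearing in Lemma \ref{lem:strcit growth nil}. Your telescoping computation is also correct arithmetic: an expansion $m(V^n)=c\,n^{d(G)}+O(n^{d(G)-\delta_0})$ does yield the annulus bound with $\delta=\min(1,\delta_0)$, and you rightly note that the two-sided bounds $c_1n^{d(G)}\le m(V^n)\le c_2n^{d(G)}$ are useless for this purpose.

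The genuine gap is exactly where you place the ``technical heart'', and neither of your two mechanisms closes it. (a) Refining the count in Lemma \ref{lem:sn nilpotent} into a lattice-point count with an $O(1/n)$ boundary layer cannot work as described: the normal-form map there is badly non-injective, which is why Bass's method only produces two-sided bounds with \emph{different} constants --- the discrepancy is of the same order $n^{d(G)}$ as the main term, not a boundary-layer error. Moreover $V^n$ is not asymptotically a box in canonical coordinates; its rescaled limit is the Carnot--Carath\'eodory ball, and producing \emph{any} power-saving rate for this convergence is precisely the deep sub-Finsler theorem of Breuillard--Le Donne, not an elementary discrepancy estimate. Your clause ``once the over-counting coming from the group relations is shown to be of lower order'' defers the entire difficulty, and no soft counting argument can supply it (for some nilpotent groups the growth series is even transcendental, so no exact polytopal count exists); for non-discrete $G$ the counting picture does not even make sense without a further reduction. (b) Leaning on the doubling estimates behind Lemma \ref{lem:dyadic} to control all $n$ uniformly is circular: those estimates are intrinsically of the form ``in each dyadic block \emph{there exists} a good radius'', which is all doubling can give --- this is exactly why the paper's Theorem \ref{cor:individual balls}(1), which assumes only doubling, must pass to a lacunary subsequence $(r_k)$, whereas part (2) gets all $n$ only by invoking the present lemma. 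Upgrading ``some radii'' to ``all radii'' with a polynomial rate is not a doubling phenomenon; it is the content of the second assertion itself, and Tessera's proof of it uses the group structure and quantitative volume asymptotics, not just his covering/annulus lemma. So the second assertion remains unproved in your proposal; the honest fix is either to cite \cite{tessera07polygrowth} as the paper does, or to import the Breuillard--Le Donne error term and then run your (correct) telescoping step.
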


\begin{thm}
	\label{cor:individual balls}
	Fix $1< p <\infty $. Let $\alpha =(\alpha _{s})_{s\in G}$ be an action
	on $L_{p}(\mathcal{M})$ which satisfies
	\emph{$(\mathbf{A} ^{p}_{1})$--$(\mathbf{A} ^{p}_{3})$}.
	\begin{enumerate}
		\item[(1)] Assume that there exists an invariant metric $d$ on $G$ and
		that $(G,d)$ satisfies \eqref{eq:doubling def} and
		\eqref{eq:asymp inv}. Denote
		\begin{equation*}
		A_{r}x=
		\frac{1}{m(B_{r})}
		\int _{B_{r}}\alpha _{s}x\,dm(s),\quad x\in
		L_{p}( \mathcal{M}), r>0.
		\end{equation*}
		Then there exists a lacunary sequence $(r_{k})_{k\geq 1}$ with
		$2^{k}\leq r_{k} <2^{k+1}$ such that $(A_{r_{k}}x)_{k\geq 1}$ converges
		b.a.u. to $Px$ for all $x\in L_{p}(\mathcal{M})$. If additionally
		$p\geq 2$, then $(A_{r_{k}}x)_{k\geq 1}$ converges a.u. to $Px$ for all
		$x\in L_{p}(\mathcal{M})$.
		\item[(2)] Assume that $G$ is a locally compact group of polynomial growth,
		generated by a symmetric compact subset~$V$. Then the sequence
		\begin{equation*}
		A_{n}x=
		\frac{1}{m(V^{n})}
		\int _{V^{n}}\alpha _{s}x\,dm(s),\quad n\in \mathbb{N},
		\end{equation*}
		converges b.a.u. to $Px$ for all $x\in L_{p}(\mathcal{M})$.
		\item[(3)] Assume that $G$ is an increasing union of compact subgroups
		$(G_{n})_{n\geq 1}$. Then the sequence
		\begin{equation*}
		A_{n}x=
		\frac{1}{m(G_{n})}
		\int _{G_{n}}\alpha _{s}x\,dm(s),\quad n\in \mathbb{N},
		\end{equation*}
		converges a.u. to $Px$ for all $x\in L_{p}(\mathcal{M})$.
	\end{enumerate}
\end{thm}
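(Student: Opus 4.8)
The plan is to run, for each of the three parts, the two--step scheme already used in Proposition \ref{prop:individual}: first reduce to a dense subspace by a maximal/Banach--principle argument, then verify the convergence there. Each family $(A_{n})$ in question is of strong type $(p,p)$ (Theorem \ref{thm:max doubling} for (1) and (2), Theorem \ref{thm:max subgp} for (3)), hence of weak type $(p,p)$. Thus by Lemma \ref{lem:principle litvinov} the set of $x$ for which $(A_{n}x)$ converges a.u. is closed, while the strong type bound makes $x\mapsto(A_{n}(x-Px))_{n}$ bounded from $L_{p}(\mathcal M)$ into $L_{p}(\mathcal M;\ell_\infty)$, inside which $L_{p}(\mathcal M;c_{0})$ and $L_{p}(\mathcal M;c_{0}^{c})$ are closed. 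By either mechanism, and using the splitting \eqref{eq:decomposition erg} (on $\mathcal F_{p}$ every $A_{n}$ is the identity, so there $A_{n}x=x=Px$), it suffices to prove the stated convergence on the dense subspace $S=\mathrm{span}\{y-\alpha_{g}y:g\in G,\ y\in L_{p}(\mathcal M)\}$. On $S$ I would reduce by linearity to a single generator $x=y-\alpha_{g_{0}}y$, and, splitting $y$ into the positive and negative parts of its real and imaginary components, to self--adjoint $y$.

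For part (3) the dense step is immediate and yields a.u.: if $g_{0}\in G_{N}$ then for every $n\ge N$ one has $G_{n}g_{0}=G_{n}$, and since the Haar measure of the compact group $G_{n}$ is bi--invariant, $A_{n}(\alpha_{g_{0}}y)=A_{n}y$, so that $A_{n}x=0$ for all $n\ge N$. Hence $(A_{n}x)_{n}$ is eventually $0$, a fortiori in $L_{p}(\mathcal M;c_{0}^{c})$, so it converges a.u., and Lemma \ref{lem:principle litvinov} propagates a.u. convergence to all of $L_{p}(\mathcal M)$.

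For part (1) the key computation is $A_{r}(y-\alpha_{g_{0}}y)=\frac{1}{m(B_{r})}\bigl(\int_{B_{r}}-\int_{B_{r}g_{0}}\bigr)\alpha_{g}y\,dm(g)$, giving $\|A_{r}x\|_{p}\le\theta_{r}\sup_{g}\|\alpha_{g}\|\,\|y\|_{p}$ with $\theta_{r}=m(B_{r}\bigtriangleup B_{r}g_{0})/m(B_{r})$; since $B_{r}\bigtriangleup B_{r}g_{0}\subset B_{r+s}\setminus B_{r-s}$ for $s=d(e,g_{0})$, everything hinges on choosing radii for which these symmetric--difference shells are thin. Applying Lemma \ref{lem:dyadic} at each dyadic scale I would select a lacunary sequence $r_{k}\in[2^{k},2^{k+1})$ with $\theta_{r_{k}}\le C s\,2^{-k}$; to get a single sequence working for all $g_{0}$ simultaneously I would run a diagonal argument over a countable dense subset of $G$, arranging at scale $k$ that all shells of width up to some $L_{k}\uparrow\infty$ with $\sum_{k}L_{k}2^{-k}<\infty$ are thin, which forces $\sum_{k}\theta_{r_{k}}<\infty$ for each fixed $g_{0}$. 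Then $\sum_{k}\|A_{r_{k}}x\|_{p}<\infty$ yields $(A_{r_{k}}x)_{k}\in L_{p}(\mathcal M;c_{0})$ by the triangle inequality in $L_{p}(\mathcal M;\ell_\infty)$, hence b.a.u. convergence by Lemma \ref{lem:czero}; and when $p\ge2$ the sharper bound $\|(A_{r_{k}}x)_{k}\|_{L_{p}(\mathcal M;\ell_\infty^{c})}\le\bigl(\sum_{k}\|A_{r_{k}}x\|_{p}^{2}\bigr)^{1/2}<\infty$ (via $\ell_{2}^{c}\hookrightarrow\ell_\infty^{c}$ and $\|\sum_{k}z_{k}^{*}z_{k}\|_{p/2}\le\sum_{k}\|z_{k}\|_{p}^{2}$) gives $c_{0}^{c}$, hence a.u. convergence. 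Propagation to all of $L_{p}(\mathcal M)$ is then via the framework above.

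Part (2) I would deduce from part (1) applied to the word metric, whose balls are the $V^{n}$ and which satisfies \eqref{eq:doubling def}--\eqref{eq:asymp inv}; this provides a lacunary subsequence $(A_{n_{k}}x)$, $n_{k}\in[2^{k},2^{k+1})$, converging b.a.u. to $Px$, and the remaining task is to fill the dyadic gaps, i.e. to show $(A_{n}x)_{n}\in L_{p}(\mathcal M;c_{0})$ for the full integer sequence on $S$. Here Lemma \ref{lem:tessera poly growth} gives the increment identity $A_{n+1}x-A_{n}x=\frac{m(V^{n+1}\setminus V^{n})}{m(V^{n+1})}(\widetilde A_{n}x-A_{n}x)$, with $\frac{m(V^{n+1}\setminus V^{n})}{m(V^{n+1})}\le Cn^{-\delta}$ and $\widetilde A_{n}$ the shell average, while Theorem \ref{thm:hl max } controls ${\sup_{n\in[2^{l},2^{l+1})}}^{+}A_{n}|x|$. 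I expect this gap--filling to be the main obstacle: the naive estimate $\big\|{\sup_{n\in[2^{l},2^{l+1})}}^{+}|A_{n}x-A_{n_{l}}x|\big\|_{p}\le\sum_{n\in[2^{l},2^{l+1})}\|A_{n+1}x-A_{n}x\|_{p}\lesssim 2^{l(1-\delta)}$ is summable over $l$ only when $\delta\ge1$, so for small $\delta$ pure norm summation fails and one must exploit the cancellation in the telescoping sum. The plan is to control the within--block oscillation by a maximal/variational inequality combining the shell decay $n^{-\delta}$ with the maximal operator at shifted scales, thereby reducing it to finitely many Hardy--Littlewood maximal functions whose block tails do decay; this is the technical heart of part (2).
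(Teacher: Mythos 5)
Parts (1) and (3) of your proposal are correct and essentially coincide with the paper's own argument: part (3) is verbatim the paper's proof, and part (1) differs only cosmetically (your diagonal argument over a countable dense subset of $G$ is unnecessary --- it suffices to let the shell widths grow with the scale, as the paper does with width $(3/2)^{k}$ at scale $k$, since any fixed $g_{0}$ is eventually caught by every sufficiently wide shell; your condition $L_{k}\uparrow\infty$, $\sum_{k}L_{k}2^{-k}<\infty$ achieves the same thing).

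The genuine gap is in part (2), and you have diagnosed it yourself without curing it: the ``gap-filling'' step is never carried out, and no variational or oscillation inequality of the kind you hope for is available in the paper or its references. Moreover, the route through the dyadic subsequence is structurally a dead end, because the paper's actual mechanism for passing from a subsequence to the full sequence is a positivity sandwich, and that sandwich requires the ratios of consecutive subsequence ball measures to tend to $1$ --- which fails for a lacunary sequence, where $m(V^{2^{k+1}})/m(V^{2^{k}})\to 2^{d(G)}\neq 1$. The paper instead sets $\delta'=[\delta^{-1}]+1$ and works along the polynomially spaced subsequence $n=k^{\delta'}$. This choice does two jobs at once. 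First, by Lemma \ref{lem:tessera poly growth} the shell estimate becomes $\|A^{1}_{k^{\delta'}}y\|_{p}\leq C|g_{0}|/k$, which is $\ell_{p}$-summable since $p>1$; note that here your $\ell_{1}$ triangle-inequality mechanism from part (1) would fail (the harmonic series diverges), so one must use the positivity domination $A^{1}_{r_{j}}y\leq\bigl[\sum_{i}(A^{1}_{r_{i}}y)^{p}\bigr]^{1/p}$ (operator monotonicity of $t\mapsto t^{1/p}$), whose $L_{p}$-norm is the $\ell_{p}$-sum $\bigl[\sum_{i}\|A^{1}_{r_{i}}y\|_{p}^{p}\bigr]^{1/p}$. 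This gives b.a.u.\ convergence of $(A_{n^{\delta'}}x)_{n}$ exactly as in part (1). Second --- and this is the idea you are missing --- since $m(V^{n})\sim cn^{d(G)}$, one has $m(V^{(n+1)^{\delta'}})/m(V^{n^{\delta'}})\to1$, so for $x\geq0$, nestedness of the sets $V^{n}$ and positivity of the maps $\alpha_{g}$ give, with $n(k)^{\delta'}\leq k<(n(k)+1)^{\delta'}$,
\[
\frac{m(V^{n(k)^{\delta'}})}{m(V^{k})}\,A_{n(k)^{\delta'}}x\;\leq\;A_{k}x\;\leq\;\frac{m(V^{(n(k)+1)^{\delta'}})}{m(V^{k})}\,A_{(n(k)+1)^{\delta'}}x,
\]
where both scalar factors tend to $1$. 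Since $0\leq a\leq b$ implies $\|eae\|_{\infty}\leq\|ebe\|_{\infty}$, b.a.u.\ convergence of the full sequence $(A_{k}x)_{k}$ follows directly from the definition, with no oscillation estimate at all. So the correct repair of your part (2) is not to seek a variational inequality within dyadic blocks, but to re-choose the subsequence so that the sandwich applies.
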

\begin{proof}
	(1) By Lemma~\ref{lem:dyadic}, there exists $(r_{i})_{i\geq 1}$ such that
	$2^{i}\leq r_{i} \leq 2^{i+1}$ and such that
	\begin{equation*}
	m(B_{r_{i}}\setminus B_{r_{i}-(3/2)^{i}})\leq C(3/2)^{i}
	m(B_{r_{i}}) / r_{i}.
	\end{equation*}
	That is to say,
	\begin{equation}
	\label{eq:dydaic diff}
	\frac{m(B_{r_{i}}\setminus B_{r_{i}-(3/2)^{i}})}{m(B_{r_{i}})} \leq C (3/4)^{i}.
	\end{equation}
	
	We show that $(A_{r_{i}}x)_{i\geq 1}$ converges b.a.u. to $Px$. By
	\eqref{eq:decomposition erg} and Lemma~\ref{lem:czero}, it suffices to
	show that
	$(A_{r_{i}}x)_{i\geq 1}\in L_{p}(\mathcal{M};c_{0} )$ for
	$x\in \mathcal{F} _{p}^{\bot }$. By Theorem~\ref{thm:max doubling}, it is
	enough to consider the case where $x=y-\alpha _{s_{0}} y$ with
	$y\in L_{p}^{+}(\mathcal{M})$ and $s_{0}\in G$. Indeed, if
	$(A_{r_{i}}x)_{i\geq 1}\in L_{p}(\mathcal{M};c_{0} )$ for
	$x$'s of the aforementioned form $y-\alpha _{s_{0}} y$, then the same holds
	for all
	$x\in \operatorname{span} \{x-\alpha _{s}x:s\in G,x\in L_{p}(
	\mathcal{M})\}$. Now by the definition of
	$\mathcal{F} _{p}^{\bot }$, for any $\varepsilon >0$ and
	$x\in \mathcal{F} _{p}^{\bot }$, we may find an element
	$y\in \operatorname{span} \{x-\alpha _{s}x:s\in G,x\in L_{p}(
	\mathcal{M})\}$ with $ \llVert  x-y \rrVert  _{p}<\varepsilon $. So, by Theorem~\ref{thm:max doubling}, we see that
	\begin{equation*}
	\bigl\llVert (A_{r_{i}}x - A_{r_{i}}y) \bigr\rrVert
	_{L_{p}(\mathcal{M};\ell _{\infty })} \leq C \llVert x-y \rrVert _{p} <C\varepsilon
	\end{equation*}
	with a constant $C$ independent of $x$ and~$y$. Thus, $(A_{r_{i}}x)$ belongs
	to $L_{p}(\mathcal{M};c_{0} )$ since
	$L_{p}(\mathcal{M};c_{0} )$ is a Banach space. In the following
	we prove the claim
	$(A_{r_{i}}x)_{i\geq 1}\in L_{p}(\mathcal{M};c_{0} )$ with
	$x=y-\alpha _{s_{0}} y$. Denote $ \llvert  s_{0} \rrvert  =d(e,s_{0})$. Note that
	\begin{equation*}
	A_{r} x =A_{r}^{1} y
	-A_{r}^{2} y,
	\end{equation*}
	where
	\begin{align*}
	A_{r}^{1} y&=
	\frac{1}{m(B_{r})}
	\int _{B_{r}\setminus (B_{r}\cap B_{r} s_{0})} \alpha _{s} y \,dm(s),\quad\quad
	\\A_{r}^{2} y&=
	\frac{1}{m(B_{r})}
	\int _{(B_{r} s_{0})
		\setminus (B_{r}\cap B_{r} s_{0})}\alpha _{s} y \,dm(s).
	\end{align*}
	By \eqref{eq:dydaic diff} we have for $i$ so that
	$(3/2)^{i}\geq  \llvert  s_{0} \rrvert  $,
	\begin{equation*}
	\llVert A_{r_{i}}^{1} y \rrVert _{p}\leq
	\frac{m(B_{r_{i}}\setminus B_{r_{i}- \llvert  s_{0} \rrvert  })}{m(B_{r_{i}})} \llVert y \rrVert _{p} \leq C (3/4)^{i}
	\llVert y \rrVert _{p}.
	\end{equation*}
	On the other hand, for any $m\leq j\leq n$,
	\begin{equation*}
	A_{r_{j}}^{1} y = \bigl[(A_{r_{j}}^{1}
	y )^{p} \bigr]^{1/p} \leq \Bigl[
	\sum _{m
		\leq i\leq n} (A_{r_{i}}^{1} y
	)^{p} \Bigr]^{1/p},
	\end{equation*}
	and by the previous argument
	\begin{equation*}
	\Bigl\llVert \Bigl[
	\sum _{m\leq i\leq n} A_{r_{i}}^{1} y
	^{p} \Bigr]^{1/p} \Bigr\rrVert _{p}
	\leq \Bigl[
	\sum _{m\leq i\leq n} \llVert A_{r_{i}}^{1} y
	\rrVert _{p} ^{p} \Bigr]^{1/p}\leq
	C(3/4)^{m} \llVert y \rrVert _{p}.
	\end{equation*}
	Hence,
	$ \llVert  (A_{r_{j}}^{1} y)_{m\leq j\leq n} \rrVert  _{L_{p}(\mathcal{M};c_{0}
		)}$ tends to $0$ as $m,n\to \infty $. Similarly,
	$(A_{r_{i}}^{2} y)_{i\geq 1}$ converges in the same manner. Therefore,
	$(A_{r_{i}}x)_{i\geq 1}\in L_{p}(\mathcal{M};c_{0} )$, as desired.
	
	Moreover, if $p \geq 2$, then
	\begin{equation*}
	(A_{r_{j}}^{1} y )^{2} =
	\bigl[(A_{r_{j}}^{1} y )^{p}
	\bigr]^{2/p} \leq \Bigl[
	\sum _{m\leq i\leq n} (A_{r_{i}}^{1} y
	)^{p} \Bigr]^{2/p},
	\end{equation*}
	and hence we can find contractions
	$u_{j}\in L_{\infty }(\mathcal{M}) $ such that for $m$ large enough,
	\begin{equation*}
	A_{r_{j}}^{1} y =u_{j} \Bigl[
	\sum _{m\leq i\leq n} (A_{r_{i}}^{1} y
	)^{p} \Bigr]^{1/p}\quad \text{with } \Bigl
	\llVert \Bigl[
	\sum _{m\leq i\leq n} (A_{r_{i}}^{1} y
	)^{p} \Bigr]^{1/p} \Bigr\rrVert
	_{p}\leq C (3/4)^{m} \llVert y \rrVert
	_{p}.
	\end{equation*}
	Therefore,
	$ \llVert  (A_{r_{j}}^{1} y)_{m\leq j\leq n} \rrVert  _{L_{p}(\mathcal{M};c_{0}^{c})}$
	tends to $0$ as $m,n\to \infty $, and $(A_{r_{i}}^{1} y)_{i\geq 1}$ converges
	a.u. to $0$ according to Lemma~\ref{lem:czero}. Similarly,
	$(A_{r_{i}}^{2} y)_{i\geq 1}$ converges in the same manner. Thus, we obtain
	that $(A_{r_{i}}x)_{i\geq 1}$ converges a.u. to~$0$. Then by
	\eqref{eq:decomposition erg} and Lemma~\ref{lem:principle litvinov},
	$(A_{r_{i}}x)_{i\geq 1}$ converges a.u. for all
	$x\in L_{p}(\mathcal{M})$.
	
	(2) We keep the notation $x$, $y$, $s_{0}$ in (1), and denote as before
	\begin{equation*}
	A_{k}^{1} y=
	\frac{1}{m(V^{k})}
	\int _{(V^{k})\setminus (V^{k}\cap V^{k}
		s_{0})}\alpha _{s} y \,dm(s) ,\quad y\in
	L_{p}(\mathcal{M}),k \geq 1.
	\end{equation*}
	Write $\delta '=[\delta ^{-1}]+1$. Note that by Lemma~\ref{lem:tessera poly growth}, there exists a constant $C>0$ such that
	for $y\in L_{p}(\mathcal{M})$ and $k\geq 1$,
	\begin{equation}
	\label{eq:ndelta} \llVert A_{k^{\delta '}}^{1} y \rrVert
	_{p} \leq
	\frac{m(V^{k^{\delta '}}\setminus V^{k^{\delta '}- \llvert  s_{0} \rrvert  })}{m(V^{k^{\delta '}})} \llVert y \rrVert _{p}\leq
	\frac{C \llvert  s_{0} \rrvert   \llVert  y \rrVert  _{p}}{k},
	\end{equation}
	where $ \llvert  s_{0} \rrvert  =d(e,s_{0})$ and $d$ refers to the word metric defined in
	Example~\ref{ex:max doubling}(1). Hence,
	\begin{equation*}
	\Bigl\llVert \Bigl[
	\sum _{m\leq k\leq n} A_{k^{\delta '}}^{1} y
	^{p} \Bigr]^{1/p} \Bigr\rrVert _{p}
	\leq \Bigl[
	\sum _{m\leq k\leq n} \llVert A_{k^{\delta '}}^{1} y
	\rrVert _{p} ^{p} \Bigr]^{1/p}\leq C
	\llvert s_{0} \rrvert \Bigl(
	\sum _{m\leq k\leq n}
	\frac{1}{k^{p}} \Bigr)^{1/p} \llVert y \rrVert _{p}.
	\end{equation*}
	Then, by an argument similar to that in (1), we see that
	$(A_{n^{\delta '}} x)_{n\geq 1}$ converges b.a.u. to $Px$ for all
	$x\in L_{p}(\mathcal{M})$, and if $p\geq 2$, then
	$(A_{n^{\delta '}} x)_{n\geq 1}$ converges a.u. to $Px$.
	
	For the general case, we consider
	$x\in L_{p}^{+}(\mathcal{M})$. For each $k$, let $n(k)$ be the
	number such that $n(k)^{\delta '}\leq k< (n(k)+1)^{\delta '}$. Then
	\begin{equation*}
	\frac{m(V^{n(k)^{\delta '}})}{m(V^{k})} A_{n(k)^{\delta '}} x \leq A_{k} x\leq
	\frac{m(V^{(n(k)+1)^{\delta '}})}{m(V^{k})} A_{(n(k)+1)^{
			\delta '}} x.
	\end{equation*}
	Also note that according to Lemma~\ref{lem:tessera poly growth},
	$m(V^{n(k)^{\delta '}})/m(V^{k})$ tends to~$1$. Therefore, it is easy to
	see from the definition of b.a.u. convergence that $A_{k} x$ converges
	b.a.u. to $Px$.\vadjust{\goodbreak}
	
	(3) Note that for $x=y-\alpha _{s_{0}} y$ with
	$y\in L_{p}(\mathcal{M})$ and $s_{0}\in G$, we have for
	$n$ large enough so that $G_{n}\ni s_{0}$,
	\begin{equation*}
	A_{n}x=
	\frac{1}{m(G_{n})}
	\int _{G_{n}}\alpha _{s} y \,dm(s)-
	\int _{G_{n}} \alpha _{s s_{0}}y\,dm(s)=0.
	\end{equation*}
	That is to say, $A_{n} x$ converges a.u. to $0$ as $n\to \infty $. Then
	by \eqref{eq:decomposition erg} and Lemma~\ref{lem:principle litvinov}, we see that $A_{n} x$ converges a.u. for
	all $x\in L_{p}(\mathcal{M})$.
\end{proof}

In particular, the above arguments give the individual ergodic theorem
for positive invertible operators on $L_{p}$-spaces.
\begin{cor}
	Let $1<p<\infty $. Let $T:L_{p}(\mathcal{M}) \to L_{p}(\mathcal{M})$ be
	a positive invertible operator with positive inverse such that
	$\sup _{n\in \mathbb{Z}} \llVert  T^{n} \rrVert  <\infty $. Denote
	\begin{equation*}
	A_{n}=
	\frac{1}{2n+1}
	\sum _{k=-n}^{n}T^{k},\quad
	n\in \mathbb{N}.
	\end{equation*}
	Then $(A_{n}x)_{n\geq 1}$ converges b.a.u. to $Px$ for all
	$x\in L_{p}(\mathcal{M})$. If additionally $p \geq 2$, then
	$(A_{n}x)_{n\geq 1}$ converges a.u. to $Px$.
\end{cor}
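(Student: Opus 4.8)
The plan is to recast the powers of $T$ as a group action and feed it into the machinery of this section. First I would set $\alpha_n=T^n$ for $n\in\mathbb{Z}$, which defines an action of $G=\mathbb{Z}$ on the single space $L_p(\mathcal{M})$: condition $(\mathbf{A}^p_1)$ is automatic because $\mathbb{Z}$ is discrete, $(\mathbf{A}^p_2)$ is exactly the hypothesis $\sup_{n\in\mathbb{Z}}\|T^n\|<\infty$, and $(\mathbf{A}^p_3)$ holds since $T$ and $T^{-1}$ are positive and hence every $T^n$ is positive. Equipping $\mathbb{Z}$ with the word metric attached to the symmetric generating set $V=\{-1,0,1\}$, one has $V^n=\{-n,\dots,n\}$, the counting (Haar) measure gives $m(V^n)=2n+1$, and the operators $A_n$ of the statement are precisely the ball averages $\frac{1}{m(V^n)}\int_{V^n}\alpha_g x\,dm(g)$. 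Since $\mathbb{Z}$ is of polynomial growth and its word metric satisfies the doubling condition and asymptotic invariance (Example \ref{ex:max doubling}(1)), the whole framework applies, with $P$ the mean ergodic projection onto $\mathcal{F}_p=\{x:Tx=x\}$. With this identification the b.a.u.\ convergence of $(A_nx)_{n\ge1}$ to $Px$ for all $x\in L_p(\mathcal{M})$ is immediate from Theorem \ref{cor:individual balls}(2).

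For the a.u.\ convergence when $p\ge2$ I would argue directly, since the sandwiching step in Theorem \ref{cor:individual balls}(2) only produces bilateral convergence. By the decomposition \eqref{eq:decomposition erg}, Lemma \ref{lem:czero} and Lemma \ref{lem:principle litvinov} (the latter applicable thanks to the strong type $(p,p)$ inequality of Example \ref{ex:max doubling}(1)(ii)), it suffices to prove that $(A_n x)_{n\ge1}\in L_p(\mathcal{M};c_0^c)$ for each generator $x=y-T^{g_0}y$ with $y\in L_p^+(\mathcal{M})$ and $g_0\in\mathbb{Z}$. Splitting $A_n x=A_n^1 y-A_n^2 y$ into the positive boundary averages over $V^n\setminus(V^n\cap V^n g_0)$ and $(V^n g_0)\setminus(V^n\cap V^n g_0)$, and observing that each boundary set has only $|g_0|$ points, one obtains the key decay estimate
\[
\|A_n^1 y\|_p,\ \|A_n^2 y\|_p\ \le\ \frac{|g_0|\sup_{k\in\mathbb{Z}}\|T^k\|}{2n+1}\,\|y\|_p\ \le\ \frac{C|g_0|}{n}\,\|y\|_p .
\]
Since $p>1$, the rate $1/n$ is $p$-summable, which is the decisive feature missing in the general doubling setting. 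Using the operator monotonicity of $t\mapsto t^{2/p}$ for $p\ge2$ one gets $(A_j^1 y)^2\le\big[\sum_{m\le i\le n}(A_i^1 y)^p\big]^{2/p}$, so Douglas' lemma furnishes contractions $u_j$ with $A_j^1 y=u_j\big[\sum_{m\le i\le n}(A_i^1 y)^p\big]^{1/p}$; as $\big(\sum_{i\ge m}\|A_i^1 y\|_p^p\big)^{1/p}\to0$, the tail norms $\|(A_j^1 y)_{j\ge m}\|_{L_p(\mathcal{M};c_0^c)}$ vanish, giving $(A_n^1 y)_n\in L_p(\mathcal{M};c_0^c)$, and likewise for $(A_n^2 y)_n$. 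Hence $(A_n x)_n$ converges a.u.\ to $0$ on the span $S$ of such generators, which is dense in $\mathcal{F}_p^{\perp}$; combined with the trivial convergence on $\mathcal{F}_p$ and the closedness of the a.u.-convergence set (Lemma \ref{lem:principle litvinov}), this extends a.u.\ convergence to all of $L_p(\mathcal{M})$ with limit $Px$.

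The hard part is precisely this column ($c_0^c$) convergence for the full sequence when $p\ge2$. The reason the direct argument closes is structural to $\mathbb{Z}$: the boundary of $V^n$ stays of size $\sim|g_0|$ while $m(V^n)\sim n$, so the relevant norms decay like $1/n$, which is the endpoint $\delta=1$ of Lemma \ref{lem:tessera poly growth} and is $p$-summable for every $p>1$. In a general group of polynomial growth one only has decay $n^{-\delta}$ with possibly small $\delta$, forcing a passage to a lacunary subsequence as in Theorem \ref{cor:individual balls}(1); here no such restriction is needed, which is what upgrades the a.u.\ statement from a subsequence to the entire sequence.
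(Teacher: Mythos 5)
Your proposal is correct and takes essentially the same route as the paper: the paper's proof simply invokes the proof of Theorem \ref{cor:individual balls}(2) for $G=\mathbb{Z}$, noting that one may take $\delta=1$ in Lemma \ref{lem:tessera poly growth} and $\delta'=1$ in \eqref{eq:ndelta}, which is precisely the argument you unpack --- the boundary decay $\|A_n^1 y\|_p\lesssim |g_0|\,\|y\|_p/n$ is $p$-summable for every $p>1$, so the $c_0$/$c_0^c$ column argument runs on the full sequence and no lacunary subsequence or sandwiching is needed. Your observation that the sandwich step in (2) only yields b.a.u.\ convergence, so the a.u.\ claim must come from applying the summability argument directly to the whole sequence, is exactly the content of the paper's remark that $\delta'=1$ suffices here.
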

\begin{proof}
	The assertion follows from the proof of Theorem~\ref{cor:individual balls}(2) for the case where $G$ equals the integer
	group $\mathbb{Z}$. It suffices to notice that in this case we may choose
	$\delta =1$ in Lemma~\ref{lem:tessera poly growth} and take
	$\delta '=1$ in \eqref{eq:ndelta}.
\end{proof}
\begin{rem}
	Note that the above result is not true for $p=1$, even for positive invertible
	isometries on classical $L_{1}$-spaces (see, e.g.,
	\cite{ionescutulcea64ergisometry}). So it is natural to assume that
	$p\neq 1$ in the above discussions.
\end{rem}

The following conjecture for mean bounded maps is still open. The result
for classical $L_{p}$-spaces is given by
\cite{martinreyesdelatorre88ergpowerbdd}.
\begin{conj}
	Let $1<p<\infty $. Let $T:L_{p}(\mathcal{M}) \to L_{p}(\mathcal{M})$ be
	a positive invertible operator with positive inverse such that
	$\sup _{n\in \mathbb{Z}} \llVert  \frac{1}{2n+1}\sum _{k=-n}^{n} T^{k} \rrVert  <
	\infty $. Denote
	\begin{equation*}
	A_{n}=
	\frac{1}{2n+1}
	\sum _{k=-n}^{n}T^{k},\quad
	n\in \mathbb{N}.
	\end{equation*}
	Then $(A_{n}x)_{n\geq 1}$ converges b.a.u. to $Px$ for all
	$x\in L_{p}(\mathcal{M})$. If additionally $p \geq 2$, then
	$(A_{n}x)_{n\geq 1}$ converges a.u. to $Px$.
\end{conj}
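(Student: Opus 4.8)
The plan is to run the same three-step scheme used for every individual theorem in this section: a strong type $(p,p)$ maximal inequality for $(A_n)_{n\geq1}$, convergence on a dense subset, and the noncommutative Banach principle of Lemma~\ref{lem:principle litvinov}. Since $L_p(\mathcal M)$ is reflexive for $1<p<\infty$, the hypothesis $\sup_n\bigl\|\frac1{2n+1}\sum_{k=-n}^nT^k\bigr\|<\infty$ makes $T$ mean ergodic, so the mean ergodic theorem supplies the splitting \eqref{eq:decomposition erg} with $\mathcal F_p=\{x:Tx=x\}$, the bounded projection $P$ onto $\mathcal F_p$, and norm convergence $A_nx\to Px$. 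On $\mathcal F_p$ the averages are constant, while on a coboundary $x=y-Ty$ the sum telescopes to $A_nx=\frac1{2n+1}\bigl(T^{-n}y-T^{n+1}y\bigr)$; the mean ergodicity of $T$ and $T^{-1}$ forces $\frac1n\|T^{\pm n}y\|_p\to0$, whence $A_nx\to0$ in norm. I would then upgrade these norm statements to b.a.u.\ (and, for $p\geq2$, a.u.) convergence on the dense set $\mathcal F_p\oplus\operatorname{span}\{y-Ty\}$ by showing that the corresponding sequences lie in $L_p(\mathcal M;c_0)$ (resp.\ $L_p(\mathcal M;c_0^c)$) and invoking Lemma~\ref{lem:czero}, as in the proof of Theorem~\ref{cor:individual balls}, before finishing with Lemma~\ref{lem:principle litvinov}. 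Everything then hinges on the maximal inequality.

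To obtain it one would imitate the commutative proof of Mart\'in-Reyes and de la Torre \cite{martinreyesdelatorre88ergpowerbdd}. Classically, a positive invertible operator with positive inverse on $L_p$ is, for $p\neq2$, a weighted composition (Lamperti) operator $Tf=w\cdot(f\circ\phi)$, and the Ces\`aro-boundedness of $T$ is equivalent to a Muckenhoupt $A_p$ condition on the cocycle weights along the orbits of $\phi$. One transfers $A_n$ to the integer shift and applies the weighted ($A_p$) Hardy--Littlewood maximal theorem to get the bound. This is strictly harder than the power-bounded case of the preceding corollary, where $\sup_n\|T^n\|<\infty$ lets one dominate $A_n$ by the averages of a single Markov operator as in Proposition~\ref{prop:walk domination} and Theorem~\ref{cor:individual balls}(2).

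The main obstacle, and the reason the statement is only a conjecture, is that these tools are unavailable noncommutatively. There is at present no usable theory of noncommutative $A_p$ weights, and hence no weighted maximal inequality to transfer; moreover, although positive \emph{isometries} of noncommutative $L_p$ admit a Lamperti--Yeadon normal form, no such structure is known for merely Ces\`aro-bounded positive maps, so even the reduction to a weighted shift is missing. Nor can one route through the Dunford--Schwartz theory: a mean-bounded operator on a single $L_p$ need not extend to $L_1+\mathcal M$, so the Junge--Xu inequality \cite{jungexu07erg} does not apply. Concretely, the domination of Proposition~\ref{prop:walk domination} collapses here, since under mean-boundedness $\|T^n\|$ may grow like $O(n)$, so $A_n$ cannot be controlled by the averages of one positive contraction, and the transference and doubling machinery of Sections~\ref{sect:transference}--\ref{sect:max} does not apply. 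Thus the crux is to prove a noncommutative weighted maximal ergodic inequality for a mean-bounded positive invertible $T$; once such an inequality were available, the dense-subset convergence above together with Lemma~\ref{lem:principle litvinov} would finish the proof.
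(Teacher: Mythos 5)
You have correctly identified the essential point: the paper does \emph{not} prove this statement. It is stated explicitly as an open conjecture (``The following conjecture for mean bounded maps is still open''), with the classical $L_p$ result credited to Mart\'in-Reyes and de la Torre \cite{martinreyesdelatorre88ergpowerbdd}. There is therefore no proof in the paper to compare against, and the right answer here was precisely what you did: refrain from manufacturing an argument and instead diagnose the obstruction. Your diagnosis is consistent with the paper's own framing. In particular, you are right that the mean-boundedness hypothesis only yields $\|T^{\pm n}\|=O(n)$, so $T$ need not extend to $L_1(\mathcal M)+\mathcal M$ and the Junge--Xu machinery \cite{jungexu07erg} is unavailable; that the domination by powers of a single positive contraction (Proposition~\ref{prop:walk domination}), which drives the corollary preceding the conjecture, collapses without power-boundedness; and that the classical proof rests on a Lamperti normal form plus an $A_p$-weight characterization of Ces\`aro-boundedness, neither of which has a noncommutative counterpart. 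Identifying the missing ingredient as a noncommutative weighted maximal ergodic inequality is a reasonable reading of why the authors left this open.

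One technical caveat in your sketch of the ``easy'' part: Ces\`aro-boundedness on a reflexive space does not by itself give mean ergodicity, and your claim that it forces $\frac1n\|T^{\pm n}y\|_p\to0$ does not follow. From $\sup_n\|A_n\|<\infty$ and positivity one gets only $\|T^{\pm n}\|=O(n)$, whence the telescoped coboundary average $A_n(y-Ty)=\frac{1}{2n+1}(T^{-n}y-T^{n+1}y)$ is merely $O(1)$ in norm, not $o(1)$. So even the norm convergence and the dense-subset step are not free in this setting (classically they are recovered through the weighted maximal estimates themselves). This does not affect your conclusion, since you correctly locate the crux elsewhere, but it means the conjecture is open already at the level of the splitting \eqref{eq:decomposition erg} and the dense-subset convergence, not only at the level of the maximal inequality.
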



\subsection*{Acknowledgment}
	The subject of this paper came from a suggestion made several years ago
	to the authors by Professor Quanhua Xu, whom we thank for many fruitful
	discussions. We would also like to thank the anonymous referees for their
	helpful comments and suggestions. Part of the work was done during the
	Summer Working Seminar on Noncommutative Analysis at Wuhan University (2015)
	and the Institute for Advanced Study in Mathematics at Harbin Institute
	of Technology (2016 and 2017).
	
	This work was supported by National Science
	Foundation (NSF) of China grant 11431011. Hong's work was partially supported
	by NSF of China grants 11601396 and 12071355. Liao's work was partially
	supported by the National Science Foundation and by Coalition for National
	Science Funding grant 11420101001. Wang's work was partially supported
	by the European Research Council Advanced Grant ``Non-Commutative Distributions
	in Free Probability'' No. 339760, Agence Nationale de la Recherche grant
	ANR-19-CE40-0002, and a public grant as part of the Fondation Math\'{e}matique
	Jacques Hadamard.

\newcommand{\etalchar}[1]{$^{#1}$}

\end{document}